\documentclass[11pt]{article}
\usepackage{amsthm, amsmath, amssymb, amsfonts, url, booktabs, tikz, setspace, fancyhdr, bm}
\usepackage{hyperref}
\usepackage{amsthm}
\usepackage{geometry}
\geometry{verbose,tmargin=2.1cm,bmargin=2.1cm,lmargin=2.3cm,rmargin=2.3cm}
\usepackage{cancel}
\usepackage{hyperref, enumerate}
\usepackage[shortlabels]{enumitem}
\usepackage[babel]{microtype}
\usepackage[english]{babel}
\usepackage[capitalise]{cleveref}
\usepackage{comment}
\usepackage{bbm}
\usepackage{csquotes}
\usepackage{mathabx}
\usepackage{tikz}
\usepackage{graphicx}
\usepackage{float}
\usepackage{xcolor}
\usetikzlibrary{positioning, arrows.meta, shapes.geometric}
\usepackage{booktabs}

\counterwithin{figure}{section}


\newtheorem{theorem}{Theorem}[section]
\newtheorem{prop}[theorem]{Proposition}
\newtheorem{lemma}[theorem]{Lemma}
\newtheorem{cor}[theorem]{Corollary}

\newtheorem{claim}[theorem]{Claim}

\definecolor{rosepink}{RGB}{255,102,204}
\definecolor{dateplum}{HTML}{993366}
\definecolor{darkdateplum}{RGB}{128,0,32}
\definecolor{lightdateplum}{RGB}{219,112,147}
\definecolor{darkred}{RGB}{139,0,0}
\definecolor{lightred}{RGB}{240,130,100}
\theoremstyle{definition}
\newtheorem{defn}[theorem]{Definition}
\newtheorem*{defn-non}{Definition}

\newtheorem{constr}[theorem]{Construction}

\newcommand{\Yemph}[1]{\textcolor{black}{\emph{#1}}}

\newtheorem{rmk}[theorem]{Remark}


\newlist{Case}{enumerate}{2}
\setlist[Case, 1]{%
    label           =   {\bfseries Case \arabic*.},
    labelindent=1em ,labelwidth=1.3cm, labelsep*=1em, leftmargin =!
}
\setlist[Case, 2]{%
    label           =   {\bfseries Subcase \arabic{Casei}.\arabic*.},
    labelindent=-1em ,labelwidth=1.3cm, labelsep*=1em, leftmargin =!
}

\newenvironment{poc}{\begin{proof}[Proof of claim]}{\end{proof}}

\newcommand{\C}[1]{{\protect\mathcal{#1}}}

\newcommand{\I}[1]{{\mathbbm #1}}

\newcommand{\M}[1]{\mathrm{#1}}

\usepackage{todonotes} 

\newcommand{\eps}{\varepsilon}

\newcommand{\blue}{\textcolor{blue}}

\newcommand{\match}{\nu_\mathrm{bi}}

\newcommand{\VC}{\mathrm{dim}_{\mathsf{VC}}}
\newcommand{\MIS}{\mathrm{MIS}}

\title{Beyond chromatic threshold via the $(p,q)$-theorem, and a sharp blow-up phenomenon} 
\author{
Hong Liu\thanks{Extremal Combinatorics and Probability Group (ECOPRO), Institute for Basic Science (IBS), Daejeon, South Korea. Emails: {\texttt \{hongliu, zixiangxu\}@ibs.re.kr}. Supported by IBS-R029-C4.}
\and
Chong Shangguan\thanks{Research Center for Mathematics and Interdisciplinary Sciences, Shandong University, Qingdao 266237, China, and Frontiers Science Center for Nonlinear Expectations, Ministry of Education, Qingdao 266237, China. Email: {\texttt theoreming@163.com}. Supported by National Natural Science Foundation of China under Grant Nos. 12101364 and 12231014, and Natural Science Foundation of Shandong Province under Grant No. ZR2021QA005.}
\and
Jozef Skokan\thanks{Department of Mathematics, London School of Economics, Houghton Street, London WC2A 2AE, UK.
Email: \texttt{j.skokan@lse.ac.uk}}
\and
Zixiang Xu\footnotemark[1]
}

\begin{document}

\maketitle
\begin{abstract}
We establish a novel connection between the well-known chromatic threshold problem in extremal combinatorics and the celebrated $(p,q)$-theorem in discrete geometry. In particular, for a graph $G$ with bounded clique number and a natural density condition, we prove a $(p,q)$-theorem for an abstract convexity space associated with $G$. Our result strengthens those of Thomassen and Nikiforov on the chromatic threshold of cliques. Our $(p,q)$-theorem can also be viewed as a $\chi$-boundedness result for (what we call) ultra maximal $K_r$-free graphs. 

We further show that the graphs under study are blow-ups of constant size graphs, improving a result of Oberkampf and Schacht on homomorphism threshold of cliques. Our result unravels the cause underpinning such a blow-up phenomenon, differentiating the chromatic and homomorphism threshold problems for cliques. It implies that for the homomorphism threshold problem, rather than the minimum degree condition usually considered in the literature, the decisive factor is a clique density condition on co-neighborhoods of vertices. More precisely, we show that if an $n$-vertex $K_{r}$-free graph $G$ satisfies that the common neighborhood of every pair of non-adjacent vertices induces a subgraph with $K_{r-2}$-density at least $\eps>0$, then $G$ must be a blow-up of some $K_r$-free graph $F$ on at most $2^{O(\frac{r}{\eps}\log\frac{1}{\eps})}$ vertices. Furthermore, this single exponential bound is optimal. We construct examples with no $K_r$-free homomorphic image of size smaller than $2^{\Omega_r(\frac{1}{\varepsilon})}$.
\end{abstract}

\section{Introduction}
\subsection{Overview}
Finding sufficient conditions guaranteeing a graph to have bounded complexity has long been a popular topic in combinatorics and theoretical computer science. There are many natural ways to measure complexity. In this paper, the invariant we are interested in is the chromatic number. Obviously the chromatic number of a graph is lower bounded by its clique number and, therefore, we focus on graphs with bounded clique number in this paper. On the other hand, there are plethora of graphs with bounded clique number and arbitrarily large chromatic number, such as Mycielski graphs, an important family of triangle-free graphs in structural graph theory, Kneser graphs, a well-studied family in topological and algebraic combinatorics, and, as one of the famous early applications of the probabilistic method in combinatorics, Erd\H{o}s~\cite{1959Erdos} in 1959 gave a random construction with arbitrarily large girth (hence without triangles) and chromatic number. 

A natural question which attracted a lot of attention is whether the chromatic number can be bounded if an additional condition is imposed. For example, 
Tur\'{a}n's theorem~\cite{1941Turan}, a fundamental result in extremal graph theory, states that every $n$-vertex $K_{r}$-free graph has average degree at most $\frac{r-2}{r-1}\cdot n$, and, moreover, the balanced complete $(r-1)$-partite graph, also known as the Tur\'{a}n graph $T_{n,r-1}$, is the unique extremal graph. 
Rephrasing Tur\'{a}n's theorem, we see that every $n$-vertex $K_{r}$-free graph $G$ with average degree at least $\frac{r-2}{r-1}\cdot n$ must be $T_{n,r-1}$ and hence $(r-1)$-colorable. However, the average degree condition is not an ideal condition to force bounded chromatic number due to small `local noise'. Indeed, if the average degree drops slightly, although by a stability theorem of Erd\H{o}s and Simonovits~\cite{1966ErdosStability,1966SimonovitsStability}, we know that $G$ is close to $T_{n,r-1}$ (in edit-distance), but $G$ could be e.g. a disjoint union of $T_{n-\sqrt{n},r-1}$ and an Erd\H{o}s's random graph on $\sqrt{n}$ vertices with large girth and chromatic number. Notice that here the vertices in the `local noise' part have $o(n)$ small degree. It is then perhaps not too surprising that the minimum degree condition is the correct one to impose: Andr\'{a}sfai, Erd\H{o}s, and S\'{o}s~\cite{1974ErdosSos} showed that every $n$-vertex $K_{r}$-free graph with minimum degree larger than $\frac{3r-7}{3r-4}\cdot n$ has chromatic number at most $r-1$. It remained an interesting question to determine the optimal min-degree condition guaranteeing bounded chromatic number. This is the by-now well-known 
\Yemph{chromatic threshold} problem first formulated in the early 1970s by Erd\H{o}s and Simonovits~\cite{1973ErdosSimonovits}, and since then there has been a large amount of work on this topic. Formally, for a graph $H$, its chromatic threshold is defined as
$$\delta_{\chi}(H):=\inf\big\{\alpha\ge 0:\exists~C=C(\alpha,H)\  \textup{s.t.~} \forall~n\textup{-vertex\ }H\textup{-free}~G,~\delta(G)\ge \alpha n\Rightarrow \chi(G)\le C \big\}.$$
In other words, for $\alpha<\delta_{\chi}(H)$, there exist $H$-free graphs with minimum degree $\alpha n$ and arbitrarily large chromatic number, but if $\alpha>\delta_{\chi}(H)$, then $\chi(G)$ must be bounded. 

For the first non-trivial case when $H$ is a triangle, a beautiful construction of Hajnal using the Kneser graph shows that $\delta_{\chi}(K_3)\ge \frac{1}{3}$ (see~\cite{1973ErdosSimonovits}); this example can be easily extended to obtain $\delta_{\chi}(K_r)\ge \frac{2r-5}{2r-3}$. It was not until 2002 that Thomassen~\cite{2002Thomassen} proved that Hajnal's construction is optimal: $\delta_{\chi}(K_3)= \frac{1}{3}$, as conjectured by Erd\H{o}s and Simonovits~\cite{1973ErdosSimonovits}. Later, the chromatic thresholds for all cliques $\delta_{\chi}(K_{r})=\frac{2r-5}{2r-3}$ were determined by Goddard and Lyle~\cite{2011JGTKrChromatic} and independently by Nikiforov~\cite{2010arxivKrfree}. After a series of results (see, e.g.,~\cite{1999CPCBrandt,2011Unpubilished,1997CPCChen,1982OddCycles,1995DMJin, 2010ColoringViaVCDim}), the culmination is the remarkable work of Allen, B\"{o}ttcher, Griffiths, Kohayakawa, and Morris~\cite{2013advAllChromatic}, which determined the chromatic thresholds for all graphs $H$. 

In this paper, we investigate a different density condition: a clique density condition in co-neighborhoods of non-adjacent vertices.
Below is our first result.

\begin{theorem}\label{thm:main0}
 Let $r\ge 3$, $\varepsilon>0$ and $G$ be an $n$-vertex $K_{r}$-free graph. If for every non-adjacent pair of vertices $u,v\in V(G)$, $G[N(u)\cap N(v)]$, the subgraph of $G$ induced on the co-neighborhood $N(u)\cap N(v)$ of $u$ and $v$, contains at least $\varepsilon n^{r-2}$ copies of $K_{r-2}$, then $\chi(G)=O_{r,\varepsilon}(1)$\footnote{Here we write $O_{r,\varepsilon}(1)$ for a constant that depends only on $r$ and $\varepsilon$.}. 
\end{theorem}

Our theorem implies and extends the results of Thomassen~\cite{2002Thomassen}, Goddard and Lyle~\cite{2011JGTKrChromatic}, and Nikiforov~\cite{2010arxivKrfree} as the above clique density condition in the co-neighborhoods is a strictly weaker condition than the minimum degree condition in $\delta_{\chi}(K_r)$; see the discussion in~\cref{sec:application}. Throughout the rest of this paper, we will refer the graphs in~\cref{thm:main0} as \Yemph{$\varepsilon$-ultra maximal $K_{r}$-free graphs}.

One of our main contributions is to study the chromatic threshold problem through a geometric perspective, establishing a surprising connection between this classical problem in extremal combinatorics and the celebrated $(p,q)$-theorem in discrete geometry. We will elaborate more on this in the next subsection.

\subsection{Chromatic threshold meets the $(p,q)$-theorem}
In 1921, Radon~\cite{1921Radon} proved his fundamental lemma in combinatorial convexity which states that any set of $d+2$ points in $\I R^d$ can be partitioned into two parts whose convex hulls intersect. Radon's lemma was introduced to prove Helly’s theorem~\cite{1923Helly}, yet another fundamental result in convexity. Helly's theorem is a local-global type result, stating that for any finite family of convex sets in $\I R^d$, if every $d+1$ of them intersect, then all of them intersect. Since then, many generalizations have been developed. We refer the readers to~\cite{2019Survey,2017BU,2019HellySurvey,1993HellySurvey,2017DMMinki,2024sigmaMinki}. One of the important extensions, due to Katchalski and Liu~\cite{1979PAMS}, is the fractional Helly theorem. It states that if in a finite family of convex sets in $\I R^d$, a positive fraction of $(d+1)$-tuples intersect, then a positive fraction of all sets intersect. For $p\ge q$, a family of sets has the \Yemph{$(p,q)$-property} if each of its $p$-tuples contains an intersecting $q$-tuple. In the early 90s, settling an old conjecture of Hadwiger and Debrunner~\cite{1957HadwigerDeb}, Alon and Kleitman~\cite{1992ALonKleitman} proved the following famous $(p,q)$-theorem, a far-reaching generalization of Helly's theorem. For a set system $\mathcal{G}$ defined on a ground set $X$, a set $T\subseteq X$ is a \Yemph{transversal} for $\mathcal{G}$ if for every $A\in \C G$, we have $T\cap A\neq\varnothing$. The \Yemph{transversal number} of $\mathcal{G}$, denoted by $\tau(\mathcal{G})$, is the minimum size of a transversal for $\mathcal{G}$.

\begin{theorem}[$(p,q)$-theorem~\cite{1992ALonKleitman}]
    Let $d,p,q$ be positive integers with $p\ge q\ge d+1$. If $\mathcal{F}$ is a finite family of convex sets in $\I R^{d}$ with the $(p,q)$-property, then $\tau(\mathcal{F})\le O_{d,p,q}(1)$.
\end{theorem}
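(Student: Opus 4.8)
The plan is to follow the classical two-stage argument of Alon and Kleitman: first bound the \emph{fractional} transversal number $\tau^{*}(\mathcal{F})$, and then upgrade this to a bound on the integral transversal number $\tau(\mathcal{F})$ by means of the $\eps$-net theorem. We may assume $|\mathcal{F}|\ge p$, since otherwise $\tau(\mathcal{F})\le|\mathcal{F}|<p$ already.

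\textbf{Step 1: $\tau^{*}(\mathcal{F})=O_{d,p,q}(1)$.} By LP duality, $\tau^{*}(\mathcal{F})$ equals the fractional matching number $\nu^{*}(\mathcal{F})$, the maximum of $\sum_{F\in\mathcal{F}}w_{F}$ over weights $w\colon\mathcal{F}\to\I R_{\ge 0}$ with $\sum_{F\ni x}w_{F}\le 1$ for all $x\in\I R^{d}$. Fix an optimal $w$; since an LP with rational data attains its optimum at a rational point, I would clear denominators (and scale the multiplicities up) to obtain a multiset $\mathcal{G}$ of convex sets, of arbitrarily large size, containing each $F$ with multiplicity proportional to $w_{F}$, in which every $x\in\I R^{d}$ lies in at most a $1/\nu^{*}$ fraction of the members. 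Since $\mathcal{F}$ has the $(p,q)$-property and any $q$ copies of a single (nonempty) set share a point, $\mathcal{G}$ has the $(p',q)$-property for $p'=(q-1)(p-1)+1$; and as $q\ge d+1$, every intersecting $q$-tuple of $\mathcal{G}$ contains an intersecting $(d+1)$-tuple, so a short double count shows that at least a $1/\binom{p'}{d+1}$ fraction of the $(d+1)$-tuples of $\mathcal{G}$ are intersecting. The fractional Helly theorem (the Katchalski--Liu result quoted above) then yields a point $x_{0}$ lying in a positive fraction $\beta=\beta(d,p,q)>0$ of the members of $\mathcal{G}$; combined with the packing bound $1/\nu^{*}$ this forces $\beta\le 1/\nu^{*}$, hence $\tau^{*}(\mathcal{F})=\nu^{*}(\mathcal{F})\le 1/\beta=O_{d,p,q}(1)$.

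\textbf{Step 2: from $\tau^{*}$ to $\tau$.} Pass to the dual range space $\Sigma$ with ground set $\mathcal{F}$ and ranges $\{C_{x}\colon x\in\I R^{d}\}$, where $C_{x}=\{F\in\mathcal{F}\colon x\in F\}$; a transversal of $\mathcal{F}$ is exactly a subfamily of ranges of $\Sigma$ whose union is $\mathcal{F}$, so $\tau(\mathcal{F})$ and $\tau^{*}(\mathcal{F})$ are, respectively, the covering number and fractional covering number of $\Sigma$. The range space $\Sigma$ has VC dimension bounded by a function of $d$ alone---the dual VC-dimension bound for convex sets in $\I R^{d}$, a standard consequence of Radon's lemma---so the standard $\eps$-net-based rounding of the fractional set-cover LP yields a cover of size $O_{d}\bigl(\tau^{*}(\mathcal{F})\log\tau^{*}(\mathcal{F})\bigr)$. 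Therefore $\tau(\mathcal{F})=O_{d}\bigl(\tau^{*}(\mathcal{F})\log\tau^{*}(\mathcal{F})\bigr)=O_{d,p,q}(1)$, as claimed.

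I expect Step 1 to be the crux: distilling, from the purely combinatorial $(p,q)$-property, a positive density of intersecting $(d+1)$-tuples in the discretized family $\mathcal{G}$ requires handling the multiplicities cleanly (the device of enlarging $p$ to $p'$) before the fractional Helly theorem can be applied. Step 2 rests on two nontrivial but by-now-standard inputs---the bound on the dual VC dimension of convex sets, and the $\eps$-net theorem---but is otherwise routine once $\tau^{*}(\mathcal{F})$ has been controlled.
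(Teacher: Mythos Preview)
The paper does not prove this theorem; it is quoted as the Alon--Kleitman result with a citation, so there is no in-paper proof to compare against. That said, your outline contains a genuine error.

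Step 1 is essentially the Alon--Kleitman argument and is fine. The gap is in Step 2: the dual range space $\Sigma$ does \emph{not} have VC dimension bounded in terms of $d$, and this is certainly not a consequence of Radon's lemma. To see that the dual VC dimension of convex sets in $\I R^{2}$ is already infinite, place $2^{m}$ points $\{p_{S}:S\subseteq[m]\}$ in convex position and set $C_{i}=\M{conv}\{p_{S}:i\in S\}$ for $i\in[m]$; then $p_{S}\in C_{i}$ iff $i\in S$, so the $m$ convex sets $C_{1},\ldots,C_{m}$ are shattered by points. One can even force the $(p,q)$-property to hold (translate so that a common point lies in every $C_{i}$) without affecting this. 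Hence the Haussler--Welzl $\eps$-net theorem is unavailable here.

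What Alon and Kleitman actually use in Step 2 is the \emph{weak} $\eps$-net theorem for convex sets in $\I R^{d}$ (Alon--B\'ar\'any--F\"uredi--Kleitman): for every finitely supported probability measure $\mu$ on $\I R^{d}$ and every $\eps>0$ there is a set of $O_{d,\eps}(1)$ points meeting every convex set of $\mu$-measure at least $\eps$. Applying this with $\eps=1/\tau^{*}(\mathcal{F})$ and $\mu$ built from an optimal fractional transversal gives a transversal of size $O_{d,\tau^{*}}(1)=O_{d,p,q}(1)$. This weak $\eps$-net bound is a substantial geometric input that does not follow from any VC-type argument; indeed, obtaining it was one of the main points of the Alon--Kleitman paper and its companion. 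Radon's lemma enters the story only indirectly (through Helly/fractional Helly in Step 1, and through the proof of weak $\eps$-nets), not as a VC-dimension bound.
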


An interesting direction of research in discrete geometry is to prove classical results in Euclidean convexity in an abstract way. We will also work in an axiomatic setting through abstract convexity spaces. By now, abstract versions of Helly's theorem and many of its variants, including the fractional Helly theorem, and $(p,q)$-theorem have been studied in abstract convexity spaces, see e.g.~\cite{2017HellyNumber,2021IJMHolmsen,2020DCGWeakNetRadon}. 

We will cast the problem in~\cref{thm:main0} in the language of convexity spaces, which turns out to be equivalent to the following.

\begin{theorem}\label{thm:main-rephased}
    Let $r\ge 3$, $\eps>0$ and $\C B$ be a set system with the $(r,2)$-property. If for every intersecting pair $A,B\in\C B$, at least an $\eps$ fraction of the $(r-2)$-tuples $\C T$ in $\C B$ satisfies that both $\C T\cup \{A\}$ and $\C T\cup \{B\}$ are pairwise disjoint $(r-1)$-tuples, then $\tau(\C B)= O_{r,\eps}(1)$.
\end{theorem}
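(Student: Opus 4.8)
The plan is to prove \cref{thm:main-rephased} by establishing the equivalent graph-theoretic statement (\cref{thm:main0}) and then translating, or alternatively, to work directly with the set system. I will describe the direct combinatorial approach since the hypotheses are already phrased that way. The key idea is to mimic the Alon--Kleitman proof of the $(p,q)$-theorem, whose two pillars are a \emph{fractional Helly property} and a \emph{weak $\eps$-net / LP-duality} argument. So the first task is to show that our set system $\C B$ satisfies a fractional Helly theorem: if a positive fraction of the $(r-1)$-tuples of $\C B$ are intersecting, then some $\delta$-fraction of all members of $\C B$ share a common point. The hypothesis of \cref{thm:main-rephased} is precisely the combinatorial skeleton needed here: the condition that for every intersecting pair $A,B$, an $\eps$-fraction of $(r-2)$-tuples $\C T$ extend \emph{both} $A$ and $B$ to matchings of size $r-1$ is the abstract analogue of ``large co-neighborhood clique density,'' which via a supersaturation/Ramsey-type argument forces many $(r-1)$-tuples to be intersecting to cluster around common elements.

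Concretely, I would proceed as follows. Step 1: reformulate in graph language. Associate to $\C B$ the graph $G$ whose vertices are the ground-set elements, with $x\sim y$ iff no member of $\C B$ contains both $x$ and $y$; the $(r,2)$-property says $G$ is $K_r$-free, intersecting pairs $A,B$ correspond to non-adjacent vertices, and the matching condition translates the clique-density hypothesis on $G[N(u)\cap N(v)]$. Step 2: invoke \cref{thm:main0} (or reprove its core): an $\eps$-ultra maximal $K_r$-free graph has bounded chromatic number, say $\chi(G)\le C=C(r,\eps)$. Step 3: deduce the transversal bound. A proper coloring of $G$ with $C$ colors partitions the ground set into $C$ independent sets; an independent set in $G$ is a set of elements that are pairwise \emph{covered together}, and one shows (using again the $(r,2)$-property and the density condition, this is where a Helly-type closure enters) that each colour class is ``fractionally pierced'' by boundedly many elements. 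Summing over the $C$ classes yields $\tau(\C B)=O_{r,\eps}(1)$.

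The translation in Step 1 is where I would be most careful: one must check that ``$\C T\cup\{A\}$ and $\C T\cup\{B\}$ are matchings of size $r-1$'' corresponds exactly to $\C T$ being (the set system dual of) a $K_{r-2}$ inside $N(u)\cap N(v)$ --- that is, the $r-2$ sets in $\C T$ are pairwise disjoint, each disjoint from $A$ and from $B$, while $A\cap B\neq\varnothing$; disjointness of sets is adjacency of the corresponding dual vertices, so a matching of size $r-1$ among $\{A\}\cup\C T$ is a clique $K_{r-1}$ in $G$, and since $A,B$ are non-adjacent, $\C T$ together with a chosen vertex of $A\cap B$ forms $K_{r-2}\subseteq N(u)\cap N(v)$. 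Counting $(r-2)$-tuples versus counting $K_{r-2}$-copies differs only by a bounded normalizing factor, so the $\eps$ in the two statements match up to a function of $r$.

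The hard part will be the heart of \cref{thm:main0} itself --- establishing the abstract fractional Helly / bounded chromatic number --- but since the excerpt permits me to assume everything stated before the theorem, and \cref{thm:main0} is stated before \cref{thm:main-rephased}, the remaining work is the (genuinely nontrivial but more routine) dictionary between the convexity-space language and the graph language, plus the final LP-duality step that upgrades ``boundedly many colour classes, each fractionally pierceable'' to ``bounded transversal.'' That last step is the standard Alon--Kleitman trick: one needs a \emph{weak $\eps$-net theorem} for the abstract space, which follows from the bounded VC-dimension (or bounded ``fractional Helly number,'' here equal to $r-1$) guaranteed by $K_r$-freeness; I expect to cite the abstract $(p,q)$ machinery of \cite{2017HellyNumber} rather than redo it. I anticipate the main obstacle is verifying that the abstract convexity space built from $\C B$ genuinely has the finite fractional Helly number needed to feed into that black box, which is exactly what the ultra-maximality hypothesis is designed to supply.
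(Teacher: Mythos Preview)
Your overall plan matches the paper's --- reduce to \cref{thm:main0} via a dictionary between set systems and graphs --- but your dictionary is garbled, and this is a genuine gap. In Step~1 you declare the vertices of $G$ to be the \emph{ground-set elements} of $\C B$; with that choice the $(r,2)$-property (a condition on $r$-tuples of \emph{sets}) has nothing to do with $K_r$-freeness of $G$, and your subsequent sentences (``intersecting pairs $A,B$ correspond to non-adjacent vertices'', ``disjointness of sets is adjacency of the corresponding dual vertices'') silently switch to treating the sets themselves as vertices. The correct construction is the \emph{disjointness graph} $G=D(\C B)$: take $V(G)=\C B$ and put $A\sim B$ iff $A\cap B=\varnothing$. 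Then a $K_r$ in $G$ is a matching of size $r$ in $\C B$, so the $(r,2)$-property is exactly $K_r$-freeness; a non-edge is an intersecting pair; and an $(r-2)$-tuple $\C T$ with $\C T\cup\{A\}$ and $\C T\cup\{B\}$ both matchings of size $r-1$ is precisely a copy of $K_{r-2}$ in $G[N(A)\cap N(B)]$. (Your phrase ``$\C T$ together with a chosen vertex of $A\cap B$'' again conflates sets with ground points and should be dropped.)

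With the dictionary fixed, Step~3 is far simpler than your outline suggests. The paper does \emph{not} run an Alon--Kleitman LP-duality / weak-$\eps$-net argument at this stage: it uses the exact identity $\chi(G)=\tau(\C B(G))$ of \cref{prop:Chi=Tau}, which holds because a transversal for $\C B(G)$ is literally a family of maximal independent sets covering $V(G)$, i.e.\ a proper colouring. So once \cref{thm:main0} gives $\chi(G)=O_{r,\eps}(1)$, the bound $\tau(\C B(G))=O_{r,\eps}(1)$ is immediate. This works because $\C B(G)$ has Helly number~$2$ (\cref{prop:HellyNumber2}): each colour class, being pairwise intersecting, is pierced by a single ground point. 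For an \emph{arbitrary} $\C B$ your Step~3 is an actual obstacle --- bounding $\chi(D(\C B))$ only partitions $\C B$ into $O(1)$ pairwise-intersecting subfamilies, and without a Helly assumption those need not each be pierced by one point; the abstract $(p,q)$ machinery you invoke will not close that gap either, since it too requires a fractional-Helly or weak-net input on $\C B$ itself. The paper avoids this by reading \cref{thm:main-rephased} as the statement for $\C B=\C B(G)$ (see \cref{rmk:chi-bounded}).
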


We further strengthen~\cref{thm:main0,thm:main-rephased} to the following, which is a $(p,q)$-theorem for certain convexity spaces we construct from graphs. We defer the definition of the convexity space $\C C(G)$ below and other necessary notions to~\cref{sec:GeometricConsequences}. We refer readers familiar with convexity spaces to~\cref{sec:pf-map} and~\cref{fig:Relation} which contain an overview of the proof for~\cref{thm:pq-for-C}.

\begin{theorem}[$(r,2)$-theorem for $\C C(G)$]\label{thm:pq-for-C}
  Let $r\ge 3$ and $G$ be a graph with bipartite induced matching number $t$. For any $\C G\subseteq \C C(G)$, if $\C G$ has the $(r,2)$-property, then $\tau(\C G)= O_{r,t}(1)$. In particular, if $G$ is $\alpha$-ultra maximal $K_r$-free, then $\tau(\C G)= O_{r,\alpha}(1)$.
\end{theorem}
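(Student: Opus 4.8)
The plan is to reduce the $(r,2)$-theorem for the abstract convexity space $\C C(G)$ to the classical Alon--Kleitman $(p,q)$-theorem via a bounded-dimension ``fractional Helly'' mechanism, using the bipartite induced matching number $t$ of $G$ as the source of the bounded VC-type parameter. By the standard machinery (see~\cite{2017HellyNumber}), it suffices to establish that $\C C(G)$ has (i) bounded \emph{Radon number} (equivalently, bounded VC dimension of the dual set system, which follows once we control induced matchings), and (ii) a \emph{fractional Helly property}: there is a constant $k = k(r,t)$ and a function $\beta \mapsto \beta'>0$ so that whenever a $\beta$-fraction of the $k$-tuples of a subfamily $\C G \subseteq \C C(G)$ are intersecting, a $\beta'$-fraction of all members of $\C G$ pass through a common point. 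Given both, the abstract $(p,q)$-theorem of Alon--Kleitman type (their colorful-Helly/LP-duality argument, carried out in the axiomatic setting as in~\cite{2017HellyNumber}) yields $\tau(\C G) = O_{r,t}(1)$ directly from the $(r,2)$-property, since $(r,2)$ with $r \ge 2 \ge$ the relevant Helly-type number forces a positive fraction of pairs — hence (by a Ramsey/counting step) a positive fraction of $k$-tuples — to intersect.

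The first step is to unpack the definition of $\C C(G)$ and translate ``intersecting'' and ``transversal'' into graph language; from~\cref{thm:main-rephased} I expect that a member of $\C C(G)$ corresponds to something like a co-neighborhood-type set (or a point set indexed by vertices/$K_{r-2}$'s), two members intersect iff the corresponding pair of vertices is adjacent (or shares a common $K_{r-2}$-extension), and the $(r,2)$-property encodes exactly $K_r$-freeness plus the ultra-maximality density condition. The second step is to prove the bounded Radon number: here is where the bipartite induced matching number $t$ enters. An induced matching of size $s$ in the bipartite ``non-neighborhood'' structure would allow one to shatter a set of size roughly $s$ in the dual system, so $t$ bounded forces the dual VC dimension — and hence, by the Radon-number/VC-dimension equivalence for convexity spaces, the Radon number — to be $O(t)$. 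The third step is the fractional Helly property: assuming a $\beta$-fraction of $k$-tuples are intersecting, I would use the bounded Radon number to invoke the abstract fractional Helly theorem (Radon number $\Rightarrow$ fractional Helly, as in the Kalai/Matou\v{s}ek-type results recorded in~\cite{2017HellyNumber}), extracting a common point through $\beta'|\C G|$ sets. Finally I would feed this into the abstract $(p,q)$ machine to conclude $\tau(\C G) = O_{r,t}(1)$, and the ``in particular'' clause follows because an $\alpha$-ultra maximal $K_r$-free graph has bipartite induced matching number bounded in terms of $r$ and $\alpha$ (the density condition in co-neighborhoods caps the size of an induced bipartite matching, since a large induced matching between two sides each with many $K_{r-2}$'s would build a $K_r$).

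The main obstacle I anticipate is step two — pinning down that the bipartite induced matching number $t$ really does control the Radon number (or VC dimension) of $\C C(G)$, rather than some larger or incomparable parameter. The subtlety is that $\C C(G)$ is an \emph{abstract} convexity space built by taking convex hulls with respect to some closure operator on the vertex/$K_{r-2}$ set of $G$, so the notion of ``Radon partition'' there is not the Euclidean one; I would need to show that a Radon partition of a set $P$ of points forces a certain bipartite induced subgraph structure on $G$ restricted to the vertices involved, and conversely that an induced matching of size $t+1$ would give a point set with no Radon partition, contradicting the bound. A secondary technical point is the counting/Ramsey step that upgrades ``$\eps$-fraction of \emph{pairs} intersect'' (which is what the ultra-maximality hypothesis gives, via~\cref{thm:main-rephased}) to ``$\beta$-fraction of \emph{$k$-tuples} intersect,'' needed to trigger fractional Helly at the right uniformity $k$; this should follow from a supersaturation/dependent-random-choice argument but must be done with the $K_r$-free constraint in hand so that one does not accidentally need a clique one cannot have. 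Once these are settled, combining the bounded Radon number with the abstract fractional-Helly-implies-$(p,q)$ theorem gives the stated $\tau(\C G) = O_{r,t}(1)$, and hence $O_{r,\alpha}(1)$ in the ultra-maximal case.
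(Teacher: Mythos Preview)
Your high-level architecture is the right one and matches the paper's: bound a VC-type parameter of $\C C(G)$ by the bipartite induced matching number $t$, deduce a bounded Radon number, invoke Radon $\Rightarrow$ fractional Helly for abstract convexity spaces, and then feed the $(r,2)$-property into an abstract Alon--Kleitman machine to bound $\tau^*$ and finally $\tau$. The ``in particular'' clause is also handled exactly as you say, via~\cref{thm:BoundedInducedMatching}.

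There are, however, two genuine gaps. First, your model of $\C C(G)$ is wrong: the ground set is $\MIS(G)$, the set of maximal independent sets of $G$; the generators are $K_v=\{I\in\MIS(G):v\in I\}$; and $\C C(G)$ is their intersection closure. So $K_u\cap K_v\neq\varnothing$ precisely when $u,v$ are \emph{non}-adjacent, the opposite of what you guessed, and there are no $K_{r-2}$'s anywhere in the definition. Second, and more importantly, you have missed the key structural fact that makes the whole argument click: $\C C(G)$ has Helly number exactly $2$ (\cref{prop:HellyNumber2}). This is an easy consequence of the correct definition (a pairwise-intersecting family $\{K_{S_i}\}$ corresponds to a pairwise-compatible, hence globally independent, set $\bigcup S_i$, which sits in some $I\in\MIS(G)$). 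With Helly number $2$ in hand, your worrisome ``Ramsey/supersaturation step that upgrades a positive fraction of intersecting pairs to a positive fraction of intersecting $k$-tuples'' is unnecessary and, as you wrote it, incorrect: without Helly number $2$, many pairwise-intersecting $k$-tuples (which is all supersaturation would give you) need not be intersecting $k$-tuples. The paper instead applies~\cref{thm:fractionaltrans} directly with $k=2$ (the Helly number), which internally does the bootstrapping and yields $\tau^*(\C G)=O_{r,t}(1)$.

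A smaller issue: your proposed shortcut ``dual VC dimension bounded $\Rightarrow$ Radon number bounded'' is not a standard implication for convexity spaces. The paper's actual route is: $\match(G)$ bounds the VC dimension of $\C B(G)$ (\cref{lem:VC-Matching}); together with Helly number $2$ this gives weak $\eps$-nets for $\C C(G)=\C B(G)^\cap$ of bounded size via~\cref{thm:WeakEpsRadon}; weak $\eps$-nets then bound the Radon number via~\cref{lem:weak-eps-Radon}; Radon $\Rightarrow$ fractional Helly via~\cref{thm:Radon-FracHelly}; and finally the weak $\eps$-net bound is reused to convert $\tau^*\le O_{r,t}(1)$ into $\tau\le O_{r,t}(1)$. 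So the weak $\eps$-net theorem and the Helly-number-$2$ observation are the two load-bearing pieces you are missing.
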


We remark that~\cref{thm:pq-for-C} is also related to the well-studied $\chi$-boundedness problem~\cite{2020ScottSurvey}. Indeed, it can be viewed also as a strengthening of a $\chi$-boundedness result for ultra maximal $K_r$-free graphs, see~\cref{rmk:chi-bounded}.

\subsection{Blow-up phenomenon}
Given two graphs $G$ and $F$, we say that $G$ is \Yemph{homomorphic} to $F$, denoted by $G\xrightarrow{\textup{hom}} F$, if there exists a homomorphism $\varphi:V(G)\rightarrow V(F)$ preserving adjacencies, i.e.~if $uv\in E(G)$, then $\varphi(u)\varphi(v)\in E(F)$. Note that $\chi(G)=t$ is equivalent to $G$ being homomorphic to $K_{t}$. A natural extension of the chromatic threshold problem raised in~\cite{2002Thomassen} asks for minimum degree condition for an $H$-free graph guaranteeing a bounded size homomorphic image which is also $H$-free. Formally, the \Yemph{homomorphism threshold} of a graph $H$ is defined as
$$\delta_{\textup{hom}}(H):=\inf\big\{\alpha\ge 0:\exists~H\textup{-free}~F=F(\alpha,H)\  \textup{s.t.~}\forall~n\textup{-vertex\ }H\textup{-free}~G,~\delta(G)\ge \alpha n\Rightarrow G\xrightarrow{\textup{hom}} F  \big\}.$$    
As having a bounded homomorphic image implies bounded chromatic number, we have $\delta_{\textup{hom}}(H)\ge\delta_{\chi}(H)$. The first homomorphism threshold was established by {\L}uczak~\cite{2006CombTriangleHom} who showed that $\delta_{\textup{hom}}(K_{3})=\frac{1}{3}$. Later, Goddard and Lyle~\cite{2011JGTKrChromatic} resolved the clique case, proving that $\delta_{\textup{hom}}(K_{r})=\delta_{\chi}(K_r)=\frac{2r-5}{2r-3}$ for all positive integers $r\ge 3$. The proofs of {\L}uczak~\cite{2006CombTriangleHom}, and Goddard and Lyle~\cite{2011JGTKrChromatic} utilized Szemer\'{e}di's regularity lemma~\cite{1978OriginalRegularity}, and therefore gave a tower-type upper bound on the size of the $K_r$-free homomorphic image $F$. Recently, Oberkampf and Schacht~\cite{2020CPCProb} gave a new proof using a clever probabilistic argument. Their proof yields a double exponential bound $2^{2^{O(1/\varepsilon^{2})}}$ on the size of the homomorphic image for $K_r$-free graphs with minimum degree at least $(\frac{2r-5}{2r-3}+\eps)n$, which was the best-known bound for all $r\ge 4$. As for $r=3$, a beautiful (unpublished) result of Brandt and Thomass\'{e}~\cite{2011Unpubilished} proved an optimal bound that any triangle-free graph $G$ with $\delta(G)\ge(\frac{1}{3}+\eps)n$ is homomorphic to a triangle-free graph (so-called Vega graph) of size $O\big(\frac{1}{\varepsilon}\big)$.

Except for cliques, determining the homomorphism threshold for any other graph is still widely open. Letzter and Snyder~\cite{2019JGTC3C5} showed that $\delta_{\textup{hom}}(C_{5})\le\frac{1}{5}$. Later, Ebsen and Schacht~\cite{2020HomoOddCycles} proved that $\delta_{\textup{hom}}(C_{2r+1})\le\frac{1}{2r+1}$ for every $r\ge 2$. A recent significant advancement, due to Sankar~\cite{2022Maya}, shows that $\delta_{\textup{hom}}(C_{2r+1})>0$. It was known that $\delta_{\chi}(C_{2r+1})=0$~\cite{2007OddCycleChromatic}, so her result provides the first example $H$ with $\delta_{\textup{hom}}(H)>\delta_{\chi}(H)$. 

An important variation was studied by {\L}uczak and Thomass{\'e}~\cite{2010ColoringViaVCDim} in their ingenious new proof of $\delta_{\chi}(K_{3})=\frac{1}{3}$. They realized that the $n/3$ minimum degree condition can be relaxed to a linear one if we have bounded VC-dimension (see~\cref{sec:conv-sp} for definition). That is, any $n$-vertex triangle-free graph $G$ with bounded VC-dimension and $\delta(G)=\Omega(n)$ has bounded chromatic number (see~\cref{thm:VCToBoundedChromaticNumber}). 

Our next result shows that VC-dimension, while significant in the chromatic threshold problem, is not so influential in the homomorphism threshold problem. 

\begin{theorem}\label{thm:VC-notfor-hom}
    There exists an $n$-vertex triangle-free graph $G$ with VC-dimension $3$ and $\delta(G)\ge \frac{n}{4}$ such that $G$ has no triangle-free homomorphic image of size smaller than $\frac{n}{4}$.
\end{theorem}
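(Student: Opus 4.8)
The plan is to realise $G$ explicitly as a circular complete graph sitting just below the triangle-free threshold. For integers $p>q\ge 1$ recall that $K_{p/q}$ is the circulant on $\mathbb{Z}_p$ with $i\sim j$ iff $q\le |i-j|_{\mathbb{Z}_p}\le p-q$; it is vertex-transitive with $\alpha(K_{p/q})=q$, it is a core when $\gcd(p,q)=1$, and its circular chromatic number equals $\chi_c(K_{p/q})=p/q$. I would take $n$ prime, $q=\lceil n/3\rceil$, and $G=K_{n/q}$, so that $n/q=3-\varepsilon$ with $\varepsilon=\Theta(1/n)$ (in fact $\varepsilon\le 6/(n+2)$). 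Triangle-freeness is clear: three pairwise adjacent vertices would be pairwise at circular distance $\ge q>n/3$, yet any three points of $\mathbb{Z}_n$ have two of them within circular distance $\le n/3$. The minimum degree is $\delta(G)=n-2q+1\ge n/4$. Finally, the neighbourhood hypergraph $\{N(v):v\in V(G)\}$ is exactly the family of all circular arcs of one fixed length on $\mathbb{Z}_n$, a textbook geometric range space of bounded VC-dimension; a short case check (a shattered triple, no shattered quadruple) gives the value, which one arranges to be exactly $3$, if necessary by working instead with the closed neighbourhoods $\{N[v]\}$ or by a cosmetic modification of the construction.

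The substantive point is that $G$ has no triangle-free homomorphic image on fewer than $n/4$ vertices, which I would obtain from monotonicity of the circular chromatic number. Suppose $\varphi\colon G\xrightarrow{\textup{hom}}F$ with $F$ triangle-free; replacing $F$ by the subgraph induced on $\varphi(V(G))$ we may assume $V(F)=\varphi(V(G))$. Then $\chi_c(F)\ge\chi_c(G)=n/q=3-\varepsilon$. If $\chi_c(F)<3$, write it in lowest terms as $a/b$: the classical inequality $a\le|V(F)|$ together with $a<3b$ gives $\chi_c(F)\le 3-\tfrac{3}{|V(F)|+1}$, so $|V(F)|\ge\tfrac{3}{\varepsilon}-1\ge\tfrac{n}{2}\gg n/4$. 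It remains to treat $\chi_c(F)\ge3$, i.e.\ (since $F$ is triangle-free) $\chi(F)\ge4$; here one must argue that the rigid ``almost-$K_3$'' graph $K_{n/q}$ cannot fold onto a bounded-size triangle-free graph of chromatic number $\ge4$ — for instance via the no-homomorphism lemma for vertex-transitive sources, which yields $\alpha(F)/|V(F)|\le\alpha(G)/|V(G)|=q/n$ and hence $|V(F)|\ge(3-\varepsilon)\alpha(F)$, or via a direct analysis of $\mathrm{Hom}(K_{n/q},\cdot)$ exploiting that every homomorphic image of $K_{n/q}$ inherits a winding of the same circular ratio.

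I expect this last case to be the main obstacle: making rigorous that a near-extremal triangle-free graph admits no small triangle-free homomorphic image when the image may have chromatic number $\ge 4$ — the $\chi_c$-bound settles the case $\chi(F)=3$ cleanly, but the general case seems to need either a hands-on description of the homomorphisms out of $K_{n/q}$ or a dedicated input about triangle-free graphs whose circular chromatic number is close to $3$ — and then checking that a single explicit choice of $n,q$ meets all three numerical targets ($\delta(G)\ge n/4$, image size $\ge n/4$, VC-dimension exactly $3$) simultaneously. By contrast, triangle-freeness, the degree count, and the VC-dimension of arc hypergraphs are routine.
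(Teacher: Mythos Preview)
Your construction and the paper's are entirely different. The paper builds $G$ by hand: take $M_{t,t}:=K_{t,t}$ minus a perfect matching on $\{x_i,y_i\}_{i\in[t]}$, attach a maximal triangle-free $\Gamma$ on $\{z_i\}_{i\in[t]}$ via the edges $x_iz_i,y_iz_i$, and blow up. With $\Gamma=K_{n/8,n/8}$ and appropriate blow-up sizes one gets $\delta(G)\ge n/4$, and a finite case analysis pins the VC-dimension at exactly $3$. The homomorphism lower bound comes from a one-line obstruction lemma: if $G$ contains vertices $x_1,\dots,x_t$ such that every pair $x_i,x_j$ is joined by an \emph{induced} $P_4$, then any triangle-free homomorphic image has at least $t$ vertices (identifying $x_i$ with $x_j$ folds that $P_4$ onto a triangle). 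In the construction $x_iz_iy_ix_j$ is such a path for all $i\neq j$, so $|V(F)|\ge t=n/4$ with no case split on the image at all.

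The gap you yourself flag is real and your suggested patches do not close it. In the regime $\chi_c(F)\ge 3$ the no-homomorphism inequality gives only $|V(F)|\ge (n/q)\,\alpha(F)\approx 3\,\alpha(F)$; since triangle-free graphs can have independence number as small as $\Theta(\sqrt{|V(F)|\log|V(F)|})$, this yields a constant lower bound on $|V(F)|$, not a linear one. Concretely, nothing in your sketch excludes a homomorphism from $K_{n/q}$ into a fixed triangle-free graph with $\chi\ge 4$: for the Gr\"otzsch graph $M$ one has $\chi_f(M)=29/8>n/q$ and odd girth $5$ (the same as $K_{n/q}$ for your parameters), so neither the fractional-chromatic nor an odd-girth obstruction applies. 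Whether or not such a homomorphism actually exists, ruling it out is a genuine task that the ``winding of the same circular ratio'' remark does not discharge. The VC-dimension step is also loose: neighbourhoods in $K_{n/q}$ form the family of circular arcs of one fixed length $n-2q+1\approx n/3$, which is strictly smaller than the family of all arcs, and its VC-dimension depends on that length; you would need to exhibit a shattered triple and rule out a shattered quadruple for these specific parameters, not defer to the generic arc bound. The paper's induced-$P_4$ device bypasses all of this: it converts the homomorphism lower bound into a purely local statement about $G$, with no analysis of the target $F$ whatsoever.
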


What is more relevant then? This is the content of our next main result, which strengthens~\cref{thm:main0}. Our theorem shows that the graph under study is in fact a blow-up of a constant size graph, which then necessarily also has to be $K_r$-free. For a graph $H$, we write $H[t]$ for the \Yemph{$t$-blow-up} of $H$ obtained by replacing every vertex of $H$ by $t$ independent copies (i.e. every vertex becomes an independent set of size $t$ and every edge becomes a copy of $K_{t,t}$). We simply write $H[\cdot]$ when mentioning a blow-up of $H$ without specifying its size (and the sizes of these independent sets could be different). Obviously, if $G=F[\cdot]$, then $G\xrightarrow{\textup{hom}} F$.

\begin{theorem}\label{thm:main1}
    Given $r\ge 3, \eps>0$ and $G$ be an $n$-vertex $K_{r}$-free graph. If for every non-adjacent pair of vertices $u,v\in V(G)$, the induced subgraph $G[N(u)\cap N(v)]$ contains $\varepsilon n^{r-2}$ copies of $K_{r-2}$, then $G=F[\cdot]$ for some maximal $K_{r}$-free graph $F$ on at most $2^{O((\frac{1}{\eps}+r)\log\frac{1}{\eps})}$ vertices. 
\end{theorem}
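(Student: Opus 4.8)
The plan is to derive the blow-up structure from a bounded ``test set'' $S$ supplied by the $(r,2)$-theorem, and then to count. I would first record the soft reduction: the relation $u\approx v :\Leftrightarrow N(u)=N(v)$ is an equivalence relation whose classes are independent (if $N(u)=N(v)$ and $u\ne v$ then $u\notin N(u)=N(v)$), and since $N(u)=N(u')$ makes adjacency to every third vertex identical for $u$ and $u'$, between any two $\approx$-classes $G$ is complete or empty. Hence $G=F[\cdot]$ with $F:=G/\!\approx$, and $|V(F)|$ equals the number of distinct open neighbourhoods of $G$. Here $F$ is $K_r$-free (a clique of $F$ lifts, one vertex per class, to a clique of $G$), and it is \emph{maximal} $K_r$-free: if a non-edge $ab$ of $F$ could be added keeping $K_r$-freeness, pick $u$ in class $a$ and $v$ in class $b$; these are non-adjacent in $G$, and $G+uv$ is still $K_r$-free because any copy of $K_r$ in $G+uv$ has pairwise non-$\approx$-equivalent vertices and so projects to a copy of $K_r$ in $F+ab$. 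But then $G[N(u)\cap N(v)]$ would be $K_{r-2}$-free (a $K_{r-2}$ there together with $u,v$ would be a $K_r$ in $G+uv$), contradicting the density hypothesis. So it suffices to bound the number of distinct neighbourhoods by $2^{O((\tfrac1\varepsilon+r)\log\tfrac1\varepsilon)}$.

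For that I would invoke \cref{thm:pq-for-C}. Since $G$ is $\varepsilon$-ultra maximal $K_r$-free, the relevant family in $\C C(G)$ has the $(r,2)$-property (which here is a reformulation of $K_r$-freeness) and hence bounded transversal number; unwinding the definition of $\C C(G)$ and tracking the constants through the underlying $(p,q)$-argument, this yields a vertex set $S\subseteq V(G)$ with $|S|=O\big((\tfrac1\varepsilon+r)\log\tfrac1\varepsilon\big)$ such that for every non-adjacent pair $u,v\in V(G)$ the set $S\cap N(u)\cap N(v)$ induces a copy of $K_{r-2}$ (for $r=3$ this just says $S$ meets $N(u)\cap N(v)$).

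The crux after that is to show $S$ \emph{determines} neighbourhoods: if $u\not\sim v$ and $N(u)\cap S=N(v)\cap S$, then $N(u)=N(v)$. Suppose not, say $w\in N(u)\setminus N(v)$. As $v\not\sim w$, there is a $K_{r-2}$ $Q$ with $Q\subseteq S\cap N(v)\cap N(w)$; then $Q\subseteq N(v)\cap S=N(u)\cap S$, so $Q\subseteq N(u)$. But now $u$ and $w$ are both complete to the clique $Q$ and $u\sim w$, so $Q\cup\{u,w\}$ is a $K_r$ in $G$ --- a contradiction. To finish, group the vertices by their trace $N(v)\cap S$; there are at most $2^{|S|}$ groups, and within one group $C$ any two non-$\approx$-equivalent vertices must be adjacent (a non-edge would be pinned together by the previous sentence), so $C$ is a disjoint union of $\approx$-classes that are pairwise completely joined, i.e.\ $G[C]$ is a blow-up of some $K_m$; then $K_m\subseteq G$ forces $m\le r-1$. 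Hence the number of distinct neighbourhoods, and so $|V(F)|$, is at most $(r-1)\,2^{|S|}=2^{O((\tfrac1\varepsilon+r)\log\tfrac1\varepsilon)}$.

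I expect the genuine difficulty to lie entirely in producing $S$ with the stated near-optimal size. The reduction to counting neighbourhoods, the maximality of $F$, and the ``$S$ determines neighbourhoods'' step are all short and use only $K_r$-freeness together with the existence of one $K_{r-2}$ in each relevant co-neighbourhood; no regularity lemma, chromatic-number bound, or minimum-degree condition is needed. Obtaining $|S|=O\big((\tfrac1\varepsilon+r)\log\tfrac1\varepsilon\big)$ rather than a crude $O_{r,\varepsilon}(1)$ is precisely the quantitative content of the $(r,2)$-theorem for $\C C(G)$ --- an Alon--Kleitman style fractional-Helly count carried out in the abstract convexity space --- and that is where the real work, and the resulting single-exponential bound, comes from.
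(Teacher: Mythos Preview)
Your reduction to counting distinct neighbourhoods and the ``$S$ determines neighbourhoods'' argument are both correct and pleasant, but the heart of the proposal---the existence of $S$---is a genuine gap, and \cref{thm:pq-for-C} does not supply it. That theorem bounds $\tau(\C G)$ for $\C G\subseteq\C C(G)$, where the ground set of $\C C(G)$ is $\MIS(G)$; a transversal there is a collection of \emph{maximal independent sets} (indeed $\tau(\C B(G))=\chi(G)$ by \cref{prop:Chi=Tau}), not a set of vertices, and there is no mechanism in that framework producing a vertex set $S$ with $K_{r-2}\subseteq G[S\cap N(u,v)]$ for every non-edge $uv$. Moreover, the quantitative dependence in \cref{thm:pq-for-C} is far worse than you assert: it passes through $\match(G)\le(2/\varepsilon)^{2/\varepsilon}$ (\cref{thm:BoundedInducedMatching}) and then through the Radon and fractional-Helly bounds of \cref{thm:Radon-C,thm:frac-Hally-C}, which are exponential and double-exponential in $\match(G)$ respectively. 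So ``tracking the constants through the underlying $(p,q)$-argument'' cannot possibly yield $|S|=O\big((\tfrac1\varepsilon+r)\log\tfrac1\varepsilon\big)$; at best you would get a tower-type bound. You have correctly located where the work is, but you have not done it, and the tool you cite does not do it either.

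The paper's proof does not go through $\C C(G)$ at all for \cref{thm:main1}. Instead it works with the VC-dimension of $G$ itself (the set system $\{N(v):v\in V(G)\}$): the no-half-graph lemma (\cref{lemma:NoHalfGraph}) shows $G$ is $\C M_{t,t}$-free with $t=\tfrac1\varepsilon+1$, and a short shattering argument then gives $\VC(G)\le t+r-4$. Haussler's packing lemma (\cref{lemma:Packing}) applied with separation $s=\tfrac{\varepsilon n}{10}$ yields at most $(1/\varepsilon)^{O(1/\varepsilon+r)}$ clusters of vertices whose neighbourhoods are pairwise within symmetric difference $2s$. The density hypothesis is then used \emph{directly}---not via any transversal---to upgrade ``close neighbourhoods'' to ``identical neighbourhoods'': if $v,w$ lie in the same cluster and some $u$ has $uv\in E(G)$ but $uw\notin E(G)$, then all but $2sn^{r-3}<\tfrac{\varepsilon}{2}n^{r-2}$ of the $K_{r-2}$'s in $N(u,w)$ also lie in $N(v)$, and any one of them together with $u,v$ gives a $K_r$. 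This is the step that replaces your hypothetical witness set $S$, and it is what makes the single-exponential bound come out.
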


The merit of~\cref{thm:main1} is that it reveals the cause of the blow-up phenomenon $G=F[\cdot]$. Indeed, since the chromatic and homomorphism thresholds mysteriously coincide for cliques: $\delta_{\textup{hom}}(K_{r})=\delta_{\chi}(K_r)$, it was not clear what separates these two problems for cliques. \cref{thm:main1} shows that for the blow-up phenomenon, rather than VC-dimension, the main driving factor is the clique density condition in the co-neighborhoods considered above.

Interestingly, in contrast to the $O(\frac{1}{\varepsilon})$ bound on the size of triangle-free homomorphic image in Brandt-Thomass\'{e}'s work~\cite{2011Unpubilished} under the stronger minimum degree condition, much to our own surprise, the single exponential bound in $\frac{1}{\eps}$ in~\cref{thm:main1} is optimal as shown by the following construction.

\begin{theorem}\label{thm:main1-lowerbound}
    For every $r\ge 3$, there exists an $n$-vertex $K_{r}$-free graph $G$ such that for every pair of non-adjacent vertices $u,v$, $G[N(u)\cap N(v)]$ contains at least $\varepsilon n^{r-2}$ copies of $K_{r-2}$, but $G$ has no $K_{r}$-free homomorphic image of size smaller than $2^{\frac{1}{8r^r\varepsilon}}$.
\end{theorem}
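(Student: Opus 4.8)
The plan is to reduce the whole statement to producing a single graph of large chromatic number. For each $r\ge 3$ and each small $\varepsilon>0$ I would construct an $n$-vertex $K_r$-free graph $G$ that satisfies the stated $K_{r-2}$-density condition on co-neighbourhoods and, in addition, has $\chi(G)\ge 2^{1/(8r^r\varepsilon)}$. This is enough: if $F$ is any $K_r$-free graph with $G\xrightarrow{\textup{hom}}F$, then composing the homomorphism with a proper colouring of $F$ gives $\chi(G)\le\chi(F)\le|V(F)|$, so every $K_r$-free homomorphic image of $G$ — including $G$ itself — has at least $\chi(G)\ge 2^{1/(8r^r\varepsilon)}$ vertices. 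Thus one never has to reason about homomorphisms directly; all the content is hidden in the construction of $G$.

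To pass from a triangle-free building block to general $r$, I would take a triangle-free graph $G_3$ on $n_3$ vertices with (a) $\chi(G_3)\ge 2^{\Omega(1/\varepsilon_3)}$, (b) $|N(u)\cap N(v)|\ge\varepsilon_3 n_3$ for every non-adjacent pair, and (c) $e(G_3)\ge\varepsilon_3 n_3^{\,2}$ (the last being essentially forced by (b)), where $\varepsilon_3:=(r-2)^{r-2}\varepsilon$, and then set $G:=G_3 * T$, the join of $G_3$ with a balanced complete $(r-3)$-partite graph $T$ whose parts have size $\approx n_3$. One checks: (i) $\omega(G)=\omega(G_3)+\omega(T)=2+(r-3)=r-1$, so $G$ is $K_r$-free; (ii) $\chi(G)=\chi(G_3)+(r-3)$; (iii) every non-adjacent pair of $G$ lies either inside $G_3$ or inside one part of $T$ — in the first case the co-neighbourhood contains every $(r-2)$-set formed by a common $G_3$-neighbour together with one vertex from each part of $T$, in the second every $(r-2)$-set formed by an edge of $G_3$ together with one vertex from each of the remaining $r-4$ parts; in either case such a set spans a $K_{r-2}$, and counting them via (b), resp.\ (c), yields at least $\varepsilon n^{r-2}$ copies of $K_{r-2}$ with the chosen parameters. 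Since $(r-2)^{r-2}\le r^r$, the bound in (a) propagates to $\chi(G)\ge 2^{\Omega(1/(r^r\varepsilon))}$, which after fixing the implied constants gives the exponent $\tfrac{1}{8r^r\varepsilon}$. (For $r=3$ the graph $T$ is empty and $G=G_3$.)

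The heart of the argument is therefore the triangle-free base case: a graph whose chromatic number is \emph{exponentially} large in $1/\varepsilon_3$ and yet in which \emph{every} non-adjacent pair has at least an $\varepsilon_3$-fraction of all vertices as common neighbours. These two requirements pull against each other — Theorem~\ref{thm:main1} says the co-degree condition alone forces the graph to be a blow-up of a graph on only $2^{O(1/\varepsilon_3)}$ vertices — so the construction must be tuned to sit essentially at that extreme. Since a blow-up changes neither the chromatic number nor the co-neighbourhood densities relative to $n$, it is enough to design a \emph{small} graph $F_0$ on $\Theta(2^{1/\varepsilon_3})$ vertices that is triangle-free, has all neighbourhoods and common neighbourhoods of size $\gtrsim\varepsilon_3|V(F_0)|$, and has $\chi(F_0)=2^{\Omega(1/\varepsilon_3)}$, and then blow it up to reach an arbitrarily large $n$. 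A layered/recursive construction — each layer multiplying the chromatic number while only mildly eroding the guaranteed co-degree fraction — is the natural route to the exponential gap. I expect essentially all the work to be here: at every stage one must bound the chromatic number from below (say via a fractional-cover or topological lower bound) \emph{and} simultaneously certify the minimum co-degree from below over \emph{all} non-adjacent pairs. This is exactly the combination that the classical dense triangle-free high-chromatic constructions (Hajnal's Kneser-type graphs, iterated Mycielskians, Borsuk-type graphs) fail to provide, since there the chromatic number ends up only \emph{logarithmic} in $1/\varepsilon$; obtaining the full $2^{\Omega(1/\varepsilon)}$ requires a genuinely new construction, and verifying its two competing properties with matching constants is the main obstacle.
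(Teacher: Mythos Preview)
Your reduction from general $r$ to $r=3$ via a join with a balanced complete $(r-3)$-partite graph matches the paper's. The gap is entirely in the triangle-free base case: you reduce to constructing an $\varepsilon$-ultra maximal triangle-free graph with $\chi\ge 2^{\Omega(1/\varepsilon)}$, openly concede you do not have one, and call producing it ``the main obstacle''. That obstacle is genuine if you insist on chromatic number as the lower-bound certificate --- but the paper shows chromatic number is the wrong certificate.

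The paper uses instead a purely structural obstruction (\cref{lem:obstruction-hom}): if $G$ contains $t$ vertices with an induced $P_4$ between every pair, then $G$ has no triangle-free homomorphic image of size $<t$. This is far easier to arrange than large chromatic number. The paper's explicit base graph $H$ on $\{a_i^{(0)},a_i^{(1)}\}_{i\in[d]}\cup\{0,1\}^d$ (with edges $a_i^{(0)}a_i^{(1)}$, $\boldsymbol{u}\bar{\boldsymbol{u}}$, and $\boldsymbol{u}a_i^{(u_i)}$) is in fact $4$-colourable --- colour the $a$-vertices with $1,2$, colour $\boldsymbol{0},\boldsymbol{1}$ with $2,1$, and $2$-colour the remaining perfect matching on $\{0,1\}^d$ with $3,4$ --- so your chromatic route cannot even detect it. Yet $2^{d-1}$ of the cube vertices are pairwise linked by induced $P_4$'s through the $a_i^{(j)}$, which already forces every triangle-free homomorphic image to have $\ge 2^{d-1}$ vertices; blowing up the $a$-side then secures the co-degree condition with $d\approx 1/(8\varepsilon)$. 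The idea you are missing is precisely to replace ``$\chi(G)$ large'' by ``many vertices pairwise separated by induced $P_4$'s'': this turns what you flag as a hard new construction into an explicit half-page one.
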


\subsection{Applications}\label{sec:application}
As a first application, the chromatic and homomorphism thresholds for cliques, $\delta_{\textup{hom}}(K_{r})=\delta_{\chi}(K_r)=\frac{2r-5}{2r-3}$, are corollaries of~\cref{thm:main0,thm:main1}, and~\cref{thm:main1} quantitatively improves the double exponential bound in~\cite{2020CPCProb} to a single exponential one. Indeed, the minimum degree condition $\delta(G)\ge(\frac{2r-5}{2r-3}+\varepsilon)n$ implies the clique density condition in co-neighborhoods considered in~\cref{thm:main0,thm:main1}, see~\cref{prop:MinDegreeToUltra}. 

In addition, one can consider the following natural variations on $\delta_{\chi}$ and $\delta_{\textup{hom}}$ with increasingly stronger hypotheses. First define a `higher moment minimum degree': for $a\in\I N$, let $\delta^{(a)}(G)$ be the minimum co-degree over all independent sets of size $a$. So $\delta^{(1)}(G)=\delta(G)$ and $\delta^{(2)}(G)=\min\{|N(u)\cap N(v)|:uv\notin E(G)\}$. Then, we can define a higher moment homomorphism threshold as follows. 
$$\delta_{\textup{hom}}^{(a)}(H):=\inf\big\{\alpha\ge 0:\exists~H\textup{-free}~F=F(\alpha,H)\  \textup{s.t.~}\forall~n\textup{-vertex\ }H\textup{-free}~G,~\delta^{(a)}(G)\ge \alpha n\Rightarrow G\xrightarrow{\textup{hom}} F  \big\}.$$  
More generally, for $a,b\in\I{N}$ with $b\ge 2$, let $\hat{\delta}^{(a,b)}(G)$ be the minimum relative $K_{b}$-density in the subgraph induced by the co-neighborhood $N(I)$ of $I$ over all independent sets $I$ of size $a$. In other words, 
$$\hat{\delta}^{(a,b)}(G)=\min\limits_{I:|I|=a}\frac{k_{b}(G[N(I)])}{\binom{|N(I)|}{b}},$$ where $k_b(\cdot)$ counts the number of copies of $K_b$. Define
\begin{align*}
    \delta_{\textup{hom}}^{(a,b)}(H) :=  \inf\big\{\alpha\ge 0: & \exists~H\textup{-free}~\textup{graph}~F=F(\alpha,H)\ 
     \textup{s.t.~} \forall~n\textup{-vertex\ }H\textup{-free}~G,\\&~\delta^{(a)}(G)=\Omega(n),~\hat{\delta}^{(a,b)}(G)\ge \alpha\Rightarrow G\xrightarrow{\textup{hom}} F  \big\}.
\end{align*}
Thus, $\delta_{\textup{hom}}=\delta_{\textup{hom}}^{(1)}$ and~\cref{thm:main1} can be stated as $\delta_{\textup{hom}}^{(2,r-2)}(K_{r})=0$. 

We shall see that~\cref{thm:main1} determines all other variations $\delta_{\textup{hom}}^{(2,b)}(K_{r})$, which take values from generalized Tur\'an densities: $\pi_{s}(K_{t}):=\lim\limits_{n\rightarrow\infty}\frac{\textup{ex}(n,K_{s},K_{t})}{\binom{n}{s}}$, where $s\le t$. Here $\textup{ex}(n,K_{s},K_{t})$ is the maximum number of $K_{s}$ in an $n$-vertex $K_{t}$-free graph. Erd\H{o}s~\cite{1962ErdosPi} determined back in 1962 all such densities $\pi_{s}(K_{t})=\prod_{i=1}^{s-1}\frac{t-1-i}{t-1}$, which are realized by $T_{n,t-1}$.

\begin{cor}\label{cor:GeneralThresholds}
    Let $r,a,b\in\I{N}$ with $r\ge 3$, $a\ge 2$, and $2\le b\le r-2$. Then we have 
    $$\delta_{\textup{hom}}^{(a)}(K_{r})=\frac{r-3}{r-2} \quad \mbox{and} \quad \delta_{\textup{hom}}^{(a,b)}(K_{r})=\pi_{b}(K_{r-2})=\prod_{i=1}^{b-1}\frac{r-3-i}{r-3}.$$
\end{cor}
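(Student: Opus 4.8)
The plan is to derive the corollary from \cref{thm:main1} (and its lower bound \cref{thm:main1-lowerbound}), by showing that each of the quantities $\delta_{\textup{hom}}^{(a)}(K_r)$ and $\delta_{\textup{hom}}^{(a,b)}(K_r)$ is squeezed between a lower bound coming from an extremal construction and an upper bound coming from \cref{thm:main1}. Throughout I will use the fact, to be recorded as a separate observation, that \cref{thm:main1} (the case $a=2$, $b=r-2$) is the ``heaviest'' hypothesis among the ones in play: a density condition on $K_b$'s inside co-neighborhoods of independent pairs with $b<r-2$ forces, via supersaturation / Kruskal--Katona-type counting inside that common neighborhood, a positive density of $K_{r-2}$'s as well (since a $K_{r-2}$-free graph can have at most $\pi_{b}(K_{r-2})\binom{m}{b}$ copies of $K_b$ by Erd\H{o}s' theorem, the reverse contrapositive gives the needed supersaturation once the $K_b$-density strictly exceeds $\pi_b(K_{r-2})$). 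Similarly, a co-degree condition $\delta^{(a)}(G)\ge\alpha n$ for $a\ge 2$, together with $K_r$-freeness, forces the $K_{r-2}$-density condition of \cref{thm:main1} inside every $N(u)\cap N(v)$ for a non-adjacent pair $u,v$: one picks $a-2$ further vertices forming an independent set with $u,v$ inside the common neighborhood (or argues directly that $G[N(u)\cap N(v)]$ is itself $\alpha'$-ultra maximal $K_{r-2}$-free for a suitable $\alpha'$), so \cref{thm:main1} applies and $G$ is a blow-up of a bounded graph. This gives all the upper bounds $\delta_{\textup{hom}}^{(a)}(K_r)\le \frac{r-3}{r-2}$ and $\delta_{\textup{hom}}^{(a,b)}(K_r)\le \pi_b(K_{r-2})$ at once, and in fact with the single-exponential bound on $|F|$.

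For the matching lower bounds, I would exhibit, for each $\alpha$ slightly below the claimed threshold, a $K_r$-free graph $G$ satisfying the relevant hypothesis but with no bounded $K_r$-free homomorphic image — indeed with homomorphic image growing with $n$. The natural candidate is a join-type construction: take $K_{r-3}$ (or an appropriate complete multipartite piece) joined to a carefully chosen triangle-free-like graph $J$ on the remaining vertices with large chromatic number and the right co-degree profile, so that the co-neighborhood of any independent pair is essentially $J$ plus a clique, which has $K_b$-density just below $\pi_b(K_{r-2})$ when $\alpha$ is just below that value; the Kneser/Hajnal-type graph used for the chromatic threshold lower bound $\delta_\chi(K_r)\ge\frac{2r-5}{2r-3}$ (and variants of the construction in \cref{thm:main1-lowerbound}) provides the ``no small homomorphic image'' feature. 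One has to check that the co-degree $\delta^{(a)}(G)$ of this graph, for the $\delta_{\textup{hom}}^{(a)}$ statement, is $(\frac{r-3}{r-2}-o(1))n$ — this is a direct computation for the balanced blow-up of $K_{r-2}$ with one part replaced by a sparse high-chromatic graph — and that in the $\delta_{\textup{hom}}^{(a,b)}$ statement the $K_b$-density in co-neighborhoods equals $\pi_b(K_{r-3})\cdot(1+o(1))=\prod_{i=1}^{b-1}\frac{r-3-i}{r-3}$ minus a small slack, matching the formula $\pi_b(K_{r-2})$ as stated (note $\pi_b(K_{r-2})=\prod_{i=1}^{b-1}\frac{(r-2)-1-i}{(r-2)-1}=\prod_{i=1}^{b-1}\frac{r-3-i}{r-3}$), since removing one vertex class from the co-neighborhood shifts $r-2\mapsto r-3$ in exactly the right way.

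I expect the main obstacle to be the lower-bound constructions, specifically verifying simultaneously that (i) the graph is genuinely $K_r$-free, (ii) the co-degree or co-neighborhood $K_b$-density sits at the claimed value rather than strictly below it (requiring the construction to be tight, e.g. balanced Tur\'an-type blow-ups with a controlled sparse perturbation), and (iii) the sparse high-chromatic ingredient survives the blow-up so that no bounded $K_r$-free homomorphic image exists — this last point is where one reuses the structure of Hajnal's / the Kneser-graph construction and the argument already present in \cref{thm:main1-lowerbound}. The upper-bound direction, by contrast, should be essentially a bookkeeping reduction to \cref{thm:main1} once the supersaturation step ($K_b$-density above $\pi_b(K_{r-2})$ implies positive $K_{r-2}$-density) is stated cleanly; the only subtlety there is handling the case $b<r-2$ where one must not lose the quantitative dependence, but since the corollary only claims the value of the threshold (an infimum), any positive $K_{r-2}$-density suffices and \cref{thm:main1} closes it.
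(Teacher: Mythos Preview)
Your overall architecture matches the paper's: lower bounds via a Tur\'an-type construction with a high-chromatic triangle-free graph planted in one part, and upper bounds by reducing to the $\eps$-ultra maximal $K_r$-free hypothesis of \cref{thm:main1}. In particular, your treatment of the $\delta_{\textup{hom}}^{(a,b)}$ upper bound is exactly the paper's: $K_b$-density strictly above $\pi_b(K_{r-2})$ in each $G[N(u,v)]$ gives, by supersaturation for Erd\H{o}s' generalized Tur\'an theorem, positive $K_{r-2}$-density there, and since $|N(u,v)|=\Omega(n)$ by hypothesis, \cref{thm:main1} applies.

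The genuine gap is in your upper bound for $\delta_{\textup{hom}}^{(a)}(K_r)$. Neither of your two suggested mechanisms produces $K_{r-2}$-density in $G[N(u,v)]$. Picking $a-2$ further vertices to extend $\{u,v\}$ to a larger independent set $I$ only yields $|N(u,v)|\ge|N(I)|\ge\alpha n$, which is just the $a=2$ codegree condition and says nothing about cliques inside the co-neighborhood. And the parenthetical claim that $G[N(u)\cap N(v)]$ is ``$\alpha'$-ultra maximal $K_{r-2}$-free'' is the \emph{opposite} of what you want: you need many copies of $K_{r-2}$ there, not none. What is actually required --- and what the paper isolates as a separate short proposition before the proof --- is that $\delta^{(2)}(G)\ge cn$ forces, for every $w\in N(u,v)$, the degree bound $d_{G[N(u,v)]}(w)\ge |N(u,v)|+\delta(G)-n\ge(2c-1)n$, hence relative minimum degree at least $2-1/c$ in $G[N(u,v)]$. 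When $c>\frac{r-3}{r-2}$ this exceeds $\pi_2(K_{r-2})=\frac{r-4}{r-3}$, and now the supersaturation step from the $(a,b)$ case (with $b=2$) yields positive $K_{r-2}$-density. In other words, the paper reduces the $\delta^{(a)}$ statement to the already-established $\delta^{(2,2)}$ statement via this minimum-degree computation in the co-neighborhood; this step is where the specific value $\frac{r-3}{r-2}$ enters, and it is precisely what your sketch is missing.

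On the lower bound: the construction is the $(r-2)$-partite Tur\'an graph with a high-chromatic triangle-free graph added inside one part (not a join with $K_{r-3}$), and the relevant co-neighborhood density is $\pi_b(K_{r-2})$, not $\pi_b(K_{r-3})$ --- your displayed formula is correct, but the surrounding text has the index off by one.
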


\medskip
{\bf \noindent Notations.} For a family $\mathcal{H}$ of graphs, we say $G$ is $\mathcal{H}$-free if $G$ does not contain any graph $H\in\mathcal{H}$ as a subgraph. For a vertex $u\in V(G)$, we will use $N_{G}(u)$ (or $N(u)$ if the subscript is clear) to denote the set of neighbors of $u$. For a pair of vertices $u,v\in V(G)$, we use $N_{G}(u,v)$ (or $N(u,v)$ if the subscript is clear) to denote the set of common neighbors of $u$ and $v$, that is, $N_{G}(u,v)=N_{G}(u)\cap N_{G}(v)$. For a subset $T\subseteq V(G)$, we will use $G[T]$ to denote the subgraph induced by $T$. For the sake of clarity of presentation, we omit
floors and ceilings and treat large numbers as integers whenever this does not affect the argument. All logarithms are natural logarithms with base $e$.

\medskip
{\bf \noindent Structure of this paper.} In~\cref{sec:GeometricConsequences}, we will build the connection between graph theory and abstract convexity spaces and prove~\cref{thm:pq-for-C}, which implies~\cref{thm:main0}. In~\cref{sec:Preliminary}, we will discuss the difference between the chromatic and homomorphism threshold problems for cliques and prove~\cref{thm:VC-notfor-hom}. We then prove~\cref{thm:main1,thm:main1-lowerbound} and~\cref{cor:GeneralThresholds} in~\cref{sec:ProofOfMainTheorem}. Concluding remarks are in~\cref{sec:ConcludingRemarks}.

\section{A geometric framework}\label{sec:GeometricConsequences}
In this section, first in~\cref{sec:conv-sp}, we introduce abstract convexity spaces and some useful parameters. Then in~\cref{sec:B}, we define a convexity space from a graph, prove some properties of this space, and establish some correspondence between the graph and this space. With this preparation, we are then in a position to lay out our geometric approach in~\cref{sec:pf-map} and list the main lemmas for proving~\cref{thm:main0}. The main lemmas are proved in~\cref{sec:VC-FracHelly} and~\cref{subsec:bi-ind-matching}. Finally,~\cref{thm:pq-for-C} is proved in~\cref{sec:pq-C}.

\subsection{Abstract convexity space}\label{sec:conv-sp}
A convexity space is a pair $(X,\C C)$ where $\C C\subseteq 2^X$ is a family of subsets satisfying
\begin{itemize}
    \item $\varnothing,X\in \C C$;
    \item $\C C$ is closed under intersection, i.e. for every $\C F\subseteq \C C$, $\bigcap \C F:=\bigcap_{F\in \C F}F\in \C C$.
\end{itemize}
We call sets in $\C C$ \Yemph{convex sets}. The \Yemph{convex hull} of a set of points $Y\subseteq X$ is the intersection of all convex sets in $\C C$ containing $Y$, which is also the minimal convex set containing $Y$. That is, $\M{conv}Y=\bigcap_{Y\subseteq F\in \C C}F$. When the set $X$ is clear from the content, we simply refer $\C C$ as a convexity space. For more on the theory of convexity space and combinatorial convexity, we refer the interested readers to the books of van de Vel~\cite{1993VandeVel} and of B\'ar\'any~\cite{2021BookCombConv}. Two trivial convexity spaces are $\C C=\{\varnothing,X\}$ and $\C C=2^X$. Let $\mathcal{C}^d$ be the family of all convex sets in $\I R^d$. Then $(\I R^d,\mathcal{C}^d)$ is the usual Euclidean convexity space. Here are some non-trivial examples.
\begin{itemize}
    \item \emph{Convex lattice sets}: A set of the form $C\cap \I{Z}^d$ for some convex set $C$ in $\C C^d$ is called a \Yemph{convex lattice set}. Then $(\I Z^d, \C C^d\cap \I{Z}^d)$ is a convexity space.

    \item \emph{Subgroups}: Given a group $G$ with identity $e$, then $(G\setminus \{e\},\{H\setminus\{e\}:H\le G\})$ is a convexity space. For a set $S\subseteq G\setminus \{e\}, {\rm conv}S=\left\langle S \right\rangle \setminus \{e\}$ is a subgroup generated by $S$ (with identity removed).

    \item \emph{Subcubes}. A subset $\C C\subseteq\{0,1\}^n$ is a \emph{subcube} if there exists a set of coordinates $I\subseteq[n]$ and a binary vector $\boldsymbol{v}\in\{0,1\}^I$ such that $\C C$ consists of all vectors in $\{0,1\}^n$ whose projection on $I$ is $\boldsymbol{v}$. Then $\{0,1\}^n$ together with the family of all subcubes forms a convexity space. 
    
    \item \emph{Subtrees}. Given a finite tree $T$ on vertex set $V$, then $V$ together with all its subtrees forms a convexity space.
\end{itemize}

Let us now introduce some invariants which are abstractions of properties of Euclidean convexity space.

\begin{defn}\label{defn:Radon}
The \Yemph{Radon number} of a convexity space $(X,\C C)$, denoted by $r(\C C)$, is the minimum integer $r$ such that for any set of points $Y\subseteq X$ with $|Y|\ge r$, there is a partition $Y=Y_1\cup Y_2$ with $\M{conv}Y_1\cap \M{conv}Y_2\neq\varnothing$.
\end{defn}

Radon's Lemma~\cite{1921Radon} states, in this notation, that the Radon number of the Euclidean convexity space $(\I R^d,\C C^d)$ is at most $d+2$.

\begin{defn}
 The \Yemph{Helly number} of a set system $\C F$, denoted by $h(\C F)$, is the minimum integer $h$ such that in any finite subfamily $\C G\subseteq \C F$, if every $h$-tuple of $\C G$ is intersecting, then $\C G$ is intersecting.
\end{defn}

Thus, Helly's theorem~\cite{1923Helly} implies that $h(\mathcal{C}^d)\le d+1$, which is less than its Radon number. This relation between Radon and Helly numbers holds for all convexity space $(X,\C C)$: $h(\C C)<r(\C C)$, as shown by Levi~\cite{1951Levi}. The gap between Radon number and Helly number, however, could be arbitrarily large. One such example is the space of subcubes in $\{0,1\}^n$, which has Helly number 2, but Radon number $\lfloor{\log_2(n+1)}\rfloor+1$.

We also need the fractional extension of Helly number defined as follows.

\begin{defn}\label{defn:frac-Helly-number}
The \Yemph{fractional Helly number} of a set system $\C F$, denoted by $h^*(\C F)$, is the smallest natural number $k$ such that for every $\alpha>0$, there exists $\beta=\beta(\alpha)>0$ such that the following holds. For every $\{F_{1},F_{2},\ldots,F_{m}\}\subseteq \C F$, if the number of intersecting $k$-tuples $I\in \binom{[m]}{k}$ with $\bigcap\limits_{i\in I}F_{i}\neq\varnothing$ is at least $\alpha\binom{m}{k}$, then $\{F_{1},F_{2},\ldots,F_{m}\}$ contains an intersecting subfamily of size at least $\beta m$.
\end{defn}

We remark that there is no direct relation between the Helly number and its fractional counterpart. For example, for the Euclidean convexity space, $h(\mathcal{C}^d)=h^*(\mathcal{C}^d)=d+1$; for the convex lattice set, $h(\C C^d\cap \I{Z}^d)=2^d$ and $h^*(\C C^d\cap \I{Z}^d)=d+1$~\cite{2003AdvConv,1973Jgeom}; for the family $\C B^d$ of all axis-aligned boxes in $\I R^d$, $h(\C B^d)=2$ and $h^*(\C B^d)=d+1$~\cite{1988Israel}.

The concept of (weak) $\eps$-net is well-studied in computational geometry, combinatorics, and machine learning; it is particularly useful in many algorithmic applications, including range searching and geometric optimization.

\begin{defn}
    Let $X$ be a set and $\C C\subseteq 2^X$ be a family of subsets of $X$. Given a finitely supported probability measure $\mu$ on $X$, a set $N\subseteq X$ is a \emph{weak $\eps$-net for $\C C$ with respect to $\mu$}, if $N \cap C\neq \varnothing$ for every $C \in \mathcal{C} $ with $\mu(C)\geq \varepsilon$. We say that $\C C$ has weak $\eps$-nets of size $m=m(\C C,\eps)$ if there is a weak $\eps$-net for $\C C$ with respect to $\mu$ of size at most $m$ for every finitely supported probability measure $\mu$ on $X$.
\end{defn}

Note that $m(\C C,\eps)$, the size of weak $\eps$-nets for $\C C$, does not depend on the choice of the probability measure $\mu$.

We need the following fractional version of transversal number.

\begin{defn}
A \Yemph{fractional transversal} for a set system $\mathcal{G}$ on ground set $V$ is a function $f$ from $V$ to $[0,1]$ such that for every set $A\in \mathcal{G}$, $\sum_{v\in A}f(v)\ge 1$. The size of $f$ is $\sum_{v\in V}f(v)$ and the \Yemph{fractional transversal number} of $\mathcal{G}$ is the minimum size of a fractional transversal for $\mathcal{G}$, denoted by $\tau^{*}(\mathcal{G})$. 
\end{defn}

In particular, the characteristic function of a transversal is a fractional transversal and so for any $\mathcal{G}$, we have $\tau^{*}(\mathcal{G})\le\tau(\mathcal{G})$. 

\begin{defn}
    A \Yemph{matching} in a set system $\mathcal{H}$ is a collection pairwise disjoint sets in $\mathcal{H}$. The \Yemph{matching number} of $\mathcal{H}$ is size of a largest matching in $\mathcal{H}$, denoted by $\nu(\mathcal{H})$. 
\end{defn}

\begin{defn}
    A \Yemph{fractional matching} in a set system $\mathcal{H}$ on ground set $V$ is a function $g$ from $\C H$ to $[0,1]$ such that for every $v\in V$, $\sum_{v\in A}g(A)\le 1$. The size of $g$ is $\sum_{A\in \C H}g(A)$ and the \Yemph{fractional matching number} of $\mathcal{H}$ is the maximum size of a fractional matching in $\mathcal{H}$, denoted by $\nu^{*}(\mathcal{H})$. 
\end{defn}
Similarly, $\nu(\mathcal{H})\le\nu^{*}(\mathcal{H})$ holds for every set system $\mathcal{H}$. Moreover, LP duality infers that $\nu^{*}(\mathcal{H})=\tau^{*}(\mathcal{H})$. Therefore, for any $\mathcal{H}$, we have $\nu(\mathcal{H})\le\nu^{*}(\mathcal{H})=\tau^{*}(\mathcal{H})\le\tau(\mathcal{H})$.

The \Yemph{Vapnik-Chervonenkis dimension} (\Yemph{VC-dimension} for short) is a parameter that measures the complexity of various combinatorial objects, and plays an important role in statistics, algebraic geometry, learning theory, and model theory.
For a set system $\mathcal{F}\subseteq 2^{X}$, the \Yemph{Vapnik-Chervonenkis dimension} of $\mathcal{F}$, denoted by $\VC(\mathcal{F})$, is the largest integer $d$ for which there exists a subset $S\subseteq X$ with $|S|=d$ such that for every subset $B\subseteq S$, one can find a member $A\in\mathcal{F}$ with $A\cap S=B$. In this case, we say that $S$ is \Yemph{shattered} by $\C F$.

The well-known $\eps$-net theorem of Haussler and Welzl~\cite{1987DCG} (also see the book of Matou\v{s}ek~\cite{2022BookMatousek}) provides an inverse inequality for transversal number and its fractional version in set systems with bounded VC-dimension. 

\begin{theorem}[$\eps$-net theorem]\label{thm:eps-net}
    There exists an absolute constant $C$ such that the following holds. Let $\C F$ be a set system with VC-dimension $d$. Then $\tau(\C F)\le Cd\tau^*(\C F)\log\tau^*(\C F)$.
\end{theorem}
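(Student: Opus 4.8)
The plan is to reduce the statement to the classical sampling version of the $\eps$-net theorem, and then recall how that sampling statement is proved. First I would pass from fractions to a measure. Let $t=\tau^*(\C F)$ and let $f\colon V\to[0,1]$ be an optimal fractional transversal, so that $\sum_{v\in V}f(v)=t$ while $\sum_{v\in A}f(v)\ge 1$ for every $A\in\C F$ (existence of such an $f$ is LP duality, and $\tau^*=\nu^*$). Define a finitely supported probability measure $\mu$ on $V$ by $\mu(v)=f(v)/t$. Then every $A\in\C F$ satisfies $\mu(A)\ge 1/t$. Consequently, an $\eps$-net for $\C F$ with respect to $\mu$ with $\eps=1/t$ is automatically a transversal of $\C F$: by definition it meets every set of $\mu$-measure at least $\eps$, and all sets of $\C F$ qualify. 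Hence it suffices to exhibit an $\eps$-net for $\C F$ with respect to $\mu$ of size $O(\tfrac{d}{\eps}\log\tfrac1\eps)=O(dt\log t)$.

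For this I would run the standard probabilistic argument. Draw $m=\Theta(\tfrac{d}{\eps}\log\tfrac1\eps)$ points $x_1,\dots,x_m$ independently from $\mu$, and show that with probability bounded away from $0$ the (multi)set $N=\{x_1,\dots,x_m\}$ hits every $A\in\C F$ with $\mu(A)\ge\eps$. For a fixed such heavy set $A$, the probability that $N$ misses $A$ is $(1-\mu(A))^m\le e^{-\eps m}$, which is tiny; the issue is that $\C F$ may be infinite, so a naive union bound fails. The resolution is the double-sampling (symmetrization) trick: take a second independent sample $y_1,\dots,y_m$ and bound instead the probability of the event that $N$ misses some heavy $A$ while the second sample hits that same $A$ at least $\eps m/2$ times. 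A union bound over the $2m$-point sample then only ranges over the trace of $\C F$ on those $2m$ points, whose cardinality is at most $(2m)^d$ by the Sauer--Shelah lemma applied to $\VC(\C F)=d$; the conditional probability of each sign-flip pattern is controlled by a Chernoff/binomial tail estimate. Optimizing $m$ gives the $\eps$-net of size $O(\tfrac{d}{\eps}\log\tfrac1\eps)$, and substituting $\eps=1/t$ yields $\tau(\C F)\le Cd\,\tau^*(\C F)\log\tau^*(\C F)$.

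The main obstacle is precisely the double-sampling step: replacing the union bound over the uncontrolled (possibly infinite) family of heavy sets by a union bound over the polynomially many distinct intersections of $\C F$ with a random $2m$-point set. This is the unique place where finiteness of the VC-dimension enters, via Sauer--Shelah, and getting the quantitative trade-off right is what pins down the $\frac{d}{\eps}\log\frac1\eps$ bound. Everything else — the passage from an optimal fractional transversal to the measure $\mu$, the observation that an $\eps$-net for $\mu$ with $\eps=1/\tau^*$ is a transversal, and the Chernoff estimates — is routine. (If one is content to cite the Haussler--Welzl sampling theorem as a black box, only the first paragraph is needed.)
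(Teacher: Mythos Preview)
Your proposal is a correct outline of the classical Haussler--Welzl argument: pass from an optimal fractional transversal to a probability measure, observe that a $(1/\tau^*)$-net is a transversal, and then prove the existence of small $\eps$-nets via double sampling and Sauer--Shelah. There is nothing wrong with it.

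However, there is nothing to compare it to. In the paper this theorem is not proved at all; it is quoted as a black box with a citation to Haussler--Welzl and to Matou\v{s}ek's book. So your write-up goes well beyond what the paper does. If your aim is only to match the paper, a one-line citation suffices; if you want to include a proof, what you have sketched is exactly the standard one (and, as you note yourself, the first paragraph alone already reduces the statement to the sampling $\eps$-net theorem, which is the form in which the result is usually cited).
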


\subsection{The unusual suspect}\label{sec:B}



In this subsection, we construct an abstract convexity space from a graph. Given a set system $\C F$, the \emph{intersection closure} of $\C F$, denoted by $\C F^{\cap}$, is the set system obtained from $\C F$ by taking all possible intersections of subfamilies of $\C F$. Given a graph $G$, let $\MIS(G)$ be the family of all maximal independent sets of $G$. 

We can now define our convexity space $\C C(G)$ from a given graph $G$ as follows:
\begin{itemize}
    \item Let $\mathcal{B}(G):=\{K_{v}:v\in V(G)\}$ be the set system on the ground set $\MIS(G)$ indexed by $V(G)$, where $K_{v}=\{I\in\MIS(G): v\in I\}$ consists of all maximal independent sets of $G$ containing $v$.

    \item Let $\C C(G)=\C B(G)^{\cap}$ be the intersection closure of $\C B(G)$. Then $(X,\C C(G))$ is a convexity space, where $X=\MIS(G)$.
\end{itemize}
Note that $\C B(G)$ and $\C C(G)$ could be a family of multi-sets. For example, if two vertices $u,v$ have the same neighborhood in $G$, then $K_u=K_v$. Equivalently, $\C C(G)$ can be defined as $\C C(G)=\{K_{S}:S\subseteq V(G)\}$, where for each subset $S\subseteq V(G)$, $K_{S}=\{I\in X :S\subseteq I\}$ is the set of all maximal independent sets containing $S$. The \Yemph{dual} of a set system $\C H$ is a set system obtained from $\C H$ by swapping the roles of ground elements and sets in $\C H$. Note that the dual of $\mathcal{B}(G)$ is the set system induced by $\MIS(G)$, which we denote by $\C M(G)=\{I: I\in\MIS(G)\}$.

The convex hull operator of this convexity space $(X,\C C(G))$ can be described as follows. Given a subset $Y=\{I_{1},\ldots,I_{m}\}\subseteq X$, $\M{conv}Y$ is the intersection of all convex sets in $\C C(G)$ containing $Y$, so $\M{conv}Y=K_{I_{1}\cap\cdots\cap I_{m}}$. 

In the rest of this subsection, we establish some correspondence between a graph $G$ and the set system $\C B(G)$, which is summarized in~\cref{tab:table}.

\begin{table}
    \centering
    \begin{tabular}{c|c}
        \toprule
        \textbf{The graph $G$ } & \textbf{Set syetem $\mathcal{B}=\mathcal{B}(G)$} \\
        \midrule
        $uv\in E(G)$ & $K_{u}\cap K_{v}=\emptyset$ \\
        \hline
        (maximal) independent set $I$ & (maximal) intersecting subfamily $\{K_{v}: v\in I\}$ \\
        \hline
        Chromatic number $\chi(G)$ & Transversal number $\tau(\mathcal{B})$ \\
        \hline
        Clique number $w(G)$ & Matching number $\nu(\mathcal{B})$ \\
        \hline
        $K_{r}$-freeness & $(r,2)$-property \\
        \bottomrule
    \end{tabular}
    \caption{Graph terminology vs. geometric terminology}
    \label{tab:table}
\end{table}

We first observe that $G$ is isomorphic to the disjointness graph of $\C B(G)$. Here, the \Yemph{disjointness graph} of a set system $\mathcal{F}$, denoted by $D(\mathcal{F})$, is the graph with vertex set $\mathcal{F}$ and a pair of sets in $\C F$ are adjacent in $D(\mathcal{F})$ if and only if they are disjoint. 
On the other hand, if we start with any set system $\C F$ and consider $\C B(D(\C F))$, we will get a set system whose nerves have the same 1-skeleton. The nerve of a set system is an abstract simplicial complex that records all intersecting subfamilies. In other words, $\C F$ and $\C B(D(\C F))$ have the same pairwise intersections. One can think the two operations $D(\cdot)$ and $\C B(\cdot)$ as dual of each other.

\begin{prop}\label{prop:dual-op}
    Given any graph $G$, we have $G\cong D(\mathcal{B}(G))$. On the other hand, given any set system $\mathcal{F}$, $\C F$ and $\C B(D(\C F))$ have the same pairwise intersections.
\end{prop}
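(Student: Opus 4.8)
The plan is to derive both halves of the proposition from one elementary observation: in an arbitrary graph $H$, a set $S\subseteq V(H)$ is contained in some maximal independent set of $H$ if and only if $S$ is independent in $H$ — one direction is immediate, the other is a greedy/Zorn extension. We will only need this for $|S|\le 2$. Throughout I treat $\C B(\cdot)$ as a family indexed by the vertex set (rather than as a genuine set), so that the maps below are honest bijections even when two vertices have equal neighbourhoods and hence equal $K_v$'s.

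For the first claim, consider the map $\phi\colon V(G)\to \C B(G)$ given by $v\mapsto K_v$; since the vertex set of $D(\C B(G))$ is $\C B(G)$ indexed by $V(G)$, it suffices to check that $\phi$ preserves (non-)adjacency, i.e. that $uv\in E(G)$ if and only if $K_u\cap K_v=\varnothing$. Now $K_u\cap K_v=\{I\in\MIS(G):\{u,v\}\subseteq I\}$ is precisely the collection of maximal independent sets of $G$ containing $\{u,v\}$, which by the observation above is nonempty exactly when $\{u,v\}$ is independent in $G$, that is, when $uv\notin E(G)$. Hence $K_u\cap K_v=\varnothing \iff uv\in E(G)$, and $\phi$ is a graph isomorphism, giving $G\cong D(\C B(G))$.

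For the second claim, let $\C F=\{F_j:j\in J\}$ be any set system and put $H:=D(\C F)$, whose vertex set is $\{F_j:j\in J\}$; then $\C B(H)=\{K_{F_j}:j\in J\}$ with $K_{F_j}=\{I\in\MIS(H):F_j\in I\}$. Fix distinct $i,j\in J$. By definition of the disjointness graph, $F_i$ and $F_j$ are adjacent in $H$ iff $F_i\cap F_j=\varnothing$; equivalently, $\{F_i,F_j\}$ is independent in $H$ iff $F_i\cap F_j\neq\varnothing$. On the other hand, $K_{F_i}\cap K_{F_j}$ is the set of maximal independent sets of $H$ containing both $F_i$ and $F_j$, which by the observation is nonempty iff $\{F_i,F_j\}$ is independent in $H$. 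Chaining the two equivalences yields $F_i\cap F_j\neq\varnothing \iff K_{F_i}\cap K_{F_j}\neq\varnothing$, which is exactly the assertion that the nerves of $\C F$ and $\C B(H)$ have the same $1$-skeleton, i.e. the same pairwise intersection pattern.

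\textbf{Main obstacle.} There is no real obstacle; the argument is a direct unwinding of definitions. The only points requiring a little care are the bookkeeping — keeping $\C B(\cdot)$ as an indexed family so the claimed maps are well-defined bijections (distinct vertices with equal neighbourhoods give equal $K_v$'s) — and the harmless degeneracies: every vertex of a graph lies in some maximal independent set, so all $K$-sets in play are nonempty, and the statement about ``pairwise intersections'' is naturally read on pairs of distinct members, i.e. on the $1$-skeleton of the nerve.
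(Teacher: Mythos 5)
Your proposal is correct and follows essentially the same route as the paper: both parts hinge on the observation that $K_u\cap K_v\neq\varnothing$ iff $\{u,v\}$ is independent, proved the same way, and your second part is just the paper's reduction to the first part (applied to $D(\C F)$) unwound into an explicit chain of equivalences.
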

\begin{proof}
    By the definition of $\C B(G)$, $K_u\cap K_v=\varnothing$ if and only if there is no maximal independent set containing both $u$ and $v$, in other words $uv\in E(G)$. Thus, mapping $K_v$ to $v$ for each $v\in V(G)$ is a graph isomorphism between $D(\mathcal{B}(G))$ and $G$.

    For the second part, we need to show that $\C F$ and $\C B(D(\C F))$ have the same pairwise intersections. This amounts to proving that $D(\C F)\cong D(\C B(D(\C F)))$, which follows from the first part.
\end{proof}

A simple but useful fact is that the operation: $G\rightarrow \C C(G)$ always produces a convexity space with Helly number 2.

\begin{prop}\label{prop:HellyNumber2}
    For any graph $G$ with at least one edge, $\C C(G)$ has Helly number $2$.
\end{prop}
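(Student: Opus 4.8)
The plan is to work directly with the explicit description $\C C(G)=\{K_S:S\subseteq V(G)\}$ recorded just above the statement, together with two elementary facts about these sets. First, for any $S,T\subseteq V(G)$ one has $K_S\cap K_T=K_{S\cup T}$, since a maximal independent set contains both $S$ and $T$ if and only if it contains $S\cup T$; iterating, $\bigcap_i K_{S_i}=K_{\bigcup_i S_i}$. Second, $K_S\neq\varnothing$ if and only if $S$ is an independent set of $G$: an independent set extends to a maximal independent set, while no independent set can contain a non-independent $S$. Setting up these two facts is essentially the whole argument.

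Given them, the bound $h(\C C(G))\le 2$ is immediate. Take a finite subfamily $\C G=\{K_{S_1},\dots,K_{S_m}\}\subseteq \C C(G)$ (the cases $m\le 1$ being trivial) in which every pair is intersecting. By the first fact $K_{S_i}\cap K_{S_j}=K_{S_i\cup S_j}$, so by the second fact the hypothesis says $S_i\cup S_j$ is independent for all $i,j$. I claim $S:=\bigcup_i S_i$ is then independent; by the two facts this gives $\bigcap_i K_{S_i}=K_S\neq\varnothing$, as required. Indeed, if $u,v\in S$ were adjacent, pick $i,j$ with $u\in S_i$ and $v\in S_j$; then $\{u,v\}\subseteq S_i\cup S_j$, contradicting independence of $S_i\cup S_j$ (when $i=j$ this already contradicts $K_{S_i}\neq\varnothing$, which holds since $m\ge 2$ forces $K_{S_i}$ to be a member of some intersecting pair).

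For the matching lower bound $h(\C C(G))\ge 2$, i.e.\ that the Helly number is not $1$, I use the hypothesis that $G$ has an edge $uv$. Then $K_u$ and $K_v$ are each nonempty, since every vertex lies in some maximal independent set, while $K_u\cap K_v=\varnothing$ by the $uv\in E(G)$ row of \cref{tab:table} (equivalently by the first fact, as $\{u,v\}$ is not independent). Hence the two-element subfamily $\{K_u,K_v\}$ witnesses that $h=1$ does not suffice, and combined with the previous paragraph we conclude $h(\C C(G))=2$.

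There is no genuinely hard step here: the content is entirely in the identity $K_S\cap K_T=K_{S\cup T}$ and the characterization of nonemptiness of $K_S$, after which ``Helly number $2$'' is just the observation that pairwise-independent unions glue into a single independent set. The only thing needing a word of care is the bookkeeping in the degenerate cases ($m\le 1$, or a ``pair'' with a repeated index), which I would dispose of in a single sentence.
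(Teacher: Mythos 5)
Your proof is correct and follows essentially the same route as the paper: both reduce pairwise intersection of the $K_{S_i}$'s to independence of $\bigcup_i S_i$ and then observe that any maximal independent set extending this union lies in the common intersection. The one genuine addition on your end is the explicit verification of the matching lower bound $h(\C C(G))\ge 2$ using the hypothesis that $G$ has an edge; the paper's proof only establishes the upper bound $h(\C C(G))\le 2$ and leaves the (easy) lower bound implicit, so your version is slightly more complete.
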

\begin{proof}
We need to show that for any subfamily $\C S=\{K_{S_i}: S_i\subseteq V(G)\}\subseteq \C C(G)$, if $\C S$ is pairwise intersecting, then it is in fact intersecting. Similarly to~\cref{prop:dual-op}, it is easy to see that if $K_S$ and $K_T$ intersect, then $S\cup T$ is an independent set. Thus, $\C S$ being pairwise intersecting infers that $S=\bigcup\limits_{S_i\in\C S}S_i$ is an independent set in $G$. Let $I$ be an arbitrary maximal independent set containing $S$, then $I\in\bigcap\C S$ as desired.
\end{proof}

\cref{prop:dual-op} in particular implies that there is a one-to-one correspondence between (maximal) independent sets $I$ in $G$ and subfamilies of $\C B(G)$ that are (maximally) pairwise intersecting; and as $\C B(G)$ has Helly number 2, all these subfamilies are intersecting.

A key correspondence is as follows.

\begin{prop}\label{prop:Chi=Tau}
For any graph $G$, we have $\chi(G)=\tau(\mathcal{B}(G))$.
\end{prop}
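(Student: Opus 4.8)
The plan is to establish both inequalities $\chi(G) \le \tau(\C B(G))$ and $\tau(\C B(G)) \le \chi(G)$ by translating colorings into transversals and back, using the defining correspondence between vertices of $G$ and the sets $K_v$, and between independent sets of $G$ and intersecting subfamilies of $\C B(G)$ (as recorded in \cref{tab:table} and already proved in \cref{prop:dual-op}).

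First I would show $\tau(\C B(G)) \le \chi(G)$. Take a proper coloring of $G$ with color classes $V_1, \ldots, V_k$ where $k = \chi(G)$; each $V_i$ is an independent set, so it extends to some maximal independent set $I_i \in \MIS(G)$. I claim $T = \{I_1, \ldots, I_k\} \subseteq \MIS(G)$ is a transversal for $\C B(G)$: given any $v \in V(G)$, $v$ lies in some color class $V_i$, hence $v \in V_i \subseteq I_i$, so $I_i \in K_v$, i.e. $T \cap K_v \neq \varnothing$. Thus $\tau(\C B(G)) \le k = \chi(G)$.

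Conversely, for $\chi(G) \le \tau(\C B(G))$, let $T = \{I_1, \ldots, I_m\} \subseteq \MIS(G)$ be a transversal for $\C B(G)$ of minimum size $m = \tau(\C B(G))$. For each $v \in V(G)$, since $T \cap K_v \neq \varnothing$, there is some $I_j$ with $v \in I_j$; assign to $v$ the color $c(v) := \min\{ j : v \in I_j\}$. This uses at most $m$ colors, and it is a proper coloring: if $c(u) = c(v) = j$ then $u, v \in I_j$, which is an independent set, so $uv \notin E(G)$. Hence $\chi(G) \le m = \tau(\C B(G))$.

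Combining the two inequalities gives $\chi(G) = \tau(\C B(G))$. I do not anticipate a genuine obstacle here — the statement is essentially a dictionary entry, and the only mild subtlety is remembering to extend an arbitrary independent set (a color class) to a \emph{maximal} one so that it indexes an element of the ground set $X = \MIS(G)$, and handling the (harmless) fact that $\C B(G)$ may be a multiset of sets, which does not affect either the transversal number or the chromatic number. One should also note the degenerate case where $G$ has no edges, where $\chi(G) = 1$ and any single maximal independent set (all of $V(G)$) is a transversal, so the identity still holds.
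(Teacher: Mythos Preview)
Your proof is correct and essentially identical to the paper's own argument: both directions are handled by the same translation (extend color classes to maximal independent sets to get a transversal; conversely, assign to each vertex the least index of a maximal independent set in the transversal containing it). The only difference is the order in which the two inequalities are presented.
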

\begin{proof}
Observe that a collection of maximal independent sets pierces $\C B(G)=\{K_{u}:u\in V(G)\}$ if and only if their union covers $V(G)$. We first show that $\chi(G)\le \tau(\C B(G))$.
Let $k=\tau(\C B(G))$, then there are $I_{1},I_{2},\ldots,I_{k}\in \MIS(G)$ such that they pierce $\C B(G)$. As $\bigcup_{j\in [k]}I_j=V(G)$, for every vertex $v\in V(G)$, we can color it with the smallest index $j\in[k]$ such that $v\in I_{j}$. This provides a proper $k$-coloring of $G$, and so $\chi(G)\le k$. 

To show $\tau(\mathcal{B}(G))\le\chi(G)$, suppose that $\chi(G)=r$. Then we can partition $V(G)$ into $r$ independent sets $V_{1}\cup V_{2}\cup\cdots\cup V_{r}$. For each $V_j$, $j\in[r]$, let $I_j\supseteq V_j$ be an arbitrary maximal independent set containing it. Then we have $\bigcup_{j\in[r]}I_j=V(G)$ and therefore $I_j$, $j\in[r]$, pierce $\C B(G)$, which implies that $\tau(\C B(G))\le r$. 
\end{proof}

\begin{rmk}\label{rmk:chi-bounded}
Using the correspondence in~\cref{tab:table}, it is not hard to see that~\cref{thm:main0} is equivalent to~\cref{thm:main-rephased}, which is an $(r,2)$-theorem for $\C B(G)$. A class of graphs are $\chi$-bounded if its chromatic number can be bounded by a function of its clique number. The correspondence in~\cref{tab:table} shows that~\cref{thm:main0}, the $(r,2)$-theorem for $\C B(G)$, can be rephrased as the statement that ultra maximal $K_r$-free graphs are $\chi$-bounded. Finally, note that~\cref{thm:pq-for-C} is a strengthening as it is an $(r,2)$-theorem for the convexity space $\C C(G)$, and the $(r,2)$-theorem holds for any family of convex sets in the convexity space, not just those of the family $\C B(G)$.
\end{rmk}

\subsection{Main lemmas and proof of~\cref{thm:main0}}\label{sec:pf-map}
In this subsection, we present our geometric framework, depicted in~\cref{fig:Relation}. By~\cref{prop:Chi=Tau}, our goal is to bound the transversal number of $\C B(G)$. Our approach consists of two sides. 

On the side of convexity space, to bound $\tau(\C B(G))$, by the $\eps$-net theorem,~\cref{thm:eps-net}, it suffices to bound the VC-dimension of $\C B(G)$ and the fractional transversal number $\tau^*(\C B(G))$. The latter can in turn be bounded by the fractional Helly number of $\C B(G)$. To this end, we prove in~\cref{lem:VC-Matching} and~\cref{lem:BoundedHellyNumber} that both the VC-dimension and the fractional Helly number of $\C B(G)$ can be controlled by certain induced matching defined as follows.

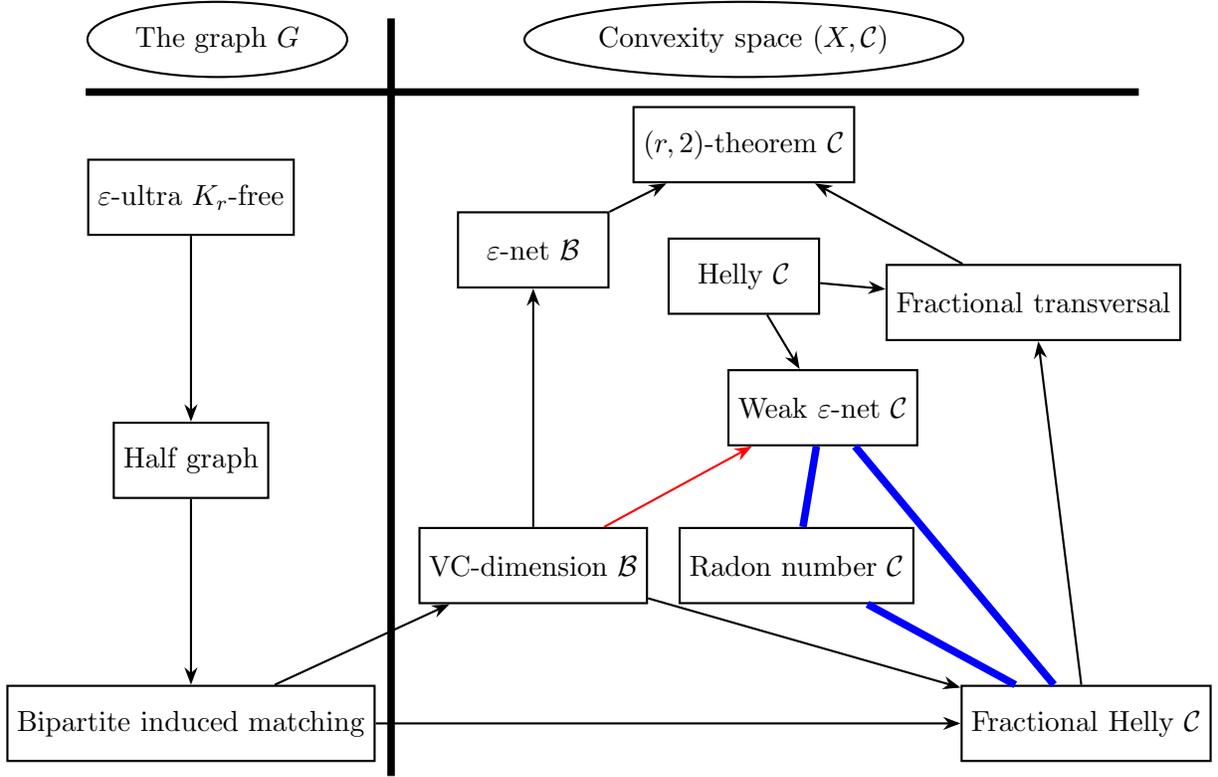
\begin{figure}
    \centering
    \begin{tikzpicture}[>=Stealth, line width=0.8pt,scale=0.7]

\node[draw, rectangle, minimum width=2cm, minimum height=1cm] (A) at (-2,0) {Bipartite induced matching};
\node[draw, rectangle, minimum width=2cm, minimum height=1cm] (B) at (-2,5) {Half graph};
\node[draw, rectangle, minimum width=2cm, minimum height=1cm] (C) at (-2,10) {$\eps$-ultra $K_{r}$-free};
\node[draw, rectangle, minimum width=2cm, minimum height=1cm] (D) at (4.5,3) {VC-dimension $\C B$};
\node[draw, rectangle, minimum width=2cm, minimum height=1cm] (E) at (4.5,9) {$\eps$-net $\C B$};
\node[draw, rectangle, minimum width=2cm, minimum height=1cm] (F) at (9.5,3) {Radon number $\C C$};
\node[draw, rectangle, minimum width=2cm, minimum height=1cm] (G) at (10,6) {Weak $\eps$-net $\C C$};
\node[draw, rectangle, minimum width=2cm, minimum height=1cm] (H) at (8.5,11) {$(r,2)$-theorem $\C C$};
\node[draw, rectangle, minimum width=2cm, minimum height=1cm] (I) at (14,8) {Fractional transversal};
\node[draw, rectangle, minimum width=2cm, minimum height=1cm] (J) at (15,0) {Fractional Helly $\C C$};
\node[draw, rectangle, minimum width=2cm, minimum height=1cm] (K) at (8.5,8.5) {Helly $\C C$};
\node[draw, ellipse, minimum width=1.5cm, minimum height=1cm] (L) at (-1.5,13) {The graph $G$};
\node[draw, ellipse, minimum width=5cm, minimum height=1cm] (M) at (8.5,13) {Convexity space $(X,\C C)$};

\draw[<-] (A) -- (B); 
\draw[->] (C) -- (B); 
\draw[->] (A) -- (D); 
\draw[->] (D) -- (E); 
\draw[->,red] (D) -- (G); 
\draw[->] (E) -- (H); 
\draw[->] (I) -- (H); 
\draw[->] (D) -- (J); 
\draw[->] (K) -- (I); 
\draw[->] (J) -- (I);
\draw[->] (A) -- (J);
\draw[->] (K) -- (G);

\draw[line width=1mm,blue] (F) -- (G); 
\draw[line width=1mm,blue] (G) -- (J); 
\draw[line width=1mm,blue] (F) -- (J); 
\draw[line width=1mm] (1.8,-1) -- (1.8,13.4);
\draw[line width=1mm] (-4,12) -- (16,12);
\end{tikzpicture}
    \caption{The relationship between graphs and convexity spaces}
    \label{fig:Relation}
\end{figure}

A matching $\{u_{i}v_{i}\}_{i\in[t]}$ in a graph $G$ is a \Yemph{bipartite induced matching} of size $t$ if $u_{i}v_{j}\in E(G)$ if and only if $i=j$. We call the size of a largest such matching in $G$ its \Yemph{bipartite induced matching number}, denoted by $\match(G)$. If we further require that both $\{u_{i}\}_{i\in[t]}$ and $\{v_{i}\}_{i\in[t]}$ are independent sets, then we call it an \Yemph{induced matching}.

\begin{lemma}\label{lem:VC-Matching}
    For any graph $G$, if $\VC(\C B(G))\ge 3$, then $\VC(\C B(G))\le \match(G)$.
\end{lemma}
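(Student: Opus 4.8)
The plan is to show that a shattered set in $\C B(G)$ of size $d \ge 3$ yields a bipartite induced matching of the same size $d$ in $G$. So suppose $S = \{I_1, \dots, I_d\} \subseteq X = \MIS(G)$ is shattered by $\C B(G)$, where each $I_j$ is a maximal independent set of $G$. Recall that the sets of $\C B(G)$ are exactly the $K_v = \{I \in \MIS(G) : v \in I\}$ for $v \in V(G)$. Being shattered means: for every subset $B \subseteq S$ there is a vertex $v_B \in V(G)$ with $K_{v_B} \cap S = B$, i.e.\ $v_B$ lies in exactly those maximal independent sets among $I_1, \dots, I_d$ that are indexed by $B$. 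The idea is to read off the matching from the singleton complements. For each $i \in [d]$, let $B_i = S \setminus \{I_i\}$ and pick a witness vertex $u_i := v_{B_i}$, so that $u_i \in I_j$ for all $j \ne i$ but $u_i \notin I_i$. The claim will be that $\{u_i : i \in [d]\}$, suitably paired with another family, forms the desired bipartite induced matching.

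First I would analyze adjacencies among the $u_i$. Since $u_i, u_j \in I_k$ for any $k \notin \{i,j\}$ (such a $k$ exists because $d \ge 3$), and $I_k$ is independent, we get $u_i u_j \notin E(G)$: the set $\{u_1, \dots, u_d\}$ is independent. Next, for the partners, use the full-complement witness: let $w$ be a vertex with $K_w \cap S = S$, i.e.\ $w \in I_j$ for all $j$; but this alone isn't enough to build a matching, so instead I'd look for partners $v_i$ using singleton subsets $\{I_i\}$: pick $v_i$ with $K_{v_i} \cap S = \{I_i\}$, so $v_i \in I_i$ and $v_i \notin I_j$ for $j \ne i$. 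Then $\{v_1,\dots,v_d\}$ is also independent by the same argument (any two $v_i, v_j$ with $i\ne j$ — hmm, they need a common $I_k$; but $v_i \notin I_k$ unless $k = i$, so this argument fails). I'd therefore pair $u_i$ with $v_i$ differently: since $u_i \notin I_i$ and $I_i$ is a \emph{maximal} independent set, $u_i$ has a neighbor in $I_i$; and $v_i \in I_i$. The cleanest route is to show $u_i v_j \in E(G) \iff i = j$ directly. For $i \ne j$: $u_i \in I_j$ and $v_j \in I_j$, both in the independent set $I_j$, so $u_i v_j \notin E(G)$. For $i = j$: here I need $u_i v_i \in E(G)$, which requires choosing $v_i$ more carefully — not just any witness for $\{I_i\}$, but one adjacent to $u_i$. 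Since $u_i \notin I_i$ and $I_i$ is maximal, $N(u_i) \cap I_i \ne \varnothing$; I'd like some element of $N(u_i) \cap I_i$ to be a witness for a subset of $S$ avoiding all $I_j$, $j \ne i$ — but arbitrary elements of $I_i$ need not be witnesses at all.

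The main obstacle, then, is precisely this last point: engineering the matched pairs so that $u_i v_i \in E(G)$ while keeping $u_i v_j \notin E(G)$ for $j \ne i$. I expect the fix is to not use $v_i$ as a separate witness but to observe that shattering gives witnesses for \emph{all} subsets, and then argue by maximality of the $I_i$: take $v_i \in N(u_i) \cap I_i$ arbitrary (exists by maximality of $I_i$), and check $u_i v_j$ for $j \ne i$ — we have $u_i \in I_j$ and $v_i \in I_i$, but we do \emph{not} automatically know $v_i \in I_j$, so $u_i v_j \notin E(G)$ is not immediate. To force it, I'd instead choose $v_i$ to lie in $I_j$ for all $j \ne i$ as well: i.e.\ $v_i \in N(u_i) \cap I_i \cap \bigcap_{j \ne i} I_j$. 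Such a $v_i$ exists provided $N(u_i) \cap I_i \cap \bigcap_{j\ne i} I_j \ne \varnothing$; one shows $\bigcap_j I_j$ together with $u_i$'s neighborhood in $I_i$ interact correctly — and this is where $d \ge 3$ and the shattering of the full set $S$ (giving $w \in \bigcap_j I_j$) should be leveraged, possibly replacing $u_i$ by $w$ or arguing that $w$ has a neighbor in $I_i$. I would carry the argument out in this order: (1) extract witnesses $u_i$ for $S \setminus \{I_i\}$; (2) prove $\{u_i\}$ independent using $d \ge 3$; (3) for each $i$, use maximality of $I_i$ to locate a partner $v_i$ inside $I_i$ adjacent to $u_i$, arranging $v_i \in \bigcap_{j \ne i} I_j$ by a further application of shattering/maximality; (4) verify $u_i v_j \in E(G) \iff i = j$; (5) conclude $\match(G) \ge d = \VC(\C B(G))$. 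The delicate step (3) is the crux; everything else is bookkeeping with independent sets.
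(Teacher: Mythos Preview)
Your approach is essentially the paper's, but you talked yourself out of the correct argument by a bookkeeping slip. Having chosen $u_i$ as a witness for $S \setminus \{I_i\}$ and then an \emph{arbitrary} $v_i \in N(u_i) \cap I_i$ (which exists by maximality of $I_i$), you went to verify $u_i v_j \notin E(G)$ for $j \ne i$ and wrote ``$v_i \in I_i$, but we do not automatically know $v_i \in I_j$''. The relevant vertex here is $v_j$, not $v_i$: by construction $v_j \in I_j$, and also $u_i \in I_j$ since $j \ne i$; both lie in the independent set $I_j$, so $u_i v_j \notin E(G)$. That is the whole argument, and there is no need to arrange $v_i \in \bigcap_{j \ne i} I_j$. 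This is precisely what the paper does (with the roles of the labels $u$ and $v$ swapped).

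The only remaining work is verifying that the $2d$ vertices are distinct, and this is where your observation that $\{u_i\}$ is independent (via $d \ge 3$) pays off: if $v_j = u_i$ with $i \ne j$, then $u_j v_j = u_j u_i \in E(G)$ contradicts that independence; the case $i=j$ gives a loop; and $v_i = v_j$ (resp.\ $u_i = u_j$) for $i \ne j$ is ruled out by $u_i v_i \in E(G)$ while $u_i v_j \notin E(G)$ (resp.\ $u_i \in I_j$ while $u_j \notin I_j$). So your step (3), which you flagged as the crux, is in fact immediate once the indexing error is fixed, and the proof goes through exactly along the paper's lines.
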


We remark that the bound on the VC-dimension above is optimal. Indeed, consider the graph $G$ with four vertices $x,y,z,w$, where $x,y,z$ form a copy of triangle, and $w$ is an isolated vertex. Then it is easy to check that $\C B(G)$ has VC-dimension 2, however $\match(G)=1$.

\begin{lemma}\label{lem:BoundedHellyNumber}
For any graph $G$, $h^{*}(\mathcal{B}(G))\le \match(G)+1$.
\end{lemma}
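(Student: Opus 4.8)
**Proof proposal for Lemma~\ref{lem:BoundedHellyNumber} ($h^*(\mathcal{B}(G)) \le \nu_{\mathrm{bi}}(G)+1$):**

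The plan is to verify that $k = \nu_{\mathrm{bi}}(G)+1$ satisfies the definition of fractional Helly number for $\mathcal{B}(G)$. So I fix $\alpha>0$, take a finite subfamily $\{K_{v_1},\dots,K_{v_m}\}\subseteq\mathcal{B}(G)$ (allowing repeats, i.e.\ treating it as a multiset of vertices $v_1,\dots,v_m$), and assume that at least $\alpha\binom{m}{k}$ of the $k$-subsets $I\in\binom{[m]}{k}$ are intersecting, meaning $\bigcap_{i\in I}K_{v_i}\neq\varnothing$. By the correspondence in Proposition~\ref{prop:dual-op} (and Helly number $2$ from Proposition~\ref{prop:HellyNumber2}), a set of indices $I$ is intersecting exactly when $\{v_i : i\in I\}$ is an independent set in $G$. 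The goal is to extract an intersecting subfamily of size $\beta m$, i.e.\ an independent set among $\beta m$ of the $v_i$'s.

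First I would observe the crucial structural fact: \emph{the family of independent sets of size $k$ among $v_1,\dots,v_m$ has no "$k$-partite sunflower-free" obstruction of large size}, which is really a statement that $G$ restricted to any edge-dense configuration contains a large bipartite induced matching. Concretely, I plan to apply the \textbf{fractional Helly theorem for set systems of bounded Radon number} — or more directly, the classical result (Kalai--Matou\v{s}ek--Wegner style, or the ``$(d+1)$-collapsibility implies fractional Helly number $d+1$'' machinery) that the fractional Helly number is bounded above by the Radon number. But since we are working with $\mathcal{B}(G)$ directly, the cleanest route is: the nerve of $\{K_{v_1},\dots,K_{v_m}\}$ is exactly the independence complex of the (multi)graph on $\{v_1,\dots,v_m\}$, and this complex is \emph{$k$-Leray} (equivalently has no induced copy of the boundary of a ``bad'' simplex beyond dimension $k-1$) precisely when $G$ has bipartite induced matching number $\le k-1$. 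The key combinatorial lemma to prove is therefore: if $G$ has $\nu_{\mathrm{bi}}(G)\le k-1$, then the independence complex of any induced subgraph has all reduced homology vanishing in dimension $\ge k-1$ — or, avoiding topology entirely, that a ``large'' $k$-uniform independence family forces a bipartite induced matching of size $k$, giving a contradiction. Then the fractional Helly conclusion follows from the Kalai--Eckhoff theorem relating Leray number to fractional Helly number.

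To keep the argument elementary and self-contained, I would instead argue combinatorially via a counting/sunflower argument. Suppose for contradiction that the largest independent subset among $v_1,\dots,v_m$ has size less than $\beta m$; then by a Ramsey/Turán-type argument the number of independent $k$-sets is much smaller than $\alpha\binom{m}{k}$ once $\beta$ is chosen appropriately in terms of $\alpha$ and $k$ — here I would use that $k = \nu_{\mathrm{bi}}(G)+1$ bounds the VC-dimension-like complexity, or more precisely invoke a fractional Helly bound for the independence complex that was essentially established in the companion Lemma~\ref{lem:VC-Matching}'s circle of ideas. The cleanest formal statement to cite is that the independence complex of a graph with bipartite induced matching number $t$ is $(t+1)$-collapsible, hence has fractional Helly number at most $t+1$ by Wegner's theorem; translating through the dual gives exactly $h^*(\mathcal{B}(G)) \le t+1 = \nu_{\mathrm{bi}}(G)+1$.

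The main obstacle I anticipate is establishing that $\nu_{\mathrm{bi}}(G) \le t$ forces the independence complex to be $(t+1)$-collapsible (or $(t+1)$-Leray): one must show that whenever a face cannot be extended, a suitable small ``link'' has bounded dimension, and the extremal configuration that obstructs collapsibility is exactly a bipartite induced matching of size $t+1$. Identifying that obstruction and ruling it out under the hypothesis is the crux; the passage from collapsibility to fractional Helly is then a black-box application of Wegner's/Kalai's theorem, and the dualization back to $\mathcal{B}(G)$ is routine given Propositions~\ref{prop:dual-op} and~\ref{prop:HellyNumber2}.
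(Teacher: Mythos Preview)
Your translation of the problem is correct: intersecting subfamilies of $\mathcal{B}(G)$ correspond to independent sets of $G$, so the fractional Helly statement for $\mathcal{B}(G)$ with parameter $k=\nu_{\mathrm{bi}}(G)+1$ becomes exactly the assertion that many independent $k$-sets in an induced subgraph force a linear-size independent set. This is the right reformulation.

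Where your proposal diverges from the paper is in how to finish. The paper does not go through Leray numbers, collapsibility, or Wegner/Kalai--Eckhoff machinery at all. It simply observes that since an induced matching is in particular a bipartite induced matching, a graph with $\nu_{\mathrm{bi}}(G)=k-1$ has no \emph{induced} matching of size $k$, and then quotes Holmsen's combinatorial theorem (\cite{2020Holmsen}, stated in the paper as Theorem~\ref{thm:HolmsenInducedFree}): if an $n$-vertex graph with no induced matching of size $k$ has $\alpha\binom{n}{k}$ independent $k$-sets, then it has an independent set of size $\beta n$. That is the entire argument. The paper also remarks in passing that one could alternatively combine Matou\v{s}ek's result (bounded dual VC-dimension $\Rightarrow$ bounded fractional Helly number) with Lemma~\ref{lem:VCdualBGBiIndMatching}.

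Your proposed route has a genuine gap: the claim that ``the independence complex of a graph with $\nu_{\mathrm{bi}}(G)\le t$ is $(t{+}1)$-collapsible (or $(t{+}1)$-Leray)'' is precisely the hard part, and you do not prove it---you only identify it as ``the crux'' and ``main obstacle.'' Establishing that statement is essentially equivalent in difficulty to Holmsen's theorem itself; it is not a routine verification, and the extremal obstruction analysis you gesture at is not carried out. Likewise, the ``counting/sunflower'' paragraph does not give a mechanism for why few large independent sets follows from no bipartite induced matching of size $k$. So as written the proposal is a plan with the decisive step left open, whereas the paper closes that step by a single black-box citation to Holmsen.
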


We have then to control the bipartite induced matching number on the graph side. Recall that an $n$-vertex $K_{r}$-free graph $G$ is $\varepsilon$-ultra maximal $K_{r}$-free if for every non-adjacent pair of vertices $u,v\in V(G)$, the induced subgraph $G[N(u)\cap N(v)]$ contains $\varepsilon n^{r-2}$ copies of $K_{r-2}$. 

We show that all $\varepsilon$-ultra maximal $K_{r}$-free graphs have bounded bipartite induced matching number.

\begin{theorem}\label{thm:BoundedInducedMatching}
     For $\eps>0$ and $r\ge 3$, every $\eps$-ultra maximal $K_r$-free graph $G$ satisfies 
     $$\match(G)\le (2/\eps)^{2/\eps}.$$
\end{theorem}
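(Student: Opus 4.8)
The plan is to argue by contradiction: suppose $G$ is an $n$-vertex $\eps$-ultra maximal $K_r$-free graph containing a bipartite induced matching $\{u_iv_i\}_{i\in[t]}$ with $t=(2/\eps)^{2/\eps}$. The defining feature of such a matching is that for $i\neq j$, the pair $u_i,v_j$ is non-adjacent, so the ultra-maximality hypothesis applies to it: $G[N(u_i)\cap N(v_j)]$ contains at least $\eps n^{r-2}$ copies of $K_{r-2}$. In particular each such co-neighborhood is large, $|N(u_i)\cap N(v_j)|\ge (\eps)^{1/(r-2)}n$ roughly, and more importantly it carries a positive density of $(r-2)$-cliques. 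The idea is to use these many cliques together with the matching edges $u_iv_i$ to build a copy of $K_r$, contradicting $K_r$-freeness; the combinatorial heart is to find an index set on which the cliques "cohere" enough to combine.

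Here is the intended structure. First, for each ordered pair $(i,j)$ with $i\neq j$, pick a $K_{r-2}$, call it $Q_{ij}$, inside $N(u_i)\cap N(v_j)$; together with $u_i$ and $v_j$ this is almost a $K_r$ — what is missing is the edge $u_iv_j$, which is precisely absent. So a single pair never gives a clique; we need to chain two matching edges. The key observation should be: if we can find indices $i\neq j$ and an $(r-2)$-clique $Q$ that lies simultaneously in $N(u_i)\cap N(v_j)$ \emph{and} in $N(u_j)\cap N(v_i)$ — equivalently $Q\subseteq N(\{u_i,v_i,u_j,v_j\})$ — then, since $u_iv_i$ and $u_jv_j$ are edges but we want $K_r$, we instead note $Q\cup\{u_i,v_i\}$ is a $K_r$ provided $Q\subseteq N(u_i)\cap N(v_i)$; but $u_i,v_i$ need not have been the non-adjacent pair. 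The cleaner route: for a \emph{non}-edge like $u_iv_j$, a $K_{r-2}$ in its co-neighborhood plus $u_i,v_j$ forms a $K_r$ \emph{minus one edge}; adding any single edge inside would finish it, but there is none. So instead we should look for an $(r-1)$-clique. Concretely, I would iterate: $G[N(u_i)\cap N(v_j)]$ has $K_{r-2}$-density $\ge\eps$, hence by a supersaturation/averaging step there is a vertex $w$ in it whose neighborhood within still contains $\ge \eps' $ density of $K_{r-3}$'s, and so on — peeling off one vertex at a time and restricting the measure. After $k$ rounds we find a $k$-clique $S$ inside $N(u_i)\cap N(v_j)$ such that $N(S)\cap N(u_i)\cap N(v_j)$ still has $K_{r-2-k}$-density bounded below by roughly $\eps/2^{\text{something}}$, as long as $\eps/2^{k}$ stays positive, i.e. for about $\log(1/\eps)$ steps.

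The decisive step, and the one I expect to be the main obstacle, is turning "many pairs, each with a dense co-neighborhood" into "one $K_r$". The mechanism I would use is a \textbf{sunflower / common-refinement argument on the matching vertices combined with the non-edges}: consider the non-edges $u_iv_j$ ($i\neq j$); there are $t(t-1)$ of them, each contributing a $K_{r-2}$ in a large co-neighborhood. Greedily build a large clique $K$ among the $v_i$'s? No — the $v_i$'s need not be independent or a clique. Instead, fix $u_1$; then $u_1$ is non-adjacent to every $v_j$ with $j\neq 1$, so $\bigcap_{j\ge 2}$ of the co-neighborhoods is too strong to expect, but by a density argument (iterated as above, now over the $j$ index rather than inside a single co-neighborhood) we can find a single $(r-2)$-clique $Q$ and a set of indices $J$ with $|J|\ge \log_2(1/\eps)$-ish such that $Q\subseteq N(u_1)\cap N(v_j)$ for all $j\in J$; this costs a factor $2^{-|J|}$ in density at each index, so $t\ge (2/\eps)^{2/\eps}$ is chosen exactly large enough that this greedy process survives $2/\eps$ rounds — hence $|J|$ can be made $\ge 2$ with room to spare, in fact one shows the process forces some $j\in J$ with $j\neq 1$ for which additionally $Q\subseteq N(u_j)$, and then $Q\cup\{u_1,u_j\}$ together with the edge structure yields $K_r$ once we check $u_1u_j$: but $u_1u_j$ need not be an edge either. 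The genuinely correct combination: $Q\subseteq N(u_i)\cap N(v_j)$ means $Q\cup\{v_j\}$ is an $(r-1)$-clique with $u_i$ adjacent to all of $Q$; and $u_iv_i\in E(G)$ with $v_i$ adjacent to $Q$ as well would give $Q\cup\{u_i,v_i\}=K_r$ — so what we actually need is one index $i$ and an $(r-2)$-clique $Q$ with $Q\subseteq N(u_i)\cap N(v_i)$, which we extract by the density iteration applied to the co-neighborhoods of the \emph{non}-edges $u_iv_j$ and then transferring to $v_i$ via the abundance of such cliques (a clique counted $\eps n^{r-2}$ times across $\Theta(t)$ neighborhoods must, by pigeonhole once $t > 1/\eps$, land in the neighborhood of some $v_i$). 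Quantifying this pigeonhole with the $2^{-k}$ density losses from the peeling gives the bound $t\le (2/\eps)^{2/\eps}$. The obstacle is bookkeeping the density losses so that the exponent comes out as exactly $2/\eps$ rather than something larger; I would absorb the dependence on $r$ into lower-order terms and track only the dominant $\eps$-dependence, and if the clean pigeonhole fails I would fall back on a Ramsey-type argument on the auxiliary graph whose vertices are the $t$ matching edges and whose coloring records which co-neighborhood cliques are shared, which trades a tower-type bound in $r$ for the same $\eps$-dependence — still acceptable since the statement only claims $(2/\eps)^{2/\eps}$ with $r$ hidden.
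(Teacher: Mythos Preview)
Your proposal correctly identifies the basic tool (pigeonhole: a single $K_{r-2}$ lies in many co-neighborhoods once there are $\gg 1/\eps$ non-edges), but it has a genuine gap: you never manage to complete a $K_r$. Every time you try, the edge you need is absent. With a bipartite induced matching $\{u_iv_i\}$, the \emph{only} guaranteed edges are the matching edges $u_iv_i$, so the only way to get $K_r$ directly is to find some $K_{r-2}\subseteq N(u_i)\cap N(v_i)$. Your pigeonhole gives a clique $Q\subseteq N(u_1)\cap \bigcap_{j\in J}N(v_j)$ for a large $J\subseteq\{2,\dots,t\}$, but there is no mechanism forcing $1\in J$ (indeed $1$ was excluded from the start), so ``transferring to $v_i$'' is exactly the step that does not go through. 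The Ramsey fallback does not help either: the obstruction is structural, not quantitative.

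What you are missing is that the matching edge should be used \emph{negatively}, not positively. After pigeonhole gives $Q\subseteq N(u_1)\cap N(v_j)$ for many $j$, observe that $Q\subseteq N(v_1)$ is \emph{impossible} in full: since $u_1v_1\in E(G)$, if $v_1$ were adjacent to all of $Q$ then $Q\cup\{u_1,v_1\}$ would be a $K_r$. Hence some vertex $y_1\in Q$ satisfies $y_1v_1\notin E(G)$ while $y_1v_j\in E(G)$ for all $j\in J$. This is one column of a \emph{half graph}. Iterating this step on the surviving index set $J$ (using $u_2v_2$, then $u_3v_3$, \dots, each time shrinking $J$ by a factor $\eps/2$) builds, after $k=\log t/\log(2/\eps)$ rounds, vertices $y_1,\dots,y_k$ with $v_iy_i\notin E(G)$ and $v_iy_j\in E(G)$ for $j<i$. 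Now run the pigeonhole \emph{once more} on the $k$ non-edges $\{v_i,y_i\}$: a common $K_{r-2}$ for two of them, say $(v_i,y_i)$ and $(v_j,y_j)$ with $j<i$, together with the half-graph edge $v_iy_j$ gives $K_r$. This second pigeonhole needs $k>1/\eps$, which forces $t>(2/\eps)^{1/\eps+1}$, yielding the stated bound. The two-stage structure (matching $\to$ half graph $\to$ $K_r$) is the idea your proposal is circling but never lands on.
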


We will prove~\cref{thm:BoundedInducedMatching} in two steps. First, we show that in an $\eps$-ultra $K_r$-free graph $G$, there does not exist a large copy of certain half graph, see~\cref{lemma:NoHalfGraph}. From not containing a large half graph, we can then derive that $G$ cannot have a large bipartite induced matching, see~\cref{lemma:ObstructionToIDM}.

Combining~\cref{lem:VC-Matching,lem:BoundedHellyNumber} and~\cref{thm:BoundedInducedMatching}, we get the following.

\begin{cor}\label{cor:ultra-boundedVCandFractHelly}
    Let $\eps>0$, $r\ge 3$ and $G$ be an $\eps$-ultra maximal $K_r$-free graph. Then both the VC-dimension and fractional Helly number of $\C B(G)$ are at most $(2/\eps)^{2/\eps}+1$.
\end{cor}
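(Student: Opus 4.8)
The statement to prove is Corollary~\ref{cor:ultra-boundedVCandFractHelly}, which is an immediate combination of three already-established results: Lemma~\ref{lem:VC-Matching}, Lemma~\ref{lem:BoundedHellyNumber}, and Theorem~\ref{thm:BoundedInducedMatching}. So the "proof" is really just a short assembly argument, and I should present it that way rather than reproving anything.

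\medskip

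\noindent\textbf{Proof proposal.} The plan is simply to chain together the three cited results. First I would invoke Theorem~\ref{thm:BoundedInducedMatching}: since $G$ is $\eps$-ultra maximal $K_r$-free, its bipartite induced matching number satisfies $\match(G)\le (2/\eps)^{2/\eps}$. Next, for the fractional Helly number, Lemma~\ref{lem:BoundedHellyNumber} gives $h^*(\C B(G))\le \match(G)+1\le (2/\eps)^{2/\eps}+1$ directly, with no case analysis needed. For the VC-dimension, I would split into two trivial cases: either $\VC(\C B(G))\le 2$, in which case the bound $(2/\eps)^{2/\eps}+1$ holds vacuously (note $(2/\eps)^{2/\eps}\ge 1$ whenever $\eps\le 2$, and for $\eps>2$ the clique-density hypothesis forces $r=3$ and the statement is easily checked, or one simply observes the bound is $\ge 2$ in the regime of interest); or $\VC(\C B(G))\ge 3$, in which case Lemma~\ref{lem:VC-Matching} applies and yields $\VC(\C B(G))\le \match(G)\le (2/\eps)^{2/\eps}\le (2/\eps)^{2/\eps}+1$. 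Either way both parameters are at most $(2/\eps)^{2/\eps}+1$, which is the claim.

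\medskip

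\noindent\textbf{Where the real work is.} There is no genuine obstacle in the corollary itself — all the difficulty has been front-loaded into Theorem~\ref{thm:BoundedInducedMatching} (whose proof goes through the half-graph intermediary: Lemma~\ref{lemma:NoHalfGraph} rules out large half graphs in $\eps$-ultra $K_r$-free graphs, and Lemma~\ref{lemma:ObstructionToIDM} converts the absence of a large half graph into the absence of a large bipartite induced matching) and into the two structural lemmas about $\C B(G)$. The only genuinely delicate point at the level of this corollary is making sure the edge case $\VC(\C B(G))\le 2$ is handled cleanly, since Lemma~\ref{lem:VC-Matching} is stated only under the hypothesis $\VC(\C B(G))\ge 3$ (and indeed the remark after that lemma shows the bound can fail without it, e.g. a triangle plus an isolated vertex). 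I would therefore write the VC-dimension part explicitly as a two-line case distinction so the reader sees the hypothesis $\VC\ge 3$ is being used, rather than glossing over it.

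\medskip

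\noindent\textbf{Write-up.} Concretely, the corollary's proof would read: "By Theorem~\ref{thm:BoundedInducedMatching}, $\match(G)\le (2/\eps)^{2/\eps}$. Lemma~\ref{lem:BoundedHellyNumber} then gives $h^*(\C B(G))\le \match(G)+1\le (2/\eps)^{2/\eps}+1$. For the VC-dimension, if $\VC(\C B(G))\le 2$ the claimed bound holds trivially; otherwise $\VC(\C B(G))\ge 3$ and Lemma~\ref{lem:VC-Matching} yields $\VC(\C B(G))\le \match(G)\le (2/\eps)^{2/\eps}+1$. This completes the proof." That is all that is needed; no computation beyond substituting the bound from Theorem~\ref{thm:BoundedInducedMatching} is involved.
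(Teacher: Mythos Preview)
Your proposal is correct and matches the paper's approach exactly: the paper simply states that the corollary follows by combining Lemma~\ref{lem:VC-Matching}, Lemma~\ref{lem:BoundedHellyNumber}, and Theorem~\ref{thm:BoundedInducedMatching}, without even writing out a proof. Your explicit handling of the edge case $\VC(\C B(G))\le 2$ (needed because Lemma~\ref{lem:VC-Matching} assumes $\VC\ge 3$) is a welcome bit of care that the paper leaves implicit.
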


\begin{rmk}\label{rmk:Matousek}
   We would like to mention a related $(p,q)$-theorem for set systems with bounded VC-dimension due to Matou\v{s}ek~\cite{2004DCG}. It states that if the VC-dimension of the dual of a set system $\C F$ is at most $k-1$, then  any finite $\C G\subseteq \C F$ with $(r,k)$-property, $r\ge k$, satisfies $\tau(\C G)=O_{r,k}(1)$. Notice that even though we do have a bound on the VC-dimension of dual of $\C B(G)$, that is, $\VC(\C M(G))$ (from~\cref{cor:ultra-boundedVCandFractHelly} or~\cref{lem:VCdualBGBiIndMatching}). However, we cannot apply Matou\v{s}ek's result to bound $\tau(\C B(G))$ directly. Indeed, by~\cref{tab:table}, the $K_r$-freeness of $G$ corresponds to $(r,2)$-property of $\C B(G)$. To utilize Matou\v{s}ek's result, we would need to (i) either show the $(r,k)$-property for $\C B(G)$, (ii) or prove that $\C M(G)$ has VC-dimension $1$. Having (i) for any $k>2$ would require a stronger hypothesis that in $G$, there is an independent set of size $k$ in a subgraph induced by any set of $r$ vertices. We cannot hope to have (ii) either: for $\eps$-ultra maximal $K_r$-free graph $G$, $\C M(G)$ could have VC-dimension $2^{\Omega(1/\eps)}$, see e.g. the construction in~\cref{thm:main1} and~\cref{sec:lowerbound-blowup}.
\end{rmk}

To prove~\cref{thm:main0}, we are left to bound the fractional transversal number of $\C B(G)$.

\begin{lemma}\label{lem:UltraToFracTau}
   Let $\eps>0$, $r\ge 3$ and $G$ be an $\eps$-ultra maximal $K_r$-free graph. Then $\tau^{*}(\mathcal{B}(G))=O_{\eps,r}(1)$.
\end{lemma}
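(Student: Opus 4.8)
The plan is to bound $\tau^*(\C B(G))$ by first invoking a fractional Helly statement for $\C B(G)$ and then combining it with a suitable packing/counting bound. Recall from \cref{prop:dual-op} and the Helly number~$2$ property (\cref{prop:HellyNumber2}) that a subfamily $\{K_v : v\in S\}$ of $\C B(G)$ is intersecting exactly when $S$ is an independent set of $G$. By LP duality, $\tau^*(\C B(G))=\nu^*(\C B(G))$, so it is equivalent to show that the fractional matching number of $\C B(G)$ is $O_{\eps,r}(1)$. A fractional matching in $\C B(G)$ is a weighting $g\colon V(G)\to[0,1]$ with $\sum_{v\in I} g(v)\le 1$ for every maximal independent set $I$ (equivalently, for every independent set). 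The first step is therefore to translate the goal: we must show that any such weighting has total weight $O_{\eps,r}(1)$.

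The core of the argument is a fractional-Helly-to-fractional-transversal step, which is exactly the edge from ``Fractional Helly $\C C$'' (together with VC-dimension) to ``Fractional transversal'' in \cref{fig:Relation}. Concretely, I would use the standard fact (see e.g.~\cite{2004DCG} or the Alon--Kleitman machinery) that a set system $\C F$ whose fractional Helly number is $h^*$ satisfies $\tau^*(\C F)=O_{h^*}(1)$ whenever $\C F$ has the $(h^*,h^*)$-property, or more robustly: if every $N$-element subfamily of $\C F$ (for $N$ large in terms of $h^*$) contains an intersecting $h^*$-tuple, then $\nu^*(\C F)$ is bounded. By \cref{cor:ultra-boundedVCandFractHelly}, for an $\eps$-ultra maximal $K_r$-free graph $G$ the fractional Helly number of $\C B(G)$ is at most $k:=(2/\eps)^{2/\eps}+1$. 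It remains to verify a Turán-type hypothesis: among any sufficiently large collection of sets $K_{v_1},\dots,K_{v_m}$, a positive fraction of the $k$-tuples are intersecting, i.e.\ a positive fraction of $k$-subsets of $\{v_1,\dots,v_m\}$ are independent in $G$. This is where the $K_r$-freeness enters: since $G$ is $K_r$-free with $r\le k$, by the Erdős--Ko--Rado / Turán-type bound the number of $K_k$'s (and a fortiori the number of $k$-cliques) is small, but we need a lower bound on \emph{independent} $k$-sets, which follows because a $K_r$-free graph on $m$ vertices, by Ramsey/Turán considerations, has independence number $\Omega(\log m)$ and in fact has many independent sets of bounded size — more directly, if $m$ is a constant depending on $\eps,r$ only, we can take $m = R(r,k)$ so that every $m$-subset contains an independent $k$-set, giving the $(m,k)$-property for $\C B(G)$, and then apply the fractional Helly number bound to conclude $\tau^*(\C B(G)) = O_{\eps,r}(1)$.

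I expect the main obstacle to be assembling the precise quantitative implication ``bounded fractional Helly number $+$ $(m,k)$-property $\Rightarrow$ bounded $\tau^*$'' in the abstract (non-Euclidean) setting, keeping the dependence on $\eps$ and $r$ explicit and correct. The cleanest route is likely the iterative/greedy argument: repeatedly extract a large intersecting subfamily (guaranteed by the fractional Helly property applied to the sub-system of sets not yet pierced fractionally), assign it a small uniform weight, and show that after $O_{\eps,r}(1)$ rounds every set has accumulated total weight at least $1$ along some independent set through it; bounding the number of rounds uses that each round kills a constant fraction, which is where $\beta=\beta(\alpha)$ from the fractional Helly number definition is invoked with $\alpha$ chosen as a function of $k$ only. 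One must be careful that the set system passed to fractional Helly at each stage is still a subfamily of $\C B(G)$ (so the bound on $h^*$ still applies) — this is fine since restricting to a sub-collection of vertices only shrinks the graph, preserving $K_r$-freeness and the ultra-maximality needed for \cref{cor:ultra-boundedVCandFractHelly}. A lighter alternative, if one only wants existence of \emph{some} bound, is to note that $\nu(\C B(G))=\omega(G)\le r-1$ and then bound the integrality gap $\nu^*/\nu$ using the bounded VC-dimension of $\C B(G)$ via the $\eps$-net theorem in dual form; but the fractional-Helly route gives the cleaner self-contained proof consistent with the framework in \cref{fig:Relation}.
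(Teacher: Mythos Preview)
Your proposal is correct in outline but takes a considerably longer route than the paper. The paper's proof is three lines: since $G$ is $K_r$-free, $\C B(G)$ has the $(r,2)$-property (any $r$ vertices contain a non-edge, hence two intersecting sets $K_u,K_v$); its Helly number is $2$ by \cref{prop:HellyNumber2}; its fractional Helly number is bounded by \cref{cor:ultra-boundedVCandFractHelly}; now feed these three facts into the black-box \cref{thm:fractionaltrans} (the Alon--Kalai--Matou\v{s}ek--Meshulam result) with $k=2$ to get $\tau^*(\C B(G))=O_{\eps,r}(1)$ directly.

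The key simplification you miss is that in \cref{thm:fractionaltrans} it is the \emph{Helly} number, not the fractional Helly number, that dictates which $(p,q)$-property is required. Because the Helly number of $\C B(G)$ equals $2$, the immediate $(r,2)$-property coming from $K_r$-freeness already suffices --- there is no need for the Ramsey detour to an $(R(r,k),k)$-property with $k=h^*(\C B(G))$, nor to re-derive the fractional-Helly-to-$\tau^*$ implication by hand, since \cref{thm:fractionaltrans} packages exactly that. Your longer route (Ramsey to obtain $(m,h^*)$, then an iterative Alon--Kleitman style argument) would go through and gives the same qualitative conclusion, just with worse constants and more work. One caveat: your ``lighter alternative'' at the end --- bounding the integrality gap $\nu^*/\nu$ via the $\eps$-net theorem in dual form, using only $\nu(\C B(G))=\omega(G)\le r-1$ --- does not obviously work: the $\eps$-net theorem controls $\tau/\tau^*$, not $\nu^*/\nu$, and there is no general bound on the latter from VC-dimension alone.
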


Before we prove this result, we need the following.

\begin{theorem}[\cite{2002AKMM}]\label{thm:fractionaltrans}
    Let $r,\ell \ge k\ge 2$ and $\C F$ be a set system with $(r,k)$-property. If the Helly number of $\mathcal{F}$ is $k$ and the fractional Helly number of $\mathcal{F}$ is $\ell$, then $\tau^{*}(\mathcal{F})= O_{r,\ell,k}(1)$.
\end{theorem}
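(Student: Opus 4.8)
The plan is to recover the argument of Alon--Kalai--Matou\v{s}ek--Meshulam~\cite{2002AKMM}. It suffices to bound $\tau^*(\C G)$ for every finite subfamily $\C G\subseteq\C F$, since each such $\C G$ inherits the $(r,k)$-property, the Helly number bound $\le k$, and the fractional Helly number bound $\le\ell$ (these are inherited from $\C F$); we may also assume $\varnothing\notin\C F$. The crux is a \emph{piercing lemma}: there is $c=c(r,\ell,k)>0$ such that any finite multiset of nonempty sets with the $(r,k)$-property, Helly number $\le k$, and fractional Helly number $\le\ell$ has a point of the ground set lying in at least a $c$-fraction of its members. Granting this, the \textbf{finish} is by LP duality and a blow-up. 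Fix finite $\C G\subseteq\C F$ and put $M=\nu^*(\C G)=\tau^*(\C G)$; take an optimal rational fractional matching $g$, so $\sum_F g(F)=M$ and $\sum_{F\ni v}g(F)\le1$ for every $v$, and choose a large integer $D$ with all $g(F)D\in\I Z$. Let $\C G^*$ be the multiset in which $F$ is repeated $g(F)D$ times; then $\C G^*$ has $m:=MD$ members, every point lies in at most $D$ of them, and $\C G^*$ still has Helly number $\le k$ and fractional Helly number $\le\ell$ (both depend only on the intersection pattern of the labelled family, so are unaffected by repetition), together with the $(r',k)$-property for $r'=r(k-1)$: among any $r'$ chosen members, either some set occurs $\ge k$ times — those copies intersect — or at least $r$ distinct sets appear, to which the $(r,k)$-property of $\C G$ applies. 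The piercing lemma then gives a point in at least $c(r',\ell,k)\,m$ members of $\C G^*$, whence $c(r',\ell,k)\,MD\le D$, i.e.\ $\tau^*(\C G)=M\le 1/c(r',\ell,k)=O_{r,\ell,k}(1)$.

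For the piercing lemma, let $\{G_1,\dots,G_m\}$ be such a multiset and form the $k$-uniform hypergraph $H$ on $[m]$ whose edges are the \emph{non}-intersecting $k$-tuples, i.e.\ $\{i_1,\dots,i_k\}\in E(H)$ iff $G_{i_1}\cap\cdots\cap G_{i_k}=\varnothing$. The $(r,k)$-property says precisely that $H$ is $K_r^{(k)}$-free, $K_r^{(k)}$ being the complete $k$-graph on $r$ vertices: an $r$-set all of whose $k$-subsets lie in $E(H)$ is an $r$-tuple no $k$-subtuple of which intersects. By the hypergraph Ramsey theorem there is a constant $R_0=R_0(r,\ell,k)$ so that any $k$-graph on $R_0$ vertices contains $K_r^{(k)}$ or an independent $\ell$-set; since $H$ is $K_r^{(k)}$-free, any $R_0$ of the $G_i$ contain $\ell$ members every $k$ of which intersect (either some set occurs so often among them that $\ell$ copies of it can be chosen, or there are at least $R^{(k)}(r,\ell)$ distinct sets present and Ramsey applies to those), and the hypothesis that the Helly number is $\le k$ promotes ``every $k$-subtuple intersects'' to an honest intersecting $\ell$-subfamily. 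So $\{G_1,\dots,G_m\}$ has an ``$(R_0,\ell)$-property'', and a supersaturation double count — pairing each intersecting $\ell$-set with its $R_0$-supersets — shows that at least $\binom{R_0}{\ell}^{-1}\binom m\ell$ of the $\ell$-tuples are intersecting. Feeding this into the fractional Helly number bound $\le\ell$ with $\alpha=\binom{R_0}{\ell}^{-1}$ yields an intersecting subfamily — equivalently, a point — covering a $\beta(\alpha)$-fraction of the members, so $c:=\beta\big(\binom{R_0}{\ell}^{-1}\big)$ works.

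The \textbf{main obstacle} is bridging the $k$-tuples controlled by the $(r,k)$-property and the $\ell$-tuples that the fractional Helly hypothesis consumes: a naive union bound over the edges of $H$ gives a positive density of intersecting $\ell$-tuples only in the degenerate case $\ell=k$. The resolution — recasting the $(r,k)$-property as $K_r^{(k)}$-freeness of $H$, using hypergraph Ramsey to produce \emph{local} $\ell$-subfamilies that are merely $k$-wise intersecting, and then invoking the Helly number bound $\le k$ to upgrade those to genuinely intersecting $\ell$-subfamilies before applying supersaturation — is exactly what makes the two structural hypotheses on $\C F$ cooperate. The secondary points are routine bookkeeping: verifying that the fractional-matching blow-up preserves the Helly and fractional Helly numbers, and recording that a greedy ``delete a pierced subfamily and repeat'' approach fails because deletion need not preserve the $(r,k)$-property, which is precisely why the LP-duality-plus-blow-up packaging is forced on us.
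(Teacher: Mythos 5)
The paper does not prove this statement; it is imported wholesale from Alon--Kalai--Matou\v{s}ek--Meshulam \cite{2002AKMM}, so there is no internal proof to compare against. Your reconstruction follows the standard Alon--Kleitman/AKMM scheme and its skeleton is right: reduce to finite subfamilies, use LP duality $\tau^*=\nu^*$ and a rational blow-up of an optimal fractional matching, observe that the blown-up multiset inherits an $(r(k-1),k)$-property, convert the $(r',k)$-property into $K_{r'}^{(k)}$-freeness of the ``non-intersecting $k$-tuples'' hypergraph, use hypergraph Ramsey to find $k$-wise intersecting $\ell$-tuples inside every $R_0$-subset, upgrade them to genuinely intersecting $\ell$-tuples via the Helly number $k$, supersaturate to get an $\binom{R_0}{\ell}^{-1}$-fraction of intersecting $\ell$-tuples, and feed this into fractional Helly to pierce a constant fraction of the multiset. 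That is exactly the intended argument, and the bookkeeping ($r'=r(k-1)$, the count $\binom{m}{\ell}/\binom{R_0}{\ell}$, the inequality $cMD\le D$) is correct; the only trivial omission is that the supersaturation step needs $m\ge R_0$, which you can always arrange by taking $D$ large.

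The one step you pass over too quickly is the parenthetical claim that the fractional Helly number is ``unaffected by repetition.'' For the Helly number this is genuinely immediate, but the fractional Helly hypothesis, as stated in Definition~\ref{defn:frac-Helly-number}, quantifies over subfamilies $\{F_1,\dots,F_m\}\subseteq\mathcal{F}$, which on a literal reading are repetition-free; your blown-up family $\mathcal{G}^*$ is a multiset in which both the hypothesis ($\alpha$-fraction of intersecting index-$\ell$-tuples) and the conclusion ($\beta$-fraction of indices sharing a point) change meaning. The transfer from the repetition-free statement to the multiset statement is true but not a one-liner: one has to split into the case where some set has multiplicity $\ge\gamma m$ (then its copies are already a pierced positive fraction, using nonemptiness) and the case where all multiplicities are small, and in the latter case relate index-counting to distinct-set counting, which is delicate when multiplicities are uneven. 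The clean fix is either to carry out this two-case argument explicitly, or to state the fractional Helly hypothesis for labelled families with repetitions from the outset --- which is harmless in this paper, since the verification in Lemma~\ref{lem:BoundedHellyNumber} (via Holmsen's theorem, indexed by vertices $u_1,\dots,u_m$ whose sets $K_{u_i}$ may coincide) already proves the multiset form. As written, though, this is an unjustified step rather than mere bookkeeping, and it is precisely the point where the ``LP-duality-plus-blow-up packaging'' you rightly say is forced on you demands the stronger, multiset-friendly form of the hypothesis.
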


\begin{proof}[Proof of~\cref{lem:UltraToFracTau}]
By the correspondence in~\cref{tab:table}, $\C B(G)$ has $(r,2)$-property as $G$ is $K_r$-free. The Helly number of $\mathcal{B}(G)$ is $2$ by~\cref{prop:HellyNumber2} and the fractional Helly number of $\C B(G)$ is also bounded by~\cref{cor:ultra-boundedVCandFractHelly}. The conclusion then follows from~\cref{thm:fractionaltrans} with $k=2$.
\end{proof}

Finally, combining~\cref{thm:eps-net},~\cref{cor:ultra-boundedVCandFractHelly} and~\cref{lem:UltraToFracTau}, we obtain~\cref{thm:main0}.

\subsection{VC-dimension and fractional Helly number}\label{sec:VC-FracHelly}
In this subsection, we prove~\cref{lem:VC-Matching,lem:BoundedHellyNumber}.

\begin{proof}[Proof of~\cref{lem:VC-Matching}]
    Suppose the VC-dimension of $\mathcal{B}(G)$ is $d\ge 3$, then there is a set $A:=\{I_{1},\ldots,I_{d}\}$ of $d$ maximal independent sets shattered by $\mathcal{B}(G)$. We shall find a bipartite induced matching of size $d$ in $G$. Note that for each $i\in [d]$, there exists some $K_{v_{i}}\in\mathcal{B}(G)$ such that $K_{v_{i}}\cap A=A\setminus \{I_{i}\}$. Then $v_i\notin I_i$, which together with the maximality of $I_i$ implies that there is a vertex $u_{i}\in I_{i}$ such that $v_{i}u_{i}\in E(G)$. Moreover, for all distinct $i,j\in [d]$, $v_{i}\in I_{j}$, implying that $v_{i}u_j\notin E(G)$. That is, $v_i$ is adjacent to $u_j$ if and only if $i=j$. In particular, all $v_i$ are distinct vertices and all $u_i$ are distinct vertices. 

    We now show that for any $i,j\in[d]$, $v_i\neq u_j$. Suppose $v_i=u_j$. Then $i\neq j$, for otherwise $v_j=v_i=u_j$ contradicting $u_jv_j\in E(G)$. Note also that for any distinct $i,j\in[d]$, $K_{v_{i}}\cap K_{v_{j}}\cap A=A\setminus\{I_{i},I_{j}\}\neq\varnothing$ as $d\ge 3$, implying that $u_jv_j=v_{i}v_{j}\notin E(G)$, a contradiction. Thus, $\{u_{i}v_{i}\}_{i\in [d]}$ is a bipartite induced matching of size $d$ as desired.  
\end{proof}

We can also bound the VC-dimension of the dual of $\C B(G)$ by its bipartite induced matching number.

\begin{lemma}\label{lem:VCdualBGBiIndMatching}
    For any graph $G$, if the VC-dimension of the dual of $\mathcal{B}(G)$ is $d\ge 1$, then $d\le\nu_{bi}(G)$.
\end{lemma}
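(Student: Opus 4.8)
The plan is to mirror the argument for Lemma~\ref{lem:VC-Matching}, but now working with the dual set system $\C M(G)$ rather than $\C B(G)$. Recall that the dual of $\C B(G)$ has ground set $V(G)$ and its sets are indexed by $\MIS(G)$: the set corresponding to a maximal independent set $I$ is just $I$ itself, viewed as a subset of $V(G)$. So suppose $\VC(\C M(G))=d\ge 1$, witnessed by a shattered set $S=\{v_1,\dots,v_d\}\subseteq V(G)$. Shattering means that for every subset $B\subseteq S$ there is a maximal independent set $I_B\in\MIS(G)$ with $I_B\cap S=B$. My goal is to extract from this a bipartite induced matching of size $d$ in $G$.

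The key step is to look, for each $i\in[d]$, at the subset $B_i:=S\setminus\{v_i\}$. There is a maximal independent set $I_i:=I_{B_i}$ with $I_i\cap S=S\setminus\{v_i\}$; in particular $v_j\in I_i$ for all $j\ne i$ but $v_i\notin I_i$. Since $I_i$ is a maximal independent set not containing $v_i$, there must be a vertex $u_i\in I_i$ adjacent to $v_i$ (otherwise $I_i\cup\{v_i\}$ would be independent, contradicting maximality). Now for distinct $i,j$ we have $v_j\in I_i$ and $u_i\in I_i$, and $I_i$ is independent, so $v_j u_i\notin E(G)$. Hence $v_i$ is adjacent to $u_j$ if and only if $i=j$, which is exactly the bipartite induced matching condition — modulo the bookkeeping that the $2d$ vertices $u_1,\dots,u_d,v_1,\dots,v_d$ are genuinely distinct. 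The $v_i$ are distinct because they lie in the shattered set $S$; the $u_i$ are distinct because $u_i$ is adjacent to $v_i$ but $u_j$ ($j\ne i$) is not; and $v_i\ne u_j$ follows from the same reasoning (if $v_i=u_j$ with $i=j$ we contradict $u_iv_i\in E(G)$, and if $i\ne j$ then $v_i=u_j\in I_j$ while $v_i$ is adjacent to $u_j$, contradicting independence of $I_j$ — here note we do not even need $d\ge 3$ since $u_j$ being in $I_j$ directly clashes with $v_i\in I_j$). This gives $d\le \match(G)=\nu_{bi}(G)$.

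I expect the only mild subtlety to be the distinctness argument; in Lemma~\ref{lem:VC-Matching} the authors needed $d\ge 3$ to rule out $v_i=u_j$, but here the roles are reversed (the shattered objects are vertices, not maximal independent sets), and one checks that the hypothesis $d\ge 1$ suffices because the obstruction ``$v_i = u_j$'' now leads to a contradiction purely from $v_i\in I_j$ and $v_i\sim u_j$. So no extra case analysis on small $d$ is needed, which is why the statement can be cleanly phrased for all $d\ge 1$.
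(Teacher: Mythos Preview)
Your overall approach is exactly the paper's: take a shattered $d$-set $S=\{v_1,\dots,v_d\}$, use the maximal independent sets $I_j:=I_{S\setminus\{v_j\}}$ to produce $u_j\in I_j$ with $u_jv_j\in E(G)$ and $u_jv_i\notin E(G)$ for $i\ne j$, and conclude that $\{u_iv_i\}_{i\in[d]}$ is a bipartite induced matching.

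There is, however, a genuine slip in your distinctness check for $v_i\ne u_j$ when $i\ne j$. You write ``$v_i=u_j\in I_j$ while $v_i$ is adjacent to $u_j$, contradicting independence of $I_j$'', but under the assumption $v_i=u_j$ the assertion ``$v_i$ is adjacent to $u_j$'' is a self-loop and hence vacuous; and without that assumption we have just shown $v_iu_j\notin E(G)$, so the claimed adjacency is false either way. Trying to route the contradiction through $I_j$ does not work: $v_i=u_j$ gives $v_iv_j\in E(G)$, but $v_j\notin I_j$, so nothing about $I_j$ is violated.

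The easy fix --- and this is precisely what the paper does --- is to first note that $S$ is independent, because shattering also furnishes a maximal independent set $I_S$ with $S\subseteq I_S$. Then $u_j\notin S$ follows at once from $u_jv_j\in E(G)$, giving $u_j\ne v_i$ for every $i$ in one stroke. With this single sentence added your argument is complete, and indeed no hypothesis beyond $d\ge 1$ is required.
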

\begin{proof}
    Recall that the dual of $\C B(G)$ is the maximal independent set hypergraph $\C M=\C M(G)$. By assumption, there is a set $D$ of $d$ vertices $v_{1},\ldots,v_{d}$ such that for any subset $D'\subseteq D$, there is a maximal independent set $I_{D'}$ such that $I_{D'}\cap D=D'$. In particular, $D$ is an independent set by considering $I_D$. For each $i\in [d]$, as $v_i\notin I_{D\setminus \{v_i\}}$, the maximality of $I_{D\setminus \{v_i\}}$ implies that there exists some vertex $u_{i}\in I_{D\setminus \{v_i\}}$ such that $v_{i}u_{i}\in E(G)$ and $v_{j}u_{i}\notin E(G)$ for any $j\neq i$. Since $D$ is an independent set, we must have $u_i\notin D$. Moreover, $u_{1},u_{2},\ldots,u_{d}$ are all distinct. Indeed, if $u_j=u_i$ for $j\neq i$, then $v_ju_j=v_ju_i\notin E(G)$, a contradiction. Thus, $\{v_{i}u_{i}\}_{i\in [d]}$ is a bipartite induced matching of size $d$.
\end{proof}

We now bound the fractional Helly number of $\C B(G)$. Matou\v{s}ek~\cite{2004DCG} proved that if the VC-dimension of the dual of a set system $\C F$ is at most $k-1$, then the fractional Helly number of $\C F$ is at most $k$. This, together with~\cref{lem:VCdualBGBiIndMatching}, implies that $h^*(\C B(G))\le \VC(\C M(G))+1\le  \match(G)+1$. Here, we give a different proof using the following result of Holmsen~\cite{2020Holmsen}.

\begin{theorem}[\cite{2020Holmsen}]\label{thm:HolmsenInducedFree}
  For any $k\ge 2$ and $\alpha>0$, there exists $\beta>0$ such that the following holds. Let $G$ be an $n$-vertex graph with at least $\alpha\binom{n}{k}$ independent sets of size $k$. If $G$ does not contain an induced matching of size $k$, then $\alpha(G)\ge\beta n$.
\end{theorem}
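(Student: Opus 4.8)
The plan is to prove this with Szemerédi's regularity lemma, letting the two hypotheses do two different jobs: a positive density of independent $k$-sets will be used in a counting argument, and the absence of an induced matching of size $k$ will be used both to exclude regular pairs of intermediate density and, at the end, to force a linear-sized independent set inside a single cluster. Fix a constant $\eta=\eta(\alpha,k)>0$ (it will suffice that $\eta<\tfrac1{14k}$ and $\eta<\tfrac{\alpha}{100\cdot k!}$), then choose a regularity parameter $\varepsilon=\varepsilon(\eta,k)$ small and a lower bound $m_0=m_0(\alpha,k)$ large, and apply the regularity lemma to get an $\varepsilon$-regular partition $V_0\cup V_1\cup\cdots\cup V_M$ with $m_0\le M\le M_0(\alpha,k)$. (If $n\le n_0(\alpha,k)$ take $\beta$ tiny, using $\alpha(G)\ge 1$; so assume $n$ large.) The first step is the claim that no $\varepsilon$-regular pair $(V_a,V_b)$ has density in $[\eta,1-\eta]$: the induced counting/embedding lemma for regular pairs says that an $\varepsilon$-regular pair with both parts large and density bounded away from $0$ and $1$ by $\eta$ contains an induced copy of every fixed bipartite graph, in particular of $kK_2$; an induced $kK_2$ between $V_a$ and $V_b$ is an induced matching of size $k$ in $G$, contradicting the hypothesis. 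Hence every $\varepsilon$-regular pair is \emph{sparse} (density $<\eta$) or \emph{dense} (density $>1-\eta$).

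Second, I extract $k$ clusters that are pairwise sparse and regular by counting. Of the at least $\alpha\binom nk$ independent $k$-sets, at most $O_k(1/M)\binom nk<\tfrac\alpha4\binom nk$ have two vertices in a common part (since $M$ is large); among the remaining rainbow ones, at most $2\eta n^k<\tfrac\alpha4\binom nk$ have an irregular or a dense pair among the $\binom k2$ pairs of clusters they occupy (there are $\le\varepsilon\binom M2$ irregular pairs, and a dense pair of parts spans fewer than an $\eta$-fraction of non-edges). Subtracting, at least $\tfrac\alpha2\binom nk>0$ independent $k$-sets are rainbow across $k$ distinct clusters all of whose pairs are sparse and $\varepsilon$-regular; fix such clusters $A_1,\dots,A_k$, each of size $\ge n/(2M)$.

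Third comes a dichotomy on the internal structure of $A_1,\dots,A_k$. If some $G[A_j]$ has an independent set of size $\ge|A_j|/2\ge n/(4M)$, then $\alpha(G)\ge n/(4M)$ and we are done with $\beta=\tfrac1{4M_0}$. Otherwise $\alpha(G[A_j])<|A_j|/2$ for all $j$, so (the vertices missed by a maximum matching form an independent set) $\nu(G[A_j])\ge\tfrac{|A_j|-\alpha(G[A_j])}2>\tfrac{|A_j|}4$ for all $j$, and I build an induced matching of size $k$ greedily. Process $A_1,\dots,A_k$ in order; when processing $A_j$, delete from it the neighbourhoods of the $2(j-1)$ endpoints already chosen (each such endpoint was selected to have fewer than $3\eta|A_j|$ neighbours in $A_j$), together with the fewer than $k\varepsilon|A_j|$ vertices of $A_j$ that have a $3\eta$-fraction of neighbours in some later cluster $A_i$ (this count follows from $\varepsilon$-regularity of the sparse pair $(A_j,A_i)$). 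What remains has size $>(1-7k\eta)|A_j|>|A_j|/2$ and independence number still $<|A_j|/2$, hence a nonempty matching; pick an edge $e_j$ from it. The edges $e_1,\dots,e_k$ are vertex-disjoint (they lie in distinct clusters) and there are no edges between the endpoints of $e_i$ and $e_j$ for $i\ne j$, so $\{e_1,\dots,e_k\}$ is an induced matching of size $k$ — contradicting the hypothesis. Hence the first alternative must hold, proving the theorem with $\beta=\Omega_{\alpha,k}(1)$.

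The step I expect to be the main obstacle is the first one: deducing an induced $kK_2$ from an intermediate-density regular pair requires the \emph{induced} counting/embedding lemma (controlling non-edges as well as edges), and one must be careful with the order of quantifiers — fix the density gap $\eta$ as a constant depending only on $\alpha$ and $k$ first, and only then choose $\varepsilon\ll\eta$, since the embedding lemma needs $\varepsilon$ small relative to the gap. A secondary, here-harmless, drawback is that regularity makes $\beta$ tower-type in $1/\alpha$ and $k$; obtaining a better dependence would require a regularity-free argument.
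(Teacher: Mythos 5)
The paper does not prove this statement at all -- it is quoted from Holmsen \cite{2020Holmsen} and used as a black box -- so your proposal can only be judged on its own merits, and it has a genuine gap exactly at the step you flagged as delicate, though for a different reason than the one you give. The induced embedding lemma for a \emph{single} $\varepsilon$-regular pair $(V_a,V_b)$ controls only the \emph{cross} adjacencies: from an intermediate-density regular pair you can indeed extract vertices $u_1,\dots,u_k\in V_a$, $v_1,\dots,v_k\in V_b$ with $u_iv_j\in E(G)$ exactly when $i=j$, but you have no control over the edges inside $V_a$ or inside $V_b$. What you obtain is therefore only a \emph{bipartite} induced matching in the sense of this paper, not an induced matching (where the $2k$ chosen vertices must induce a perfect matching), so it does not contradict the hypothesis of the theorem. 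The dichotomy you assert in Step 1 is in fact false under the theorem's hypotheses: let $G$ consist of a clique $C$ on $n/2$ vertices, an independent set $I$ on $n/2$ vertices, and a quasirandom bipartite graph of density $1/2$ between them. Then $G$ has at least $\binom{n/2}{k}\ge 3^{-k}\binom{n}{k}$ independent $k$-sets, and it has no induced matching even of size $2$ (any two disjoint edges each have an endpoint in $C$, and those endpoints are adjacent), yet $(C,I)$ -- and hence many cluster pairs in any regularity partition -- is $\varepsilon$-regular of density about $1/2$. Since Step 2 deduces ``sparse'' from ``regular and not dense'' only via this false dichotomy, and Step 3's greedy construction genuinely needs the chosen pairs to be sparse (neighbourhood deletions across an intermediate-density pair would wipe out a constant fraction of a cluster at every step), the argument cannot be completed as structured; handling intermediate-density pairs requires a new idea, e.g.\ some way to force independent $k$-sets \emph{inside} the relevant clusters so that a cross pattern can be upgraded to a genuine induced matching.

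Two further remarks. First, your proof, read with the hypothesis ``no \emph{bipartite} induced matching of size $k$'', is essentially correct (Steps 2 and 3 are fine modulo routine bookkeeping such as the exceptional class $V_0$), and that weaker variant would actually suffice for the paper's application in \cref{lem:BoundedHellyNumber}, since there the graph has bounded bipartite induced matching number; but it is not the stated theorem, whose hypothesis only forbids the more restrictive induced matchings. Second, the self-diagnosed obstacle in your last paragraph (availability of an induced counting lemma and the order of quantifiers $\eta$ before $\varepsilon$) is not the real issue -- that part is standard -- and the tower-type dependence of $\beta$ would be acceptable; the problem is solely that cross-regularity says nothing about the inside of the two parts.
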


\begin{proof}[Proof of~\cref{lem:BoundedHellyNumber}]
Let $\match(G)=k-1$. By the definition of fractional Helly number, we need to show that for every $\alpha >0$, there exists $\beta>0$ such that every family $\mathcal{F}=\{K_{u_{1}},\ldots,K_{u_{m}}\}\subseteq \C B(G)$ with at least $\alpha\binom{m}{k}$ intersecting $k$-subsets must contain an intersecting subfamily of size $\beta m$. By the correspondence in~\cref{tab:table}, this amounts to proving that for every $U=\{u_1,\ldots,u_m\}\subseteq V(G)$, if $G[U]$ has at least $\alpha\binom{m}{k}$ independent sets of size $k$, then $\alpha(G[U])\ge \beta m$. This follows immediately from~\cref{thm:HolmsenInducedFree} as $G$ does not contain an induced matching of size $\match(G)+1=k$.
\end{proof}

\subsection{Bipartite induced matching in $\varepsilon$-ultra maximal $K_{r}$-free}\label{subsec:bi-ind-matching}
In this subsection, we prove~\cref{thm:BoundedInducedMatching}. We need a notion of half graphs. Let $\mathcal{H}_{k}$ be the family of graphs which consists of all of the graphs $H$ with vertices $\{x_{i},y_{i}:~i\in[k]\}$ such that 
\begin{itemize}
    \item $x_{i}y_{i}\notin E(H)$ for all $i\in[k]$; and
    \item $x_{i}y_{j}\in E(H)$ for all $1\le j<i\le k$. 
\end{itemize}
Note that we put no restriction on pairs $\{x_{j},y_{i}\}$ for all $1\le j<i\le k$ and no restriction on $H[\{x_{1},\ldots,x_{k}\}]$ and $H[\{y_{1},\ldots,y_{k}\}]$. We call graphs in $\C H_k$ \Yemph{half graphs}.

\begin{figure}[htbp]
\begin{minipage}[t]{0.48\textwidth}
\centering
\begin{tikzpicture}[>=Stealth, line width=0.8pt, node distance=2cm,scale=0.35]
\node[draw, circle, inner sep=1pt, label={left:$x_{4}$}] (A) at (0,0) {};
\node[draw, circle, inner sep=1pt, label={left:$x_{3}$}] (B) at (0,2) {};
\node[draw, circle, inner sep=1pt, label={left:$x_{2}$}] (C) at (0,4) {};
\node[draw, circle, inner sep=1pt, label={left:$x_{1}$}] (D) at (0,6) {};
\node[draw, circle, inner sep=1pt, label={right:$y_{4}$}] (E) at (3,0) {};
\node[draw, circle, inner sep=1pt, label={right:$y_{3}$}] (F) at (3,2) {};
\node[draw, circle, inner sep=1pt,label={right:$y_{2}$}] (G) at (3,4) {};
\node[draw, circle, inner sep=1pt, label={right:$y_{1}$}] (H) at (3,6) {};

\draw[dashed] (A) -- (E);
\draw[dashed] (B) -- (F);
\draw[dashed] (C) -- (G);
\draw[dashed] (D) -- (H);

\draw (A) -- (F);
\draw (A) -- (G);
\draw (A) -- (H);
\draw (B) -- (G);
\draw (B) -- (H);
\draw (C) -- (H);

\end{tikzpicture}
\caption{Half graph}\label{fig:halfgraph}
\end{minipage}
\begin{minipage}[t]{0.48\textwidth}
\centering
\begin{tikzpicture}[>=stealth, line width=0.8pt, scale=0.35]


\node[draw, circle, inner sep=1pt, label={left:$b_{1}$}] (b1) at (-2.5,6) {};
\node[draw, circle, inner sep=1pt, label={above:$a_{1}$}] (a1) at (0,6) {};
\node[draw, circle, inner sep=1pt, label={above:$c_{1}$}] (c1) at (2.5,6) {};
\node[draw, circle, inner sep=1pt, label={above:$a_{2}$}] (a2) at (0,4) {};
\node[draw, circle, inner sep=1pt, label={right:$a_{\frac{\varepsilon q}{2}+1}$}] (aq2) at (0,2) {};
\node[draw, circle, inner sep=1pt, label={left:$a_{q}$}] (aq) at (0,-1) {};


\draw[red, thick] (b1) -- (a1);

\draw[red, dashed, thick] (b1) -- (a2);
\draw[red, dashed, thick] (b1) -- (aq2);
\draw[red, dashed, thick] (b1) -- (aq);
\draw[red, dashed, thick] (a1) -- (c1);

\draw[red, thick] (c1) -- (a2);
\draw[red, thick] (c1) -- (aq2);

\draw[blue, dashed, thick] (aq2) -- (aq);

\end{tikzpicture}
\caption{\cref{cl:buildHalf}}\label{fig:FindLargeHalf}
\end{minipage}
\end{figure}

\begin{lemma}\label{lemma:NoHalfGraph}
   Let $\eps>0$, $r\ge 3$ and $G$ be an $\eps$-ultra maximal $K_r$-free graph. Then $G$ does not contain any half graph in $\mathcal{H}_{k}$ as a subgraph, where $k=\frac{1}{\eps}+1$.
\end{lemma}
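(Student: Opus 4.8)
The plan is to argue by contradiction: suppose $G$ contains a half graph $H\in\mathcal H_k$ with $k=\frac1\eps+1$, on vertices $\{x_i,y_i:i\in[k]\}$, so that $x_iy_i\notin E(G)$ for every $i$ and $x_iy_j\in E(G)$ whenever $j<i$. The key observation is that for each nonadjacent pair $x_i,y_i$, the ultra maximal hypothesis gives many copies of $K_{r-2}$ inside $N(x_i)\cap N(y_i)$; in particular $N(x_i,y_i)$ is nonempty and in fact induces a subgraph with positive $K_{r-2}$-density. I would first extract from this a single common neighbour $w_i\in N(x_i)\cap N(y_i)$ for each $i$ (or, if needed for the later step, a $K_{r-2}$ sitting in $N(x_i,y_i)$). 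The point of the half-graph structure is that $x_i$ is adjacent to all $y_j$ with $j<i$, so as $i$ grows the vertex $x_i$ has a large "back-neighbourhood" among the $y_j$'s, and simultaneously is nonadjacent to its partner $y_i$; playing these two facts against the $K_{r-2}$-rich common neighbourhoods should force a copy of $K_r$.

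More concretely, the main step I expect to carry out is the following counting/pigeonhole argument. For the pair $x_k,y_k$ (the "top" of the half graph), consider $N(x_k,y_k)$, which contains $\eps n^{r-2}$ copies of $K_{r-2}$, hence in particular $|N(x_k,y_k)|\ge \eps^{1/(r-2)}n$, and more usefully a positive fraction of $(r-2)$-subsets of $N(x_k,y_k)$ are cliques. Now the vertices $y_1,\dots,y_{k-1}$ are all neighbours of $x_k$; if one of them, say $y_j$, lies in $N(x_k,y_k)$ and extends a $K_{r-2}$ there, or more simply if we can iterate the argument down the chain $x_k,x_{k-1},\dots$, picking at each level a new common-neighbour clique vertex that is adjacent to all previously chosen $x$'s, we build up a growing clique. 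The half-graph constraint $x_iy_j\in E$ for $j<i$ is exactly what guarantees that the $x_i$'s chosen from lower levels remain adjacent to the clique being assembled, while the density condition at each level supplies the $K_{r-2}$ to glue on. With $k-1 = \frac1\eps$ levels available, and the $K_{r-2}$-density $\eps$ at each step, a greedy/probabilistic selection should survive: after restricting to common neighbourhoods $\frac1\eps$ times the surviving clique-density stays positive (each restriction to $N(x_i,y_i)$ loses at most a controlled factor, and $(1-\text{something})$ iterated $\frac1\eps$ times against the gain $\eps$ per step balances out), so a nonempty choice remains, yielding a $K_{r-2}$ together with two extra vertices forming $K_r$ — contradicting $K_r$-freeness.

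The hard part will be making the iteration quantitatively tight enough to get exactly $k=\frac1\eps+1$ rather than some worse function of $\frac1\eps$: one has to choose the right invariant to track (number of copies of $K_{r-2}$ in the current common neighbourhood, or its density) and show it does not collapse before $\frac1\eps$ steps are used. A clean way to organize this is to prove the auxiliary claim indicated in Figure~\ref{fig:FindLargeHalf} (the figure labelled \cref{cl:buildHalf} in the excerpt), namely that inside a half graph one can locate, among the $y_j$'s with small index, a vertex that is a common neighbour of $x_i,y_i$ and can be appended to the clique; iterating this claim $r-2$ (not $\frac1\eps$) times builds the $K_{r-2}$, while the length $\frac1\eps+1$ of the half graph is what's needed for the density to remain positive through the required number of common-neighbourhood restrictions. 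I would set up the bookkeeping so that each use of the ultra maximal hypothesis is charged $\eps$ and the total budget is $1$, giving the threshold $k-1=\frac1\eps$; the remaining $+1$ is the partner vertex $x_i$ (or $y_i$) that closes the clique to $K_r$.
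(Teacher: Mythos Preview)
Your proposal misses the key idea and is far more complicated than needed. The paper's proof is a single pigeonhole, not an iteration: for each $i\in[k]$ the hypothesis gives $\eps n^{r-2}$ copies of $K_{r-2}$ inside $G[N(x_i,y_i)]$; since there are only $\binom{n}{r-2}\le n^{r-2}$ possible $(r-2)$-sets, and $k\eps>1$, some single copy of $K_{r-2}$ lies in $G[N(x_i,y_i)]\cap G[N(x_j,y_j)]$ for two indices $j<i$. That clique together with $x_i$ and $y_j$ (adjacent by the half-graph condition) is a $K_r$. That is the entire argument, and the threshold $k=\frac1\eps+1$ falls out exactly because $k\eps>1$ is what pigeonhole needs.

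Your outline goes astray in two places. First, you propose to \emph{build} the $K_{r-2}$ by iterating $r-2$ times through successive common neighbourhoods; but the $K_{r-2}$ is handed to you directly by the ultra maximal hypothesis, so no iteration is needed. Second, the density-tracking scheme (``restrict $\frac1\eps$ times, lose a controlled factor each time'') is never made precise and there is no obvious invariant that survives $\frac1\eps$ restrictions with only an $\eps$ gain per step; as written this is hand-waving, not an argument. You also misread the role of \cref{cl:buildHalf}: that claim belongs to the proof of \cref{lemma:ObstructionToIDM} (turning a bipartite induced matching into a half graph), not to the present lemma.
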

\begin{proof}
Suppose to the contrary that $G$ contains a copy of some half graph $H\in\mathcal{H}_{k}$ with vertex set $\{x_{i},y_{i}:~i\in[k]\}$. As $G$ is $\eps$-ultra maximal $K_r$-free and $x_{i}y_{i}\notin E(G)$ for all $i\in[k]$, the induced subgraph $G[N(x_{i},y_{i})]$ contains at least $\varepsilon n^{r-2}$ many $(r-2)$-cliques. By the pigeonhole principle, there must exist a copy of $K_{r-2}$ lying in at least $\lceil\frac{k\eps n^{r-2}}{\binom{n}{r-2}}\rceil\ge \lceil k\eps\rceil\ge 2$ common neighborhood of distinct pairs, say $G[N(x_{i},y_{i})]$ and $G[N(x_{j},y_{j})]$ with $1\le j<i\le k$. Then, as $x_iy_j\in E(G)$, this copy of $K_{r-2}$ together with $x_{i}$ and $y_{j}$ forms a copy of $K_{r}$, a contradiction.
\end{proof}

\begin{lemma}\label{lemma:ObstructionToIDM}
    Let $\eps>0$, $r\ge 3$ and $G$ be an $\eps$-ultra maximal $K_r$-free graph. If $G$ contains a bipartite induced matching of size $t$, then $G$ contains a copy of some graph $H\in\mathcal{H}_{k}$ as a subgraph, where $k=\log t/\log\frac{2}{\eps}$. 
\end{lemma}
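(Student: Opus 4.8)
The plan is to build the desired half graph one ``level'' at a time by a greedy recursion, where at each step the objects we intersect are not neighbourhoods of single vertices but \emph{families of $K_{r-2}$-copies}. For a vertex $w\in V(G)$, let $\mathcal{Q}(w)$ denote the family of copies of $K_{r-2}$ contained in $G[N(w)]$ (equivalently, the $(r-2)$-cliques all of whose vertices are adjacent to $w$). Two facts will drive the argument. Since $G$ is $K_r$-free, if $ww'\in E(G)$ then $\mathcal{Q}(w)\cap\mathcal{Q}(w')=\varnothing$, because any common copy together with $w$ and $w'$ would span a $K_r$. On the other hand, the hypothesis that $G$ is $\eps$-ultra maximal $K_r$-free says precisely that $|\mathcal{Q}(w)\cap\mathcal{Q}(w')|\ge\eps n^{r-2}$ whenever $ww'\notin E(G)$. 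Also, trivially $|\mathcal{Q}(w)|\le\binom{n}{r-2}\le n^{r-2}$. Fix a bipartite induced matching $\{u_iv_i\}_{i\in[t]}$; recall $u_iv_j\in E(G)$ iff $i=j$, and its $2t$ endpoints are distinct.

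First I would set up the recursion. I maintain nested index sets $[t]=S^{(0)}\supseteq S^{(1)}\supseteq\cdots$ together with half-graph vertices. Given $S^{(a-1)}$ still large, put $i_a:=\min S^{(a-1)}$ and $x_a:=u_{i_a}$. For every $j\in S^{(a-1)}\setminus\{i_a\}$ we have $u_jv_{i_a}\notin E(G)$, hence $|\mathcal{Q}(u_j)\cap\mathcal{Q}(v_{i_a})|\ge\eps n^{r-2}$; double counting over $\mathcal{Q}(v_{i_a})$ gives
\begin{align*}
\sum_{Q\in\mathcal{Q}(v_{i_a})}\bigl|\{j\in S^{(a-1)}\setminus\{i_a\}:Q\subseteq N(u_j)\}\bigr|
&=\sum_{j\in S^{(a-1)}\setminus\{i_a\}}|\mathcal{Q}(u_j)\cap\mathcal{Q}(v_{i_a})|\\
&\ge(|S^{(a-1)}|-1)\,\eps n^{r-2},
\end{align*}
and since the outer sum has at most $|\mathcal{Q}(v_{i_a})|\le n^{r-2}$ terms, some $Q_a\in\mathcal{Q}(v_{i_a})$ is contained in $N(u_j)$ for at least $(|S^{(a-1)}|-1)\eps\ge\tfrac{\eps}{2}|S^{(a-1)}|$ indices $j$; let $S^{(a)}$ be the set of these indices. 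Because $u_{i_a}v_{i_a}\in E(G)$, the first fact forces $Q_a\not\subseteq N(u_{i_a})$, so I may choose $y_a\in Q_a\setminus N(u_{i_a})$. (A routine extra averaging that discards the $K_{r-2}$-copies through $u_{i_a}$, of which there are at most $\binom{n}{r-3}$, lets me moreover take $y_a\neq u_{i_a}$, which I will need for distinctness; this only changes the threshold on $|S^{(a-1)}|$ by a constant factor.)

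Then I would check that $\{x_a\}_{a\in[k]}\cup\{y_a\}_{a\in[k]}$ spans a member of $\mathcal{H}_k$. By construction $x_ay_a=u_{i_a}y_a\notin E(G)$. For $b<a$, since $i_a\in S^{(a-1)}\subseteq S^{(b)}$ we have $Q_b\subseteq N(u_{i_a})$, so $x_ay_b=u_{i_a}y_b\in E(G)$ as $y_b\in Q_b$; and $\mathcal{H}_k$ imposes nothing on the remaining pairs. For distinctness: the $i_a$ are pairwise distinct (if $a<a'$ then $i_{a'}\in S^{(a'-1)}\subseteq S^{(a)}$ while $i_a\notin S^{(a)}$), so the $x_a$ are distinct; $y_a\in Q_a\subseteq N(v_{i_a})$ together with the matching property gives $y_a\neq v_{i_a}$ and $y_a\neq u_{i_b}=x_b$ for $b\neq a$, while $y_a\neq x_a$ by the choice above; and for $a<b$, $y_a$ is adjacent to $u_{i_b}$ (as $i_b\in S^{(b-1)}\subseteq S^{(a)}$) whereas $y_b$ is not, so $y_a\neq y_b$. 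Finally, $|S^{(a)}|\ge(\eps/2)^a t$, so the recursion runs for $k$ steps once $(\eps/2)^{k-1}t$ exceeds the required threshold; taking $k=\log t/\log(2/\eps)$ makes $|S^{(k-1)}|\ge 2/\eps$, which is large enough to carry out all $k$ steps and yields a copy of a graph in $\mathcal{H}_k$ in $G$, as required.

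I expect the crux to be the averaging in the second paragraph: recognising that the right objects to intersect are the families $\mathcal{Q}(w)$ of $K_{r-2}$-copies rather than the neighbourhoods $N(w)$, and that peeling off a single half-graph level costs only a factor $2/\eps$ in the number of surviving matching edges. Getting this factor to be exactly $2/\eps$ is what lets the lemma combine with \cref{lemma:NoHalfGraph} to yield the bound in \cref{thm:BoundedInducedMatching}. The distinctness bookkeeping for the $2k$ chosen vertices is a minor but genuine technicality.
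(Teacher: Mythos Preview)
Your proposal is correct and follows essentially the same approach as the paper's proof. Both arguments iterate the same basic step: at round $a$, use the matching partner $v_{i_a}$ as a pivot, apply the $\eps$-ultra maximal hypothesis to the non-adjacent pairs $\{u_j,v_{i_a}\}$, pigeonhole over $K_{r-2}$-copies in $N(v_{i_a})$ to find one lying in many $N(u_j)$'s, and then extract from it a vertex $y_a$ not adjacent to $u_{i_a}$ using $K_r$-freeness. The paper packages this step as a separate claim (\cref{cl:buildHalf}) and then iterates; you run the recursion directly with the explicit set families $\mathcal{Q}(w)$, but the content is identical, including the $\eps/2$ loss per round that yields $k=\log t/\log(2/\eps)$. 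One small point where you are actually more careful than the paper: you verify that all $2k$ vertices $x_1,\ldots,x_k,y_1,\ldots,y_k$ are distinct (in particular handling $y_a\neq x_a$ via the extra averaging), whereas the paper only checks that the $y_i$ are distinct among themselves.
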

\begin{proof}
 We first establish the following claim, which will be used iteratively to build a large half graph from a large bipartite induced matching.
 
\begin{claim}\label{cl:buildHalf}
    Let $a_1,\ldots,a_q$ and $b_1$ be vertices with $q\ge 2/\eps$, $a_1b_1\in E(G)$ and $a_ib_1\notin E(G)$ for all $2\le i\le q$. Then there exists a vertex $c_1$ such that $a_1c_1\notin E(G)$ and $a_ic_1\in E(G)$ for $2\le i\le \eps q/2+1$. 
\end{claim}
 \begin{poc}
     Consider the set of non-adjacent pairs $\{a_{i},b_{1}\}$, $2\le i\le q$. By assumption, for each such non-adjacent pair there are at least $\varepsilon n^{r-2}$ many $(r-2)$-cliques in $G[N(a_{i},b_{1})]$. Then, by the pigeonhole principle, there exists a copy $K$ of $(r-2)$-clique lying in at least $\frac{(q-1)\varepsilon n^{r-2}}{\binom{n}{r-2}}\ge \eps q/2$ many  common neighborhood of distinct pairs $\{a_i,b_{1}\}$. By relabelling if necessary, we may assume that $K$ lies in $G[N(a_{i},b_{1})]$ for $2\le i\le \eps q/2+1$. Note that, the vertex $a_{1}$ cannot be adjacent to all vertices in $K$ for otherwise $K$ together with $a_{1}$ and $b_{1}$ would form a copy of $K_{r}$. Therefore, we can pick a vertex $c_{1}$ in $K$ such that $a_{1}c_{1}\notin E(G)$ and $a_{i}c_1\in E(G)$ for $2\le i\le \eps q/2+1$ as desired.
 \end{poc}
 
We now build a large half graph. Let $\{x_iz_i\}_{i\in[t]}$ be a bipartite induced matching in $G$. In the first round, applying~\cref{cl:buildHalf} with $(a_i,q,b_1)=(x_i,t,z_1)$, we obtain a vertex $y_1$ such that $x_1y_1\notin E(G)$ and $x_iy_1\in E(G)$ for $2\le i\le \eps t/2+1$. In general, for $2\le j\le k$, in the $j$-th round, we apply~\cref{cl:buildHalf} with $(a_i,q,b_1)=(x_{i+j-1},(\frac{\eps}{2})^{j-1}t,z_j)$, we obtain a vertex $y_j$ such that $x_jy_j\notin E(G)$ and $x_iy_j\in E(G)$ for $j+1\le i\le (\frac{\eps}{2})^{j}t+j$. Since $t\ge (\frac{2}{\eps})^k$, we can indeed invoke~\cref{cl:buildHalf} for $k$ rounds to obtain vertices $y_1,\ldots, y_k$. Note that all $y_i$ vertices are distinct as for any $1\le j<i\le k$, $x_iy_j\in E(G)$ but $x_iy_i\notin E(G)$.
Then, $\{x_i,y_i:~i\in[k]\}$ induces a half graph in $\C H_k$ with $k=\log t/\log\frac{2}{\eps}$ as desired.
\end{proof}

\cref{thm:BoundedInducedMatching} now follows immediately from~\cref{lemma:NoHalfGraph,lemma:ObstructionToIDM}.

\subsection{Beyond $\chi$-boundedness, an $(r,2)$-theorem for $\C C(G)$}\label{sec:pq-C}
In this subsection, we prove~\cref{thm:pq-for-C}, which  strengthens~\cref{thm:main0,thm:main-rephased}. The idea is to utilize weak $\eps$-nets for the convexity space $\C C(G)$ instead of $\eps$-nets for $\C B(G)$. Then we use the equivalence of weak $\eps$-nets theorem (\cref{thm:weak-eps-net-C}), Radon's lemma (\cref{thm:Radon-C}) and fractionally Helly theorem (\cref{thm:frac-Hally-C}) for a convexity space to derive the desired $(r,2)$-theorem for $\C C(G)$.

We shall need the following results for convexity spaces.

\begin{lemma}[Weak $\eps$-net $\Rightarrow$ Radon~\cite{2020DCGWeakNetRadon}]\label{lem:weak-eps-Radon}
Let $(X,\C C)$ be a convexity space. If the Radon number of $\C C$ is greater than $r>0$, then there is a probability measure $\mu$ on $X$ such that every weak $\eps$-net for $\C C$ with respect to $\mu$ has size at least $(1-2\eps)r$.
\end{lemma}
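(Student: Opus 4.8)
The plan is to let $\mu$ be the uniform measure on a Radon-independent set of size $r$ and to observe that a weak $\eps$-net then furnishes a proper colouring of a Kneser graph, whose chromatic number (by Lov\'asz--Kneser) supplies the desired bound.

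By definition of the Radon number, $r(\C C)>r$ means that some set of size $\ge r$ has no Radon partition; fix an $r$-element subset $Y$ of such a set — then $Y$ itself has no Radon partition, since a Radon partition of $Y$ extends to any superset. Let $\mu$ be the uniform probability measure on $Y$. The crucial reformulation of Radon-independence is: for every point $x\in X$, the (automatically upward-closed) family $\C R_x:=\{S\subseteq Y:\ x\in\M{conv}\,S\}$ is \emph{intersecting}. Indeed, two disjoint members $S_1,S_2\in\C R_x$ would give a Radon partition $(S_1,\ Y\setminus S_1)$ of $Y$, as $\M{conv}(Y\setminus S_1)\supseteq\M{conv}\,S_2\ni x$.

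Now let $N$ be a weak $\eps$-net for $\C C$ with respect to $\mu$. If $(1-2\eps)r<1$ the statement is trivial (when $\eps\le 1$, $\M{conv}\,Y$ is $\mu$-heavy, forcing $N\neq\varnothing$). Otherwise set $s:=\lceil\eps r\rceil$; in this range $2s\le r$. For every $S\in\binom{Y}{s}$ we have $\mu(\M{conv}\,S)\ge|S|/r=s/r\ge\eps$, so $N\cap\M{conv}\,S\neq\varnothing$, i.e.\ $S\in\C R_x$ for some $x\in N$. Hence the families $\C R_x^{(s)}:=\C R_x\cap\binom{Y}{s}$, $x\in N$, cover $\binom{Y}{s}$, and by the previous paragraph each is an intersecting family of $s$-sets, i.e.\ an independent set of the Kneser graph $KG_{r,s}$ (vertices $\binom{Y}{s}$, two $s$-sets adjacent iff disjoint). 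Colouring each $s$-set by one $x\in N$ with $S\in\C R_x^{(s)}$ is a proper $|N|$-colouring of $KG_{r,s}$, because two disjoint $s$-sets cannot both lie in a single intersecting $\C R_x^{(s)}$. Therefore $|N|\ge\chi(KG_{r,s})$.

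By the Lov\'asz--Kneser theorem $\chi(KG_{r,s})=r-2s+2$, so $|N|\ge r-2s+2=r-2\lceil\eps r\rceil+2\ge(1-2\eps)r$. The one genuinely nontrivial ingredient is the combination of (i) the reformulation turning Radon-independence of $Y$ into intersectingness of each $\C R_x$, and (ii) recognising that a weak $\eps$-net is exactly a covering of $\binom{Y}{s}$ by independent sets of a Kneser graph; from there the known value of the Kneser chromatic number is precisely what produces the factor $1-2\eps$. (One should also verify the boundary regime $2s>r$, where $KG_{r,s}$ is edgeless and the asserted bound is below $1$, hence trivially valid.)
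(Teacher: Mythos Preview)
The paper does not supply its own proof of this lemma; it is quoted from the cited reference~\cite{2020DCGWeakNetRadon} and used as a black box. Your argument is correct and is essentially the proof given in that source: take $\mu$ uniform on a Radon-independent $r$-set $Y$, observe that Radon-independence forces each family $\C R_x=\{S\subseteq Y:x\in\M{conv}\,S\}$ to be intersecting, and conclude that a weak $\eps$-net yields a proper colouring of the Kneser graph $KG_{r,\lceil\eps r\rceil}$, whence the bound via Lov\'asz--Kneser. Your handling of the boundary cases (the trivial regime $(1-2\eps)r<1$ and the verification that $(1-2\eps)r\ge 1$ forces $2s\le r$ so that the Kneser formula applies) is also fine.
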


\begin{theorem}[Radon $\Rightarrow$ fractional Helly~\cite{2021IJMHolmsen}]\label{thm:Radon-FracHelly}
  Every convexity space with Radon number $r$ has fractional Helly number at most $r^{r^{\lceil\log_2r\rceil}}$.
\end{theorem}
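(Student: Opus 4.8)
The plan is to prove this the way one derives fractional Helly from iterated Radon partitions: bounded Radon number forces a Tverberg-type partition theorem, this yields a selection/colourful-Helly statement, and a random-colouring reduction then produces the fractional Helly property; the tower-type bound $r^{r^{\lceil\log_2 r\rceil}}$ emerges from the depth of the iteration. First I would record the cheap reduction: by Levi's theorem the Helly number satisfies $h(\C C)<r(\C C)\le r$, so one may freely use that any sub-family all of whose $(r-1)$-tuples intersect is itself intersecting --- this lets us pass between ``all $k$-tuples intersect'' and ``everything intersects'' once $k$ is large, which is needed at the end.

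Next, the combinatorial core. Let $N_m$ be the least $N$ such that every $N$ points of $X$ admit a partition into $m$ nonempty classes whose convex hulls share a common point; by definition $N_2\le r$. Iterating the Radon partition --- break a large point set into $r$ blocks, apply the partition hypothesis inside each block to collapse it to a single representative point lying in many convex hulls, then apply Radon once more to the $r$ representatives and merge the two halves --- gives a recursion essentially of the form $N_{2m}\le r\cdot N_m$, hence $N_m\lesssim r^{\lceil\log_2 m\rceil}$. The subtle point here is that in the purely axiomatic setting the only tool is the monotone convex-hull operator, so one must check carefully that the merged partition genuinely has $m$ nonempty classes with a point common to all their hulls. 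Fixing $m=m(r)$ appropriately, a double count over the $m$-element subsets of a point set and their Tverberg partitions yields a first-selection lemma (some point of $X$ lies in the convex hulls of a constant fraction of all $\ell$-subsets, $\ell=\ell(r)$ fixed), or equivalently a colourful Helly number $k=k(r)$: whenever $k$ colour classes of convex sets are such that every rainbow $k$-tuple intersects, one colour class is intersecting. A Ramsey-type argument on $\ell$- or $k$-subsets enters at this stage, and is the second contributor to the tower-type growth.

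Finally, the standard Bárány/Kalai--Meshulam reduction converts the colourful statement to fractional Helly: given a finite $\C F$ with at least $\alpha\binom{|\C F|}{k}$ intersecting $k$-tuples, colour $\C F$ uniformly at random with $k$ colours, note that in expectation a fraction $\alpha'=\alpha'(\alpha,k)$ of rainbow $k$-tuples remain intersecting, invoke the colourful Helly bound to get a monochromatic intersecting class, and observe that such a class has $\ge\beta|\C F|$ members with $\beta=\beta(\alpha,r)$; the Helly-number bound recorded at the start makes this run cleanly. Tracking the constants through these steps --- with $m$, $\ell$, $k$ chosen as the right functions of $r$ --- lands at fractional Helly number at most $r^{r^{\lceil\log_2 r\rceil}}$.

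The main obstacle is the middle step: extracting a \emph{uniform} colourful (or selection) statement from the raw Tverberg partitions, whose number of colours $k$ --- equivalently, the eventual fractional Helly number --- is a fixed function of $r$ rather than growing with $|\C F|$, all with no geometry available, only the abstract Radon axiom and the monotone hull operator. This is where the careful Ramsey/double-counting has to be carried out, and where the $\lceil\log_2 r\rceil$ in the exponent is genuinely forced by the recursion depth; by comparison the iterated-Radon step and the final random-colouring step are routine once the correct intermediate statement has been isolated.
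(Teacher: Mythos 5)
This theorem is cited by the paper from Holmsen and Lee (Israel J.\ Math., 2021); the paper does not provide its own proof, so there is no in-paper argument against which to compare your sketch line by line.

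That said, a few observations about the sketch itself. The opening reduction via Levi's inequality $h(\C C)<r(\C C)$ and the iterated-Radon derivation of a Tverberg-type bound $N_m\lesssim r^{\lceil\log_2 m\rceil}$ are both correct, and the latter is indeed the first ingredient in the Holmsen--Lee proof. The closing step --- Alon--Kalai--Matou\v{s}ek--Meshulam-style random colouring to pass from a colourful Helly statement to fractional Helly --- is also standard. The problem is the middle step, which you yourself flag as ``the main obstacle'': you need to extract from the Tverberg theorem, in a purely axiomatic convexity space, a \emph{uniform} colourful Helly (or first-selection) statement whose number of colours is a fixed function of $r$. This is precisely the entire technical content of the Holmsen--Lee result, and you do not supply an argument for it; you only assert that a ``double count over the $m$-element subsets and their Tverberg partitions'' together with ``a Ramsey-type argument'' yields it. In $\I R^d$ the selection lemma follows from Tverberg by Sarkaria-type tensor tricks or by the moment curve, but neither is available axiomatically, and the passage from partition theorems to selection/colourful statements in abstract convexity spaces is genuinely delicate --- it is not a routine Ramsey application, and I would not take it on faith that a colourful Helly theorem with $O_r(1)$ colours holds at the generality of an arbitrary convexity space with Radon number $r$. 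In fact, my understanding is that Holmsen and Lee route around this: after establishing the Tverberg bound, they prove fractional Helly directly by a bespoke combinatorial argument on intersection patterns rather than by exhibiting a colourful Helly intermediate. So while your outer skeleton (Levi; iterated Radon; combinatorial reduction; Helly cleanup) matches the known architecture, the crucial middle lemma is left as a black box, and the particular intermediate you propose (colourful Helly for abstract convexity spaces) is not obviously available. As it stands the proposal is a plausible roadmap but not a proof.
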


\begin{theorem}[\cite{2020DCGWeakNetRadon}]\label{thm:WeakEpsRadon}
    Let $X$ be a set and $\mathcal{F}\subseteq 2^X$ be a compact family with VC-dimension $d$ and Helly number $h$. Then for every $\eps>0$, the intersection closure $\C F^{\cap}$ has weak $\varepsilon$-nets of size at most $(120h^{2}/\eps)^{4hd\log(1/\eps)}$.
\end{theorem}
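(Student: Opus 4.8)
\textbf{Proof plan for \cref{thm:WeakEpsRadon}.}
The plan is to prove the statement by a two-stage argument: first produce a single good net for the original family $\C F$ using the $\eps$-net theorem, then close it up under intersection to obtain a weak net for $\C F^{\cap}$ by using the bounded Helly number to control how deep intersections can be.

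First I would recall the exact framework. Fix a finitely supported probability measure $\mu$ on $X$. We want a set $N\subseteq X$, of size bounded as claimed and independent of $\mu$, that meets every $C\in\C F^{\cap}$ with $\mu(C)\ge\eps$. The starting observation is that $\C F^{\cap}$ has the same VC-dimension behavior controlled by $\C F$: standard results (e.g. Matou\v sek's book~\cite{2022BookMatousek}) give that if $\VC(\C F)=d$ then the VC-dimension of $\C F^{\cap}$ restricted to $k$-fold intersections is $O(dk\log k)$; but since the Helly number is $h$, every convex set of $\C F^{\cap}$ with positive $\mu$-measure is realized as an intersection of at most — and this is the crucial point — a controlled number of members of $\C F$, after we pass to a net. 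Concretely, by Helly number $h$: if $C=\bigcap_{i\in I}F_i$ with $\mu(C)\ge\eps$, then in particular every $h$-subfamily of $\{F_i\}_{i\in I}$ is intersecting (indeed their common intersection contains $C\neq\varnothing$), so the structure of deep intersections is forced.

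The key steps, in order, would be: (1) Apply the $\eps$-net theorem (\cref{thm:eps-net}) — or more precisely the classical $\eps$-net theorem for VC-dimension $d$ families — to $\C F$ with parameter $\eps' = \eps/(2h)$ or similar, obtaining a $(weak)$ net $N_0$ for $\C F$ of size $O((d/\eps')\log(1/\eps'))$; note $N_0$ does not depend on $\mu$. (2) For the closure: given $C=\bigcap_{i\in I}F_i\in\C F^{\cap}$ with $\mu(C)\ge\eps$, use the Helly-number-$h$ hypothesis to argue that $C$ can be ``approximately realized'' by an intersection of at most $h$ many sets from a net-augmented family, or that among the $F_i$'s there is a subfamily of size $\le h$ whose intersection $C'\supseteq C$ still has $\mu(C')\ge\eps$ — this is where one pays a factor growing with $h$. (3) Build the final net $N$ as the union of nets (from step 1, with a slightly smaller $\eps$) over all ``$h$-fold intersection patterns'', i.e. run the $\eps$-net machinery on the set system $\C F^{\le h}$ of at-most-$h$-fold intersections of $\C F$, which has VC-dimension $O(hd\log h)$. (4) Optimize the constants: choosing the inner $\eps$-parameter as $\eps/(2h^2)$ or so, and bounding $\binom{d'}{\,\cdot\,}$-type quantities, one arrives at a bound of the shape $(120h^2/\eps)^{4hd\log(1/\eps)}$; the exponent $4hd\log(1/\eps)$ matches $\VC(\C F^{\le h})\cdot\log(1/\eps)$ up to the absolute constant, and the base $120h^2/\eps$ absorbs the Helly-number overhead and the constant from the $\eps$-net theorem.

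I expect the main obstacle to be step (2): making precise and quantitative the passage from ``$C$ is an arbitrary (possibly very deep) intersection of positive measure'' to ``$C$ is captured by an at-most-$h$-fold intersection pattern after restricting to the net.'' The naive use of Helly only says every $h$-subfamily intersects, not that the full intersection equals an $h$-fold one; the fix is the standard trick of working with the net $N_0$ as the new ground set — on $N_0$, if a convex set $C'\in\C F^{\cap}$ has $\mu(C')\ge\eps$ then $C'\cap N_0\neq\varnothing$, and one shows that it suffices for the final net to pierce all members of $\C F^{\cap}$ that are ``maximal'' with respect to containing a fixed point of $N_0$, and these are governed by the Helly number. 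Compactness of $\C F$ is used exactly here to guarantee that the relevant intersections are attained and that ``positive measure'' intersections behave well under limits. The remaining arithmetic (step 4) is routine bookkeeping of constants and logarithms and should not present difficulties.
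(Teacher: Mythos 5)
This statement is not proved in the paper at all: it is quoted verbatim from the cited reference \cite{2020DCGWeakNetRadon}, so there is no internal argument to compare yours against, and your proposal has to stand on its own. It does not. The core of your plan is the reduction in steps (2)--(3): pierce only the sets in $\mathcal{F}^{\le h}$ (intersections of at most $h$ members of $\mathcal{F}$) of measure at least $\eps$, using the $\eps$-net theorem on $\mathcal{F}^{\le h}$, and then transfer this to all of $\mathcal{F}^{\cap}$ via the Helly number. This transfer is exactly what fails. The Helly number $h$ controls \emph{nonemptiness} of deep intersections (every $h$-wise intersecting subfamily has a common point); it says nothing about a deep intersection $C=\bigcap_{i\in I}F_i$ being equal to, or even approximable from inside by, an $h$-fold intersection. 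Your observation that some $C'=\bigcap_{i\in I'}F_i$ with $|I'|\le h$ satisfies $C'\supseteq C$ and $\mu(C')\ge\eps$ is true but useless: a point of your net lying in $C'$ need not lie in $C$, so piercing all shallow intersections does not pierce $\mathcal{F}^{\cap}$. The ``fix'' you sketch is circular: asserting that on $N_0$ ``if $C'\in\mathcal{F}^{\cap}$ has $\mu(C')\ge\eps$ then $C'\cap N_0\neq\varnothing$'' is precisely the statement to be proved, and $N_0$ is only an $\eps$-net for $\mathcal{F}$, whose intersection closure can have unbounded VC-dimension (half-spaces versus convex sets), so no form of \cref{thm:eps-net} applies to it. A sanity check exposes the problem: with $\mathcal{F}$ the half-spaces in $\mathbb{R}^k$ one has $d,h\le k+1$, and your argument would produce weak $\eps$-nets for all convex sets in $\mathbb{R}^k$ of size $O_k\big(\tfrac{1}{\eps}\log\tfrac{1}{\eps}\big)$, wildly below anything known and far stronger than the quasi-polynomial bound $(120h^2/\eps)^{4hd\log(1/\eps)}$ you are asked to prove; the shape of that bound, with the factor $\log(1/\eps)$ in the exponent, already signals that the actual proof in \cite{2020DCGWeakNetRadon} is a recursion of depth $\log(1/\eps)$ (repeatedly halving the measure threshold and paying a factor of the form $(O(h^2/\eps))^{O(hd)}$ per level), not a single application of the $\eps$-net theorem to a bounded-depth family.

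Two smaller points. First, your remark that $N_0$ ``does not depend on $\mu$'' is wrong and conflicts with the definition in the paper: a weak $\eps$-net is chosen with respect to a given finitely supported measure $\mu$, and only the \emph{size bound} $m(\mathcal{C},\eps)$ is uniform over $\mu$. Second, compactness of $\mathcal{F}$ is not there to make ``positive-measure intersections behave well under limits'' in your sense; in the cited argument it guarantees that the relevant (possibly infinite) intersections are determined by finite subfamilies, which is what makes the Helly hypothesis usable at all. As it stands, the proposal would need a genuinely new idea at its central step before any of the bookkeeping in step (4) becomes relevant.
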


We will use the following consequence of~\cref{thm:WeakEpsRadon}.

\begin{cor}\label{cor:weak-eps-net}
Let $X$ be a set and $\mathcal{F}\subseteq 2^X$ be a compact family with VC-dimension $d$ and Helly number $h$. Then for any finite set system $\C G\subseteq \C F^{\cap}$, we have $\tau(\mathcal{G})\le (120h^{2}\tau^{*}(\C G))^{4hd\log\tau^{*}(\C G)}$.
\end{cor}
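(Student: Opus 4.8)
The plan is to deduce \cref{cor:weak-eps-net} from \cref{thm:WeakEpsRadon} by running the weak $\eps$-net with the probability measure obtained from normalizing an optimal fractional transversal of $\C G$. Fix a finite $\C G\subseteq \C F^{\cap}$ and set $\tau^*:=\tau^*(\C G)$, with ground set $V:=\bigcup_{A\in\C G}A$; we may assume every $A\in\C G$ is nonempty (otherwise $\tau(\C G)=\tau^*(\C G)=\infty$ and the bound is vacuous), so $1\le\tau^*\le|\C G|<\infty$. By LP duality ($\tau^*=\nu^*$, as recorded earlier) the transversal linear program of the finite system $\C G$ admits an optimal fractional transversal $f\colon V\to[0,1]$ of finite support with $\sum_{v}f(v)=\tau^*$ (take a vertex of the feasible polyhedron, whose support has size at most $|\C G|$). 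Define the finitely supported probability measure $\mu$ on $X$ by $\mu(\{v\})=f(v)/\tau^*$ for $v\in V$ and $\mu(X\setminus V)=0$. Then for every $A\in\C G$,
\[
\mu(A)=\sum_{v\in A}\frac{f(v)}{\tau^*}\ \ge\ \frac{1}{\tau^*},
\]
since $f$ is a fractional transversal of $\C G$.

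Now apply \cref{thm:WeakEpsRadon} to the compact family $\C F$ (of VC-dimension $d$ and Helly number $h$) with $\eps:=1/\tau^*$ and the measure $\mu$ above. It provides a weak $\eps$-net $N$ for $\C F^{\cap}$ with respect to $\mu$ of size at most $(120h^{2}/\eps)^{4hd\log(1/\eps)}=(120h^{2}\tau^*)^{4hd\log\tau^*}$. Because $\C G\subseteq\C F^{\cap}$ and $\mu(A)\ge 1/\tau^*=\eps$ for every $A\in\C G$, the defining property of a weak $\eps$-net yields $N\cap A\neq\varnothing$ for all $A\in\C G$; that is, $N$ is a transversal of $\C G$. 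Hence
\[
\tau(\C G)\ \le\ |N|\ \le\ (120h^{2}\tau^{*}(\C G))^{4hd\log\tau^{*}(\C G)},
\]
which is the desired inequality.

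No step should require a genuinely new idea; the content of the corollary is precisely the translation of ``small weak $\eps$-nets exist for $\C F^{\cap}$'' into ``the fractional transversal number controls the transversal number.'' The only points needing a little care are bookkeeping: ensuring $\mu$ is finitely supported so that \cref{thm:WeakEpsRadon} is applicable (handled by choosing a basic optimal solution of the transversal LP), and the boundary case $\tau^*=1$, where $\log\tau^*=0$ and the asserted bound reads $\tau(\C G)\le 1$. In that case $\mu(A)=1$ for every $A\in\C G$, and invoking \cref{thm:WeakEpsRadon} with any $\eps\in(0,1)$ close enough to $1$ produces a weak $\eps$-net of size at most $(120h^2/\eps)^{4hd\log(1/\eps)}<2$, hence of size $1$, which hits every set of $\C G$; so $\tau(\C G)=1$ as required.
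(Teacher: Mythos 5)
Your proof is correct and follows essentially the same route as the paper's: normalize an optimal fractional transversal of $\C G$ to obtain a finitely supported probability measure $\mu$ with $\mu(A)\ge 1/\tau^*(\C G)$ for every $A\in\C G$, then apply \cref{thm:WeakEpsRadon} with $\eps=1/\tau^*(\C G)$ to conclude that a weak $\eps$-net is a transversal of $\C G$ of the claimed size. The only differences are cosmetic bookkeeping (you obtain finite support via a basic optimal LP solution where the paper tacitly takes a rational-valued one, and you separately treat the boundary case $\tau^*=1$, which the formula already handles since $\log\tau^*=0$ there).
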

\begin{proof}
  Let $t^*=\tau^*(\C G)$ and $f(\cdot)$ be an optimal fractional transversal of $\C G$ assigning each point $p\in X$ with weight $m_p/D$, where $m_{p},D\in\I{N}$. So, $\sum\limits_{p\in X}\frac{m_{p}}{D}=t^*$ and $\sum\limits_{p\in A}\frac{m_{p}}{D}\ge 1$ for each $A\in\mathcal{G}$. Now define a finitely supported probability measure $\mu$ on $X$ by setting $\mu(p)=\frac{m_p/D}{t^*}$. Then, for each $A\in\mathcal{G}$, $\mu(A)=\sum\limits_{p\in A}\frac{m_p/D}{t^*}\ge\frac{1}{t^*}$. Thus, any weak $\frac{1}{t^*}$-net for $\C F^{\cap}$ with respect to $\mu$ is a transversal of $\mathcal{G}$. The desired bound on $\tau(\C G)$ then follows from~\cref{thm:WeakEpsRadon} with $\eps_{\ref{thm:WeakEpsRadon}}=1/t^*$.
\end{proof}

Combining~\cref{thm:WeakEpsRadon},~\cref{cor:weak-eps-net} with~\cref{prop:HellyNumber2} and~\cref{lem:VC-Matching}, we obtain the following weak $\eps$-net theorem for the convexity space $\C C(G)$ derived from a graph $G$.

\begin{theorem}[Weak $\eps$-net for $\C C(G)$]\label{thm:weak-eps-net-C}
    For any graph $G$ and $\eps>0$, the convexity space $\C C(G)$ has weak $\eps$-nets of size at most $(480/\eps)^{8\match(G)\log(1/\eps)}$. In other words, for any set system $\C G\subseteq \C C(G)$, we have
    $$\tau(\C G)\le (480\tau^{*}(\C G))^{8\match(G)\log\tau^{*}(\C G)}.$$
\end{theorem}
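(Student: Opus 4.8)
The plan is to derive \cref{thm:weak-eps-net-C} as an essentially immediate consequence of \cref{cor:weak-eps-net}, with the only real work being to check that the hypotheses of that corollary are met by a suitable family whose intersection closure is $\C C(G)$, and that the resulting constants simplify to the stated ones. First I would take $\C F = \C B(G)$, so that $\C F^{\cap} = \C B(G)^{\cap} = \C C(G)$ by definition. The family $\C B(G)$ lives on the finite ground set $\MIS(G)$, hence is trivially compact, so that hypothesis is free. For the Helly number, \cref{prop:HellyNumber2} gives $h(\C C(G)) = 2$ whenever $G$ has an edge (and if $G$ is edgeless the statement is vacuous, since then $\C B(G)$ is a single set and everything is trivial); one should note that \cref{cor:weak-eps-net} is phrased with the Helly number $h$ of $\C F$ itself, and since $h(\C B(G)) \le h(\C B(G)^{\cap}) = 2$ — or one simply applies \cref{thm:WeakEpsRadon} to the closure directly — we may take $h = 2$.

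For the VC-dimension, \cref{lem:VC-Matching} tells us that either $\VC(\C B(G)) \le 2$ or $\VC(\C B(G)) \le \match(G)$; in all cases $\VC(\C B(G)) \le \max\{2, \match(G)\}$, and since any graph with an edge has $\match(G) \ge 1$ and we may freely replace the bound by $\match(G) + O(1)$ absorbed into constants, I would record $d \le 2\match(G)$ (using $\match(G)\ge 1$) as a clean uniform bound, or just carry $d = \max\{2,\match(G)\}$ through the arithmetic. Plugging $h = 2$ and this value of $d$ into \cref{thm:WeakEpsRadon} gives weak $\eps$-nets of size at most $(120 \cdot 4 / \eps)^{4 \cdot 2 \cdot d \cdot \log(1/\eps)} = (480/\eps)^{8d\log(1/\eps)}$, and with $d \le \match(G)$ in the interesting regime (or the crude $d \le 2\match(G)$ yielding a constant adjustment) this is the claimed $(480/\eps)^{8\match(G)\log(1/\eps)}$. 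The second, "in other words" form of the statement is then exactly \cref{cor:weak-eps-net} with $\eps = 1/\tau^*(\C G)$: a weak $(1/\tau^*(\C G))$-net for $\C C(G)$ with respect to the measure built from an optimal fractional transversal is a transversal of $\C G$, as spelled out in the proof of \cref{cor:weak-eps-net}, giving $\tau(\C G) \le (480\tau^*(\C G))^{8\match(G)\log\tau^*(\C G)}$.

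The only mildly delicate point — and the place I would be most careful — is bookkeeping around the case distinction in \cref{lem:VC-Matching}: that lemma bounds $\VC(\C B(G))$ by $\match(G)$ only once the VC-dimension is at least $3$, so for the constant in the exponent to come out as $8\match(G)$ rather than $8\max\{2,\match(G)\}$ one is implicitly using that the bound is only interesting (non-trivial) when $\match(G)$ is large, or one simply accepts the harmless constant slack. Similarly, one should make sure $\log\tau^*(\C G) \ge 1$, i.e. $\tau^*(\C G) \ge e$, for the exponent to be meaningful; when $\tau^*(\C G)$ is bounded the conclusion $\tau(\C G) = O_{r,\match(G)}(1)$ is immediate from \cref{thm:eps-net} and bounded VC-dimension anyway, so there is no real loss. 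None of this requires new ideas beyond assembling \cref{prop:HellyNumber2}, \cref{lem:VC-Matching}, and \cref{thm:WeakEpsRadon}/\cref{cor:weak-eps-net}; the substantive content was already done in those ingredients, and this theorem is the packaging step that feeds into the proof of \cref{thm:pq-for-C} via the Radon/fractional-Helly equivalences advertised in \cref{sec:pq-C}.
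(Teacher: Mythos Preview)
Your proposal is correct and follows exactly the paper's approach: the theorem is stated immediately after \cref{thm:WeakEpsRadon} and \cref{cor:weak-eps-net} as a direct combination of those with \cref{prop:HellyNumber2} and \cref{lem:VC-Matching}, with no further argument given. Your careful handling of the edge cases (small VC-dimension, edgeless $G$, small $\tau^*$) is more than the paper itself provides.
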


Applying~\cref{thm:weak-eps-net-C} and~\cref{lem:weak-eps-Radon} with $\eps_{\ref{thm:weak-eps-net-C}}=\eps_{\ref{lem:weak-eps-Radon}}=1/4$, we infer that the Radon number of the convexity space $\C C(G)$ is bounded by the bipartite induced matching number of $G$.

\begin{theorem}[Radon for $\C C(G)$]\label{thm:Radon-C}
  For any graph $G$, the Radon number of the convexity space $\C C(G)$ is at most $2^{300\match(G)}$.
\end{theorem}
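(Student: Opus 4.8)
The plan is to derive \cref{thm:Radon-C} directly from the weak $\eps$-net bound for $\C C(G)$ established in \cref{thm:weak-eps-net-C} together with the implication ``weak $\eps$-net $\Rightarrow$ Radon'' of \cref{lem:weak-eps-Radon}, instantiated at a well-chosen value of $\eps$. The strategy is a short proof by contradiction: suppose the Radon number $r(\C C(G))$ is larger than some threshold $r_0$ that we will pin down. Then \cref{lem:weak-eps-Radon} hands us a probability measure $\mu$ on $X = \MIS(G)$ such that every weak $\eps$-net for $\C C(G)$ with respect to $\mu$ has size at least $(1-2\eps)r_0$. On the other hand, \cref{thm:weak-eps-net-C} tells us $\C C(G)$ always admits a weak $\eps$-net of size at most $(480/\eps)^{8\match(G)\log(1/\eps)}$. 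Combining these two bounds forces $(1-2\eps)r_0 \le (480/\eps)^{8\match(G)\log(1/\eps)}$, i.e. $r_0 \le \frac{1}{1-2\eps}(480/\eps)^{8\match(G)\log(1/\eps)}$, so the Radon number is at most this quantity.

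The only remaining work is arithmetic bookkeeping: choose $\eps$ to make the right-hand side as clean as possible while respecting the constraint $\eps < 1/2$ needed for $(1-2\eps)>0$. Taking $\eps = 1/4$ (as already flagged in the text preceding the statement) gives $\frac{1}{1-2\eps} = 2$ and $(480/\eps)^{8\match(G)\log(1/\eps)} = 1920^{\,8\match(G)\log 4}$. Writing $t = \match(G)$, we have $1920^{8t\log 4} = \exp\bigl(8t\log 4 \cdot \log 1920\bigr)$. Since $\log 4 < 1.3863$ and $\log 1920 < 7.561$, the exponent is at most $8 \cdot 1.3863 \cdot 7.561 \cdot t < 83.9\, t < 300t$ (with room to spare once we absorb the leading factor of $2$, since $2 < e^{300t - 84t}$ trivially for $t\ge 1$, and the case $t=0$, i.e.\ $G$ edgeless, is degenerate). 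Hence $r(\C C(G)) \le 2^{300\match(G)}$. I would present this as a couple of lines: invoke \cref{thm:weak-eps-net-C} and \cref{lem:weak-eps-Radon} with $\eps = 1/4$, chain the inequalities, and simplify the constant.

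There is essentially no conceptual obstacle here — \cref{thm:weak-eps-net-C} has already done the heavy lifting of translating the graph-theoretic parameter $\match(G)$ (via \cref{lem:VC-Matching} and \cref{prop:HellyNumber2}) into a weak $\eps$-net size bound, and \cref{lem:weak-eps-Radon} is a black-box input. The one point requiring a sentence of care is the boundary behaviour: \cref{lem:weak-eps-Radon} is stated for convexity spaces with Radon number \emph{greater than} $r$, so to conclude a clean upper bound one argues contrapositively, assuming $r(\C C(G)) > 2^{300\match(G)}$ and deriving a weak $\eps$-net lower bound of $(1-1/2)\cdot 2^{300\match(G)} = 2^{300\match(G)-1}$, which must then exceed the upper bound $1920^{8\match(G)\log 4}$ from \cref{thm:weak-eps-net-C} — a contradiction once one checks $2^{300t-1} > 1920^{8t\log 4}$ for all relevant $t$, i.e.\ $300t - 1 > 8t\log 4 \cdot \log_2 1920$. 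Numerically $8\log 4 \cdot \log_2 1920 < 8 \cdot 1.3863 \cdot 10.91 < 121$, so $300t - 1 > 121t$ holds for every $t \ge 1$, and the edge case $\match(G)=0$ (no edges, so $\C C(G)$ trivial) can be dismissed separately. The main ``obstacle'', such as it is, is merely making sure the constant $300$ is genuinely large enough to absorb all the logarithmic slack; I would double-check the numerics $\log 1920 \approx 7.56$ and $\log 4 \approx 1.386$ once more when writing the final version.
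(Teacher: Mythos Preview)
Your proposal is correct and follows exactly the paper's approach: the paper's proof is the one-line remark immediately preceding the statement, namely ``Applying \cref{thm:weak-eps-net-C} and \cref{lem:weak-eps-Radon} with $\eps=1/4$,'' and you have simply unpacked that sentence and verified the arithmetic that makes the constant $300$ work. Your treatment of the contrapositive reading of \cref{lem:weak-eps-Radon} and the degenerate case $\match(G)=0$ is more careful than the paper's, but there is no difference in strategy.
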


\begin{rmk}
    Let us see the effect of bounded Radon number on the graph $G$ using the correspondence in~\cref{tab:table}. If $r(\C C(G))=r=O(1)$, then for any maximal independent sets $I_1,\ldots, I_r$, there is a partition $[r]=A\cup B$ such that~$K_{\cap_{a\in A} I_a}\cap K_{\cap_{b\in B} I_b}\neq \varnothing$, that is, $e(\cap_{a\in A} I_a,\cap_{b\in B} I_b)=0$. Note that between any two $I_j$ there is at least one edge by maximality. So the convexity space $\C C(G)$ having bounded Radon number reflects in a sense that $\MIS(G)$ has bounded structure.
\end{rmk}

A fractional Helly theorem for $\C C(G)$ now follows from~\cref{thm:Radon-C,thm:Radon-FracHelly}.

\begin{theorem}[Fractional Helly for $\C C(G)$]\label{thm:frac-Hally-C}
  For any graph $G$, the fractional Helly number of the convexity space $\C C(G)$ is at most $2^{2^{10^5\match(G)^2}}$.  
\end{theorem}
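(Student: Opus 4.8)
The plan is to combine the two results already established in this section: \cref{thm:Radon-C}, which bounds the Radon number of $\C C(G)$ in terms of $\match(G)$, and \cref{thm:Radon-FracHelly} of Holmsen, which converts any Radon number bound into a fractional Helly number bound. Writing $m:=\match(G)$, I would first dispose of the degenerate case $m=0$: then $G$ has no edge, $\MIS(G)$ is a single point, $\C C(G)=\{\varnothing,\MIS(G)\}$ is the trivial convexity space, and its fractional Helly number is $1$, comfortably below $2^{2^{0}}=4$. So assume $m\ge 1$ and set $R:=2^{300m}$; by \cref{thm:Radon-C} the Radon number $\rho:=r(\C C(G))$ satisfies $\rho\le R$.

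Next I would apply \cref{thm:Radon-FracHelly} to the convexity space $\C C(G)$, obtaining that its fractional Helly number is at most $\rho^{\rho^{\lceil\log_2\rho\rceil}}$. Since the map $x\mapsto x^{x^{\lceil\log_2 x\rceil}}$ is non-decreasing for integers $x\ge 1$ (the variable occurs only in base-or-exponent positions and $\lceil\log_2 x\rceil$ is itself non-decreasing), the bound $\rho\le R$ lets me replace $\rho$ by $R$, so the fractional Helly number of $\C C(G)$ is at most $R^{R^{\lceil\log_2 R\rceil}}$. All that then remains is the arithmetic of this tower. Since $R=2^{300m}$ we have $\lceil\log_2 R\rceil=300m$ and hence $R^{\lceil\log_2 R\rceil}=2^{300m\cdot 300m}=2^{9\cdot 10^4 m^2}$, so that
\[
R^{R^{\lceil\log_2 R\rceil}}
=2^{\,300m\cdot 2^{9\cdot 10^4 m^2}}
=2^{\,2^{\,9\cdot 10^4 m^2+\log_2(300m)}}
\le 2^{\,2^{10^5 m^2}},
\]
where the final inequality uses $\log_2(300m)\le 10^4 m^2$ for all $m\ge 1$. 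This is exactly the claimed bound $2^{2^{10^5\match(G)^2}}$.

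The hard part here is essentially non-existent: once \cref{thm:Radon-C} and \cref{thm:Radon-FracHelly} are granted, the proof reduces to bookkeeping with a double tower of exponents. The only spots demanding a little attention are the monotonicity remark (so that the upper bound $R$, rather than the exact Radon number, may be fed into Holmsen's formula) and treating the edgeless graph separately, since there $\match(G)=0$ and the convexity space is trivial anyway.
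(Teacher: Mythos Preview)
Your proposal is correct and follows exactly the paper's approach: the paper simply states that the result ``now follows from \cref{thm:Radon-C,thm:Radon-FracHelly}'' without spelling out the arithmetic, so you have in fact done more than the paper by verifying the tower of exponents. One tiny slip: in the edgeless case you write $2^{2^{0}}=4$, but $2^{2^{0}}=2^{1}=2$; this is harmless since the fractional Helly number there is $1\le 2$ anyway.
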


Finally, putting everything together, we can prove~\cref{thm:pq-for-C}.

\begin{proof}[Proof of~\cref{thm:pq-for-C}]
As any upper bound on Helly number or fractional Helly number on $\C C(G)$ is inherited by all its subfamilies, by~\cref{prop:HellyNumber2} and~\cref{thm:frac-Hally-C}, we see that $\C G$ satisfies $h(\C G)=2$ and $h^*(\C G)=O_{t}(1)$. Thus, from~\cref{thm:fractionaltrans}, we get that $\tau^*(\C G)=O_{r,t}(1)$. Then~\cref{thm:weak-eps-net-C} infers that $\tau(\C G)=O_{r,t}(1)$. Finally, the `In particular' part follows from~\cref{thm:BoundedInducedMatching}.
\end{proof}

\section{Distinguishing chromatic and homomorphism thresholds}\label{sec:Preliminary} 
Coincidentally, the chromatic and homomorphism thresholds for cliques have the same value. In this section, we prove~\cref{thm:VC-notfor-hom} via a construction to see the difference between these two problems.

Let us first see the role of VC-dimension. A graph $G$ has VC-dimension $d$ if the set system $\mathcal{F}:=\{N_{G}(v)\subseteq V(G):v\in V(G)\}$ induced by the neighborhood of vertices in $G$ has VC-dimension $d$. In this case of triangle, we know that $\delta_{\chi}(K_{3})=\delta_{\textup{hom}}(K_{3})=\frac{1}{3}$. {\L}uczak and Thomass{\'e}~\cite{2010ColoringViaVCDim} later proved that if a triangle-free graph has bounded VC-dimension, then any linear minimum degree $\delta(G)=\Omega(n)$ suffices to force bounded chromatic number (see~\cref{thm:VCToBoundedChromaticNumber} for a short new proof).

In contrast, \cref{thm:VC-notfor-hom} shows that VC-dimension is not sufficient to force a bounded triangle-free homomorphic image. 

We start with the following construction.

\begin{constr}\label{constr:VC-but-no-hom}
    Let $t\ge 3$ and $\Gamma$ be a $t$-vertex maximal triangle-free graph on vertex set $\{z_i\}_{i\in[t]}$.
    
    \begin{itemize}
        \item Let $M_{t,t}$ be the bipartite graph obtained from removing a perfect matching from the complete bipartite graph $K_{t,t}$. That is, $V(M_{t,t})=\{x_i,y_i\}_{i\in[t]}$ and $x_iy_j\in E(M_{t,t})$ if and only if $i\neq j$.

        \item Let $M_{t,t}\vee \Gamma$ be the graph obtained from taking vertex disjoint union of $M_{t,t}$ and $\Gamma$, and adding edges $x_iz_i$, $y_iz_i$ for all $i\in[t]$.

        \item Let $\Gamma[a,b]$ be the graph obtained from $M_{t,t}\vee \Gamma$ by blowing up each vertex in $M_{t,t}$ into $a$ copies and each vertex in $\Gamma$ into $b$ copies.
    \end{itemize}
\end{constr}

\begin{figure}[htbp]
\begin{minipage}[t]{0.48\textwidth}
\centering
       \begin{tikzpicture}[>=Stealth, line width=0.8pt, node distance=2cm,scale=0.3]
\node[draw, circle, inner sep=1pt, label={left:$x_{4}$}] (A1) at (-8,0) {};
\node[draw, circle, inner sep=1pt, label={left:$x_{3}$}] (B1) at (-8,2) {};
\node[draw, circle, inner sep=1pt, label={left:$x_{2}$}] (C1) at (-8,4) {};
\node[draw, circle, inner sep=1pt, label={left:$x_{1}$}] (D1) at (-8,6) {};
\node[draw, circle, inner sep=1pt, label={right:$y_{4}$}] (E1) at (8,0) {};
\node[draw, circle, inner sep=1pt, label={right:$y_{3}$}] (F1) at (8,2) {};
\node[draw, circle, inner sep=1pt,label={right:$y_{2}$}] (G1) at (8,4) {};
\node[draw, circle, inner sep=1pt, label={right:$y_{1}$}] (H1) at (8,6) {};

\node[draw, ellipse, minimum width=5cm, minimum height=2cm, label={below:$\Gamma$}] (L) at (0,-5) {};

\node[draw, circle, inner sep=1pt,label={below:$z_{2}$}] (L3) at (1,-5) {};

\draw[blue] (C1) -- (L3);
\draw[blue] (G1) -- (L3);

\draw[black] (B1) -- (G1);

\draw[dashed,black] (A1) -- (E1);
\draw[dashed,black] (B1) -- (F1);
\draw[dashed,black] (C1) -- (G1);
\draw[dashed,black] (D1) -- (H1);
   \end{tikzpicture}
\caption{$t=4$ in~\cref{constr:VC-but-no-hom}}
     \label{fig:GeneralConstruction}

\end{minipage}
\begin{minipage}[t]{0.48\textwidth}
\centering
\begin{tikzpicture}[>=Stealth, line width=0.8pt, node distance=2cm,scale=0.3]
\node[draw, circle, inner sep=1pt, label={left:$a_{4}$}] (A1) at (-4,0) {};
\node[draw, circle, inner sep=1pt, label={left:$a_{3}$}] (B1) at (-4,2) {};
\node[draw, circle, inner sep=1pt, label={left:$a_{2}$}] (C1) at (-4,4) {};
\node[draw, circle, inner sep=1pt, label={left:$a_{1}$}] (D1) at (-4,6) {};
\node[draw, circle, inner sep=1pt, label={right:$b_{4}$}] (E1) at (4,0) {};
\node[draw, circle, inner sep=1pt, label={right:$b_{3}$}] (F1) at (4,2) {};
\node[draw, circle, inner sep=1pt,label={right:$b_{2}$}] (G1) at (4,4) {};
\node[draw, circle, inner sep=1pt, label={right:$b_{1}$}] (H1) at (4,6) {};

\node[draw, circle, inner sep=1pt, label={left:$c_{1}^{(1)}$}] (A2) at (-8,-3) {};
\node[draw, circle, inner sep=1pt, label={left:$c_{1}^{(2)}$}] (B2) at (-8,-5) {};
\node[draw, circle, inner sep=1pt, label={left:$c_{2}^{(1)}$}] (C2) at (-8,-7) {};
\node[draw, circle, inner sep=1pt, label={left:$c_{2}^{(2)}$}] (D2) at (-8,-9) {};
\node[draw, circle, inner sep=1pt, label={right:$d_{1}^{(1)}$}] (E2) at (8,-3) {};
\node[draw, circle, inner sep=1pt, label={right:$d_{1}^{(2)}$}] (F2) at (8,-5) {};
\node[draw, circle, inner sep=1pt,label={right:$d_{2}^{(1)}$}] (G2) at (8,-7) {};
\node[draw, circle, inner sep=1pt, label={right:$d_{2}^{(2)}$}] (H2) at (8,-9) {};

\draw[dashed,red] (A1) -- (E1);
\draw[dashed,red] (B1) -- (F1);
\draw[dashed,red] (C1) -- (G1);
\draw[dashed,red] (D1) -- (H1);

\draw[orange] (D1) -- (A2);
\draw[orange] (D1) -- (B2);
\draw[orange] (H1) -- (A2);
\draw[orange] (H1) -- (B2);

\draw[green] (C1) -- (C2);
\draw[green] (C1) -- (D2);
\draw[green] (G1) -- (C2);
\draw[green] (G1) -- (D2);

\draw[cyan] (B1) -- (E2);
\draw[cyan] (B1) -- (F2);
\draw[cyan] (F1) -- (E2);
\draw[cyan] (F1) -- (F2);

\draw[yellow] (A1) -- (G2);
\draw[yellow] (A1) -- (H2);
\draw[yellow] (E1) -- (G2);
\draw[yellow] (E1) -- (H2);

\draw[blue] (A1) -- (F1);
\draw[blue] (A1) -- (G1);
\draw[blue] (A1) -- (H1);
\draw[blue] (B1) -- (G1);
\draw[blue] (B1) -- (E1);
\draw[blue] (B1) -- (H1);
\draw[blue] (C1) -- (H1);
\draw[blue] (C1) -- (F1);
\draw[blue] (C1) -- (E1);
\draw[blue] (D1) -- (E1);
\draw[blue] (D1) -- (F1);
\draw[blue] (D1) -- (G1);

\draw[red] (A2) -- (F2);
\draw[red] (A2) -- (G2);
\draw[red] (A2) -- (H2);
\draw[red] (A2) -- (E2);
\draw[red] (B2) -- (G2);
\draw[red] (B2) -- (E2);
\draw[red] (B2) -- (H2);
\draw[red] (B2) -- (F2);
\draw[red] (C2) -- (H2);
\draw[red] (C2) -- (F2);
\draw[red] (C2) -- (E2);
\draw[red] (C2) -- (G2);
\draw[red] (D2) -- (H2);
\draw[red] (D2) -- (E2);
\draw[red] (D2) -- (F2);
\draw[red] (D2) -- (G2);
\end{tikzpicture}
\caption{$K_{2,2}[1,2]$}
     \label{fig:1/6Example}
\end{minipage}
\end{figure}

It is easy to check that the graph $\Gamma[a,b]$ is maximal triangle-free with $(2a+b)t$ vertices and minimum degree $\min\{(t-1)a+b,~2a+\delta(\Gamma)b\}$. Furthermore, the VC-dimension of $\Gamma[a,b]$ depends only on $\Gamma$. Thus, for any $\Gamma$ with sufficiently large order $t$, bounded VC-dimension and minimum degree $\delta(\Gamma)=\Omega(t)$ and $a,b=O(1)$, $G=\Gamma[a,b]$ is a maximal triangle-free graph with minimum degree $\Omega(|G|)$ and bounded VC-dimension.

Next, we identify the obstruction for a graph to have a small triangle-free homomorphic image.

\begin{lemma}\label{lem:obstruction-hom}
    If a graph $G$ contains $t$ vertices $x_1,\ldots,x_t$ such that between any two distinct $x_i,x_j$ there is an induced 4-vertex path $P_4$, then $G$ has no triangle-free homomorphic image of size smaller than $t$.
\end{lemma}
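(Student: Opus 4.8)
The plan is to prove the contrapositive: suppose $\varphi: V(G) \to V(F)$ is a homomorphism onto a triangle-free graph $F$ with $|V(F)| < t$; I will show a contradiction. The crucial observation is that if $u$ and $v$ are joined by an induced $P_4$ in $G$, then $\varphi(u) \neq \varphi(v)$. Indeed, write the path as $u - w_1 - w_2 - v$ (an induced path, so the only edges among $\{u,w_1,w_2,v\}$ are $uw_1$, $w_1w_2$, $w_2v$). A homomorphism maps edges to edges, so $\varphi(u)\varphi(w_1), \varphi(w_1)\varphi(w_2), \varphi(w_2)\varphi(v) \in E(F)$. In particular $\varphi(u) \neq \varphi(w_1)$, $\varphi(w_1) \neq \varphi(w_2)$, $\varphi(w_2) \neq \varphi(v)$. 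If we had $\varphi(u) = \varphi(v)$, then $\varphi(u)\varphi(w_1)\varphi(w_2)$ would be a closed walk of length $3$ in $F$ with three pairwise distinct images — wait, I need $\varphi(u), \varphi(w_1), \varphi(w_2)$ pairwise distinct. We have $\varphi(w_1) \neq \varphi(w_2)$ and $\varphi(u) \neq \varphi(w_1)$; and $\varphi(u) = \varphi(v) \neq \varphi(w_2)$. So indeed $\varphi(u), \varphi(w_1), \varphi(w_2)$ are three distinct vertices of $F$ that are pairwise adjacent (since $\varphi(u)\varphi(w_1)$, $\varphi(w_1)\varphi(w_2)$, and $\varphi(w_2)\varphi(u) = \varphi(w_2)\varphi(v)$ are all edges), forming a triangle in $F$ — contradicting that $F$ is triangle-free.

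Having established that $\varphi(x_i) \neq \varphi(x_j)$ for every pair $i \neq j$, the map $\varphi$ restricted to $\{x_1, \ldots, x_t\}$ is injective, so $|V(F)| \geq t$, which contradicts $|V(F)| < t$. Hence every triangle-free homomorphic image of $G$ has size at least $t$, as claimed.

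The argument is short and essentially elementary; the only point requiring care is verifying that the three images $\varphi(u), \varphi(w_1), \varphi(w_2)$ really are pairwise distinct before concluding that they span a triangle, and this is exactly where the \emph{induced} $P_4$ hypothesis together with triangle-freeness of $F$ is used — an induced $P_4$ guarantees $u$ and $v$ are non-adjacent and at distance exactly $3$, which is what forces the closed walk of length $3$ to be a genuine triangle rather than a degenerate walk. I do not anticipate any real obstacle; the main thing is to phrase the distinctness check cleanly. Note this lemma is precisely what is needed to drive the construction $\Gamma[a,b]$ in Construction~\ref{constr:VC-but-no-hom}, where the $M_{t,t}$ part supplies, between any two of the $t$ branch vertices, an induced $P_4$ routed through $\Gamma$ or through the matching-complement, yielding Theorem~\ref{thm:VC-notfor-hom}.
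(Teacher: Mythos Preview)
Your proof is correct and follows essentially the same route as the paper's: assume two of the $x_i$ collide under $\varphi$, use the $P_4$ between them to produce a triangle in $F$, and conclude. You are in fact more careful than the paper, which simply asserts that $\varphi(x_i),\varphi(y),\varphi(z)$ form a triangle without spelling out the pairwise distinctness check you verify explicitly. One small remark: your closing commentary slightly overstates the role of the \emph{induced} hypothesis --- your argument only uses the three edges $uw_1,w_1w_2,w_2v$ of the path, so a (not necessarily induced) $P_4$ with endpoints $x_i,x_j$ would already suffice; the induced-ness is a convenience of the lemma's phrasing rather than something the proof actually consumes.
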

\begin{proof}
Let $F$ be a triangle-free homomorphic image of $G$ with $\varphi: G\xrightarrow{\textup{hom}} F$. It suffices to show that every vertex $x_i$, $i\in[t]$, must be mapped to a distinct vertex in $F$. Suppose there are two distinct $x_{i}$, $x_{j}$ such that $\varphi(x_{i})=\varphi(x_{j})$. By assumption, there exist vertices $y,z$ such that $x_iyzx_j$ is an induced $P_4$. Then $\varphi(x_i)$, $\varphi(y)$ and $\varphi(z)$ form a triangle in $F$, a contradiction.     
\end{proof}

\begin{prop}\label{prop:nonhomo}
    The graph $\Gamma[a,b]$ in~\cref{constr:VC-but-no-hom} does not have any triangle-free homomorphic image of size smaller than $t$.
\end{prop}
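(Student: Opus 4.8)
The plan is to apply \cref{lem:obstruction-hom} directly: I will exhibit $t$ vertices in $\Gamma[a,b]$ such that between any two of them there is an induced $4$-vertex path $P_4$. The natural candidates are representatives of the $t$ blown-up classes coming from the $x_i$'s (or the $y_i$'s) of $M_{t,t}$. Fix one vertex $\hat{x}_i$ in the class of $x_i$ for each $i\in[t]$; I claim $\{\hat x_i\}_{i\in[t]}$ is the required set. For distinct $i,j$, I would like to connect $\hat x_i$ and $\hat x_j$ through the $\Gamma$-part: pick a representative $\hat z_i$ of the class of $z_i$ and a representative $\hat y_i$ of the class of $y_i$. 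Then $\hat x_i \hat z_i\in E$ (added in the construction) and $\hat z_i$ — wait, I need the path to \emph{end} at $\hat x_j$, so I should route $\hat x_i \,\hat z_i\, \hat y_i\, \hat x_j$? That fails since $\hat y_i \hat x_j\in E$ only if $i\ne j$, which is fine, but I also need $\hat x_i \hat y_i\notin E$ (true, it is the removed matching edge, and blow-up classes are independent so in fact any representative pair is non-adjacent) and $\hat x_i\hat x_j\notin E$ (true: $\{x_i\}$ is independent in $M_{t,t}$ and stays independent after blow-up) and $\hat z_i \hat x_j\notin E$ (true: $z_i$ is joined only to $x_i,y_i$ among the $M_{t,t}$-side) and $\hat z_i\hat y_i\in E$ (added in the construction). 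So $\hat x_i\,\hat z_i\,\hat y_i\,\hat x_j$ has all three consecutive edges present and the three non-consecutive pairs absent, hence is an induced $P_4$.

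The steps I would carry out, in order, are: (1) recall from \cref{constr:VC-but-no-hom} the exact edge set of $\Gamma[a,b]$ — within $M_{t,t}$ one has $x_iy_j$ iff $i\ne j$, the attachments $x_iz_i, y_iz_i$, the edges of $\Gamma$ among the $z$'s, and every vertex of a blow-up class inherits exactly the adjacencies of the original vertex (with classes being independent sets); (2) fix representatives $\hat x_i$ of each $x$-class, $i\in[t]$; (3) for each pair $i\ne j$, fix representatives $\hat z_i$ of the $z_i$-class and $\hat y_i$ of the $y_i$-class, and verify the six adjacency/non-adjacency conditions making $\hat x_i\hat z_i\hat y_i\hat x_j$ an induced $P_4$ — the three edges $\hat x_i\hat z_i$, $\hat z_i\hat y_i$, $\hat y_i\hat x_j$ are present; the three non-edges $\hat x_i\hat y_i$, $\hat x_i\hat x_j$, $\hat z_i\hat x_j$ are absent; (4) invoke \cref{lem:obstruction-hom} with this set of $t$ vertices to conclude that any triangle-free homomorphic image of $\Gamma[a,b]$ has at least $t$ vertices.

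I do not expect any real obstacle here; the only thing to be slightly careful about is that the path uses \emph{different} intermediate vertices $\hat z_i, \hat y_i$ for different pairs $j$, which is allowed since \cref{lem:obstruction-hom} only asks for the existence of \emph{some} induced $P_4$ between each pair, and that the endpoints must be pairwise non-adjacent and distinct, which holds because the $x$-classes are independent and pairwise disjoint. One should also note $t\ge 3$ ensures there genuinely are such pairs and that $\Gamma$ being maximal triangle-free guarantees the $z_i$'s (and hence the attachments) exist as described, though for this particular proposition only the structure of $M_{t,t}\vee\Gamma$ and its blow-up is used, not maximality of $\Gamma$.
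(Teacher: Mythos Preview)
Your proposal is correct and matches the paper's proof essentially line for line: the paper also takes the $x_i$'s as the $t$ distinguished vertices and, for each pair $i\neq j$, exhibits the induced path $x_i z_i y_i x_j$ before invoking \cref{lem:obstruction-hom}. Your verification of the six adjacency/non-adjacency conditions is exactly what is needed, and your remark that maximality of $\Gamma$ is not used here is also accurate.
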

\begin{proof}
By construction, between any two distinct $x_i,x_j$ in $\Gamma[a,b]$, $x_iz_iy_ix_j$ is an induced $P_4$. The conclusion then follows from~\cref{lem:obstruction-hom}.
\end{proof}

\subsection{Proof of~\cref{thm:VC-notfor-hom}}
We shall prove that $G=K_{\frac{n}{8},\frac{n}{8}}[1,2]$ has the desired properties. Let $V(G)=A\cup B\cup C\cup D$, where $A=\{a_{i}\}_{i\in[n/4]}$, $B=\{b_{i}\}_{i\in[n/4]}$, $C=\{c_{i}^{(1)},c_{i}^{(2)}\}_{i\in[n/8]}$ and $D:=\{d_{i}^{(1)},d_{i}^{(2)}\}_{i\in[n/8]}$ are independent sets, and $A\cup B$ induces $M_{n/4,n/4}$ with $a_ib_i\notin E(G)$, $C\cup D$ induces $K_{n/4,n/4}$ and $a_{i}c_{i}^{(k)},b_{i}c_{i}^{(k)}, a_{i+n/8}d_{i}^{(k)},b_{i+n/8}d_{i}^{(k)}\in E(G)$ for all $i\in [n/8]$ and $k\in\{1,2\}$.

By construction, the minimum degree of $G$ is at least $\frac{n}{4}$. \cref{thm:VC-notfor-hom} follows from~\cref{prop:nonhomo} (with $t=\frac{n}{4}$) and the following result.

\begin{prop}\label{prop:BoundedVCDim}
    The VC-dimension of $G=K_{\frac{n}{8},\frac{n}{8}}[1,2]$ is at most $3$.
\end{prop}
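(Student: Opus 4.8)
The plan is to show that no set of $4$ vertices of $G = K_{n/8,n/8}[1,2]$ can be shattered by the neighborhood set system $\mathcal{F} = \{N_G(v) : v \in V(G)\}$, which gives $\VC(G) \le 3$. First I would record the adjacency structure explicitly in terms of the four parts $A, B, C, D$: every neighborhood of a vertex in $A \cup B$ meets $C \cup D$ in exactly one ``blown-up pair'' $\{c_i^{(1)},c_i^{(2)}\}$ or $\{d_i^{(1)},d_i^{(2)}\}$ together with a large co-matching-type subset of the opposite side of $M_{n/4,n/4}$; every neighborhood of a vertex in $C$ is all of $D$ plus a pair $\{a_i, b_i\}$ (one from $A$, one from $B$, same index); and symmetrically for $D$. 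The key structural observations are: (i) $c_i^{(1)}$ and $c_i^{(2)}$ are \emph{twins} (same neighborhood), and likewise $d_i^{(1)}, d_i^{(2)}$ — so any shattered set can contain at most one vertex from each twin class, and more importantly twins cannot help shatter; (ii) within $C$ (resp.\ $D$), all neighborhoods agree on the set $D$ (resp.\ $C$), so $C$ and $D$ behave like independent sets with very rigid traces.

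Next I would argue by cases on how a putative shattered set $S$ with $|S| = 4$ distributes among $A, B, C, D$. The workhorse lemma is that the traces on $C \cup D$ of neighborhoods of $A$-vertices and $B$-vertices are too structured: a neighborhood $N(a_i)$ (with $i \le n/8$) contains the whole pair $\{c_i^{(1)}, c_i^{(2)}\}$ and no other vertex of $C$, and contains no vertex of $D$; so if $S$ contains two vertices from $C$ coming from different blown-up pairs, no single $A$- or $B$-neighborhood can contain exactly one of them, and $C$-, $D$-neighborhoods contain all or none of $C$ — so such an $S$ cannot be shattered. This forces $|S \cap C| \le 1$ effectively (two vertices from the same pair are twins, hence unshatterable), and symmetrically $|S \cap D| \le 1$. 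One then checks the remaining distributions — roughly $S \subseteq A \cup B$ with at most one vertex in $C$ and at most one in $D$ — using that $A \cup B$ induces the co-matching graph $M_{n/4,n/4}$, whose neighborhood system has small VC-dimension, and that adding the rigid $C$- and $D$-parts cannot boost it past $3$. A clean way to finish each subcase is to exhibit, for each candidate $4$-set, a specific subset $B \subseteq S$ that is not realized as $N(v) \cap S$ for any $v$, typically $B = \{x, y\}$ where $x \in A$, $y \in C \cup D$ but the ``$y$-index'' does not match the ``$x$-index,'' or a pair inside $M_{n/4,n/4}$ that forces a triangle-like obstruction.

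The main obstacle I expect is the bookkeeping in the mixed case $|S \cap A|, |S \cap B| \ge 1$ with one extra vertex in $C$ or $D$: here one must simultaneously track the $M_{n/4,n/4}$-traces (which are ``everything except the matched vertex'') and the $C/D$-index-matching constraints, and rule out all $2^4$ subsets being hit. I would handle this by first reducing via twins and the ``$C/D$ all-or-one'' rigidity to at most one $C$-vertex and one $D$-vertex in $S$, then observing that a vertex of $C$ with index $i$ is adjacent in $S$ only to $S$-vertices among $\{a_i, b_i\}$, so its presence in $S$ essentially pins down which $A$/$B$ indices appear; a short parity/counting argument then shows some subset of $S$ is missed. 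Constructing an explicit shattered set of size $3$ (e.g.\ $\{a_1, a_2, c_1^{(1)}\}$ or a suitable triple, using that $n$ is large) confirms the bound $\VC(G) = 3$ is tight, though only the upper bound is needed for the statement.
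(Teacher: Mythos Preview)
Your overall strategy---case analysis on how a putative shattered $4$-set $S$ is distributed among $A,B,C,D$---is the same as the paper's. But two concrete issues would prevent your plan from going through as written.

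First, your ``workhorse lemma'' is false. You claim that if $S$ contains two $C$-vertices from different blown-up pairs, say $c_1^{(1)}$ and $c_2^{(1)}$, then no $A$- or $B$-neighborhood contains exactly one of them. But $N(a_1)\cap C=\{c_1^{(1)},c_1^{(2)}\}$, so $N(a_1)\cap\{c_1^{(1)},c_2^{(1)}\}=\{c_1^{(1)}\}$: the $A$-neighborhood \emph{does} separate them. In fact $\{c_1^{(1)},c_2^{(1)}\}$ is shattered (by $a_3,a_1,a_2$, and any $d$-vertex), so your intended conclusion $|S\cap C|\le 1$ is too strong. The paper only obtains $|S\cap(C\cup D)|\le 2$, and when equality holds it argues that $v_S$ (the vertex realizing the full trace $S$) must lie in $D$, whence $|S\cap A|\le 1$.

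Second, and more importantly, you are missing the observation that drives the paper's proof: since $G$ is triangle-free, every neighborhood is an independent set, and in particular $S\subseteq N(v_S)$ is independent. This single line immediately forces $S\cap(A\cup B)$ to be either contained in $A$, contained in $B$, or equal to a matched pair $\{a_i,b_i\}$, and forces $S\cap(C\cup D)$ to lie entirely in $C$ or entirely in $D$. Without this reduction your ``mixed case'' bookkeeping (which you flag as the main obstacle) would have to handle $S$ containing, e.g., both $a_i$ and $b_j$ with $i\neq j$, or both a $C$-vertex and a $D$-vertex---many more configurations than the paper actually faces. Relatedly, your proposed tight example $\{a_1,a_2,c_1^{(1)}\}$ cannot be shattered, since $a_1c_1^{(1)}\in E(G)$ and so it is not even independent.
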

\begin{proof}
 Let $S\subseteq V(G)$ be a nonempty subset shattered by $\mathcal{F}:=\{N_{G}(v)\subseteq V(G):v\in V(G)\}$. Then, for every subset $S'\subseteq S$, there is a vertex $v_{S'}$ such that $N_{G}(v_{S'})\cap S=S'$. As $G$ is triangle-free, for every vertex $v\in V(G)$, $N_G(v)$ is an independent set; in particular, $S\subseteq N_{G}(v_{S})$ is an independent set. Then, $S\cap (A\cup B)$ is either $\{a_i,b_i\}$ for some $i\in [\frac{n}{4}]$, or a subset of $A$ or $B$. Similarly, $S\cap (C\cup D)$ is a subset of $C$ or $D$.
 
Suppose that $S\cap (A\cup B)=\{a_i,b_i\}$ for some $i\in [\frac{n}{4}]$. Since $S\cap C$ and $S\cap D$ cannot both be nonempty, we can assume without loss of generality that $S\cap C\neq\emptyset$. If $|S\cap C|=1$, then we are done. Otherwise, if $S\cap C$ contains at least two vertices, say $c,c'\in C$, then, by our construction of $G$, $v_{\{a_i,b_i,c\}}$ can only be in $D$. But then we would also have $c'\in N_G(v_{\{a_i,b_i,c\}})\cap S$, a contradiction. So, $S$ has size at most $3$ in this case. 

Next, by symmetry, suppose that $S\cap (A\cup B)\subseteq A$ and $S\cap (C\cup D)\subseteq C$. Suppose that $S\cap (C\cup D)$ contains at least three vertices, say $c_1,c_2,c_3\in C$, then $N_G(c_i)\neq N_G(c_j)$ for any distinct $i,j\in[3]$, as otherwise there is no choice for $v_{\{c_i\}}$. Consequently, there is no $k\in[\frac{n}{8}]$ so that $\{c_1,c_2\}=\{c_k^{(1)},c_k^{(2)}\}$, which implies that $v_{\{c_1,c_2\}}$ can only be in $D$. However, we would also have $c_3\in N_G(v_{\{c_1,c_2\}})\cap S$, again a contradiction. 
Therefore, $|S\cap (C\cup D)|\le 2$. If $|S\cap (C\cup D)|= 2$, similarly $v_S$ must be in $D$. As by assumption $S\cap (A\cup B)\subseteq A$ and every vertex in $D$ has degree 1 in $A$, we have $|S|\le 3$.

We may then assume $|S\cap (C\cup D)|\le 1$. Suppose that $|S\cap (C\cup D)|= 1$. If $S\cap (A\cup B)\subseteq A$ contains three vertices, say $a_1,a_2,a_3\in A$, then as every vertex in $D$ has degree $1$ to $A$, for each $i\in[3]$, $v_{S\setminus\{a_i\}}$ must lie in $B$. But $S\setminus\{a_i\}$ contains a vertex in $C$ that has degree $1$ in $B$, implying that there is at most one vertex in $B$ that could be $v_{S\setminus\{a_i\}}$ for some $i\in [3]$, a contradiction. Thus, in this case $|S\cap (A\cup B)|\le 2$ and $|S|\le 3$.

Lastly, we assume $S\cap (C\cup D)=\varnothing$. If $|S\cap (A\cup B)|\ge 4$, then there is no vertex adjacent to exactly two vertices in $S\cap(A\cup B)\subseteq A$, a contradiction. Hence $|S|=|S\cap (A\cup B)|\le 3$, finishing the proof.
\end{proof}

We remark that $G=K_{\frac{n}{8},\frac{n}{8}}[1,2]$ is triangle-free and has large minimum degree, but $G$ is not $\varepsilon$-ultra maximal $K_{3}$-free as $|N_G(a_{i},b_{i})|=2$. This also shows that the ultra $K_{r}$-freeness is more important to determine the structure of homomorphic images of dense $K_{r}$-free graphs. 

\section{Sharp blow-up phenomenon}\label{sec:ProofOfMainTheorem} 

\subsection{Proof of Theorem~\ref{thm:main1}}\label{subsec:FormalProof}
The main tool used in the proof of \cref{thm:main1} is the packing lemma of Haussler~\cite{1995PackingLemma}. To state it, we need some notations. In a set system $\C F\subseteq 2^{V}$, we say that $v$ \Yemph{splits} $F_{1},F_{2}\in\C F$ if $v$ belongs to exactly one of $F_{1}$ and $F_{2}$, i.e.~$v\in F_1\triangle F_2$. A subfamily $\mathcal{X}\subseteq\mathcal{F}$ is \Yemph{$s$-separated} if for every pair in $\C X$ is split by at least $s$
elements in $V$. Haussler~\cite{1995PackingLemma} proved that the size of an $s$-separated subfamily in a set system can be bounded using its VC-dimension.

\begin{lemma}[\cite{1995PackingLemma}]\label{lemma:Packing}
     Let $\mathcal{F}\subseteq 2^{V}$ be a set system with VC-dimension $d$. If $\mathcal{X}\subseteq \mathcal{F}$ is $s$-separated, then $|\mathcal{X}|\le e(d+1)\big(\frac{2e|\mathcal{F}|}{s}\big)^{d}$.
\end{lemma}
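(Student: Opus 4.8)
The plan is to follow Haussler's original two–step strategy: first establish a \textbf{density lemma} for one-inclusion graphs of families of bounded VC-dimension, and then convert it, via a probabilistic projection of the ground set, into the packing bound.

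\textbf{Step 1: one-inclusion graphs of low-VC families are sparse.} For a finite set system $\mathcal{G}\subseteq 2^{W}$, let its \emph{one-inclusion graph} $U(\mathcal{G})$ have vertex set $\mathcal{G}$, with an edge joining $G,G'$ whenever $|G\triangle G'|=1$. I would prove that $\VC(\mathcal{G})\le d$ forces $|E(U(\mathcal{G}))|\le d\,|\mathcal{G}|$. The tool is coordinate \emph{down-compression}: fix $w\in W$ and replace each $G\ni w$ by $G\setminus\{w\}$ whenever $G\setminus\{w\}\notin\mathcal{G}$, leaving every other set untouched. A routine check shows this keeps $|\mathcal{G}|$ fixed, cannot increase $\VC(\mathcal{G})$, and cannot decrease $|E(U(\mathcal{G}))|$ (edges in direction $w$ survive bijectively, while an edge in a direction $w'\neq w$ either survives or is sent to another direction-$w'$ edge). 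Iterating down-compression over all coordinates terminates at a downward-closed family $\mathcal{G}^{*}$ of the same size with $\VC(\mathcal{G}^{*})\le d$; since a down-set shatters each of its members, every member of $\mathcal{G}^{*}$ has size at most $d$. Charging each edge of $U(\mathcal{G}^{*})$ to its larger endpoint, a member $B$ is charged exactly $|B|\le d$ times, so $|E(U(\mathcal{G}))|\le|E(U(\mathcal{G}^{*}))|\le\sum_{B\in\mathcal{G}^{*}}|B|\le d\,|\mathcal{G}^{*}|=d\,|\mathcal{G}|$.

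\textbf{Step 2: from density to packing.} Let $\mathcal{X}\subseteq\mathcal{F}\subseteq 2^{V}$ be $s$-separated, with $\VC(\mathcal{X})\le\VC(\mathcal{F})\le d$. The idea is to expose the ground set gradually, i.e.~project $\mathcal{X}$ onto a uniformly random sub-ground-set $W\subseteq V$ of a carefully chosen size $m$ of order $|\mathcal{F}|/s$, and to track how fast members of $\mathcal{X}$ are forced to coincide. On one hand, the projected family $\{X\cap W:X\in\mathcal{X}\}$ still has VC-dimension at most $d$, so by Sauer--Shelah it has at most $\binom{m}{\le d}$ members. On the other hand, since $\mathcal{X}$ is $s$-separated, every still-merged pair of members is resolved by each newly exposed coordinate with probability at least $s/|\mathcal{F}|$, and the instant such a pair is separated by a single coordinate it contributes an edge to the one-inclusion graph of the current projection, whose edge count is at most $d$ times its vertex count by Step~1. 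Summing the expected number of such edges against the Sauer--Shelah cap and solving the resulting inequality for $|\mathcal{X}|$, then optimizing over $m$, yields $|\mathcal{X}|\le e(d+1)\bigl(\tfrac{2e|\mathcal{F}|}{s}\bigr)^{d}$.

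\textbf{Where the difficulty lies.} Step~1 is clean once the compression bookkeeping is in place. The real work is Step~2: one must make precise the correspondence ``a merged pair separated by one new coordinate $\Rightarrow$ an edge of a one-inclusion graph'', while controlling the multiplicity with which several merged members can map to the same edge, and one must choose the projection size $m$ so that the density bound of Step~1 beats the Sauer--Shelah count \emph{without} an extraneous logarithmic factor. (A naive union bound ensuring the projection is injective on $\mathcal{X}$ would cost a spurious $\log|\mathcal{X}|$ in $m$; it is precisely the one-inclusion-graph density lemma that removes it and produces the clean polynomial bound stated above.)
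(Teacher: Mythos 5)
A point of orientation first: the paper offers no proof of this lemma at all — it is quoted verbatim from Haussler's paper — so your proposal can only be compared with Haussler's argument. Your two-step plan (a density bound of $d|\mathcal{G}|$ on the edges of the one-inclusion graph via down-compression, then a random projection of the ground set that trades the density bound against the Sauer–Shelah count) is exactly the standard route to Haussler's packing lemma, and your Step 1, including the charging of edges of a down-set to their larger endpoints, is correct and essentially complete.

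The genuine gaps are in Step 2. First, the per-coordinate separation probability is $s/|V|$, not $s/|\mathcal{F}|$: two members split by at least $s$ elements of the ground set $V$ are separated by a uniformly random element of $V$ with probability at least $s/|V|$ (and, conditioned on the pair still being merged, all $\ge s$ splitting elements lie outside the exposed coordinates, so the bound survives sampling without replacement); the cardinality of the family plays no role in this estimate. This is not cosmetic, because the bound with $|\mathcal{F}|$ in place of $|V|$ — which is how the lemma is literally stated — is false in general: take $\mathcal{F}=\mathcal{X}$ to be $M$ pairwise disjoint $s$-element sets, which has VC-dimension $1$ and is $s$-separated, yet $|\mathcal{X}|=M=|\mathcal{F}|$ violates $M\le 2e\cdot(2eM/s)$ as soon as $s>4e^{2}$. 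What is provable (and what Haussler proves, and what the paper actually needs, since there $\mathcal{F}=\{N(v):v\in V(G)\}$ and hence $|\mathcal{F}|=|V|=n$) is the bound with the ground-set size $|V|$; your argument, once corrected, targets that version, so you should say so explicitly. Second, the quantitative heart of the argument is only described, never executed: one must compare the expected number of merged classes split by the last exposed coordinate (lower-bounded via $s/|V|$, with the multiplicity issue you raise handled by counting split projection-classes — each split class contributes exactly one one-inclusion edge in the new direction — rather than split pairs) against $\tfrac{d}{m}$ times the Sauer–Shelah bound $\sum_{i\le d}\binom{m}{i}$ for the projection, and then choose $m$ of order $d|V|/s$ to solve for $|\mathcal{X}|$ and recover the constant $e(d+1)\bigl(\tfrac{2e|V|}{s}\bigr)^{d}$. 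You correctly identify this as ``the real work,'' but it is precisely the part that is missing, so as it stands the proposal is a correct plan rather than a proof.
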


\begin{proof}[Proof of Theorem~\ref{thm:main1}]
Let $M_{t,t}$ be the graph obtained by removing a perfect matching from  $K_{t,t}$. Let $\mathcal{M}_{t,t}$ be the family of graphs obtained by adding every possible set of edges to the two vertex parts of $M_{t,t}$; that is, $\mathcal{M}_{t,t}$ consists of all graphs $F$ with vertices $\{a_{i},b_{i}\}_{i\in[t]}$ such that $a_{i}b_{j}\in E(F)$ if and only if $i\neq j$ for all $i,j\in[t]$, where we put no restriction on $F[\{a_1,\ldots,a_t\}]$ and $F[\{b_1,\ldots,b_t\}]$. It is clear that $\mathcal{M}_{t,t}$ is a subfamily of $\mathcal{H}_{t}$ defined in~\cref{subsec:bi-ind-matching}. Setting $t:=\frac{1}{\eps}+1$, we infer from~\cref{lemma:NoHalfGraph} that $G$ is $\mathcal{M}_{t,t}$-free. Combining $K_{r}$-freeness and $\mathcal{M}_{t,t}$-freeness, we can bound the VC-dimension of $G$.

\begin{claim}\label{claim:BOundedVC}
The VC-dimension of $G$ is at most $t+r-4$.
\end{claim}
\begin{poc}
    Suppose for contradiction that the VC-dimension of $G$ is at least $t+r-3$. Then, there is a $(t+r-3)$-set $A=\{a_1,\ldots,a_{t+r-3}\}$ such that for each subset $S\subseteq A$, there exists a vertex $v_S\in V(G)$ with $N(v_S)\cap A=S$. In particular, letting $b_i=v_{A\setminus\{a_i\}}$ for each $i\in[t+r-3]$, we have $N(b_{i})\cap A=A\setminus\{a_i\}$. Let $B=\{b_{1},\ldots,b_{t+r-3}\}$.
    
    As $G$ is $K_{r}$-free, it is not hard to see that $|A\cap B|\le r-3$. Indeed, suppose otherwise that $|A\cap B|\ge r-2$. As for each $i\in [t+r-3]$, $b_i$ is adjacent to all vertices in $A$ except for $a_i$, we have that $a_i\neq b_j$ for every distinct $i,j$. Therefore, for any vertex $c\in A\cap B$, we must have $c=a_i=b_i$ for some $i\in [t+r-3]$. Assume without loss of generality that $a_i=b_i$ for $i\in [r-2]$. Then $v_A,a_1,\ldots,a_{r-2}$ together with any vertex in $A\setminus\{a_1,\ldots,a_{r-2}\}$ form a copy of $K_r$, a contradiction.

    As $|A|=|B|=t+r-3$ and $|A\cap B|\le r-3$, by the construction of $A$ and $B$, we can find $2t$ distinct vertices $\{a'_i\}_{i\in[t]}\subseteq A\setminus B$ and $\{b'_i\}_{i\in[t]}\subseteq B\setminus A$ such that $\{a'_i,b'_i\}_{i\in[t]}$ induces a copy of some graph in $\mathcal{M}_{t,t}$, a contradiction. 
\end{poc}

Let $\mathcal{F}:=\{N(v):v\in V(G)\}$ and $s=\frac{\varepsilon |\mathcal{F}|}{10}$. Let $\mathcal{X}:=\{N(v_{1}),N(v_{2}),\ldots,N(v_{m})\}$ be a maximal $(s+1)$-separated subfamily of $\C F$. By~\cref{lemma:Packing} and~\cref{claim:BOundedVC}, $m\le e(d+1)(\frac{2e|\mathcal{F}|}{s+1})^{d}=(\frac{1}{\eps})^{O(\frac{1}{\eps}+r)}$. We then partition $V(G)=V_{1}\cup V_{2}\cup\cdots\cup V_{m}$ as follows. For every $v\in V(G)$, let $v\in V_i$ where $i\in[m]$ is the smallest index such that $|N(v)\triangle N(v_{i})|\le s$; such an index $i$ exists by the maximality of $\mathcal{X}$. Note that by the construction of our partition and the triangle inequality, every pair of vertices $u,v$ in the same part $V_i$ satisfies $|N(u)\triangle N(v)|\le|N(u)\triangle N(v_i)|+|N(v)\triangle N(v_i)|\le 2s$.

\begin{claim}\label{claim:cherry}
    For any $i\in [m]$, $v,w\in V_{i}$, there is no $u\in V(G)$ with $uv\in E(G)$ but $uw\notin E(G)$.
\end{claim}
\begin{poc}
   By assumption, $G[N(u,w)]$ contains $\varepsilon n^{r-2}$ copies of $K_{r-2}$. Since $v,w$ lies in the same part $V_i$, $|N(w)\setminus N(v)|\le 2s$. Thus, the number of $(r-2)$-cliques in $G[N(u,w)]$ with at least one vertex lying in $N(w)\setminus N(v)$ is at most $2s n^{r-3}<\frac{\varepsilon n^{r-2}}{2}$, and so there is a copy of $K_{r-2}$ in $G[N(u,v)]$, contradicting $G$ being $K_r$-free.
\end{poc}

For two disjoint subsets $A,B\subseteq V(G)$, we say that $A$ and $B$ are \Yemph{complete} to each other if every vertex of $A$ is adjacent to every vertex of $B$, and \Yemph{anti-complete} to each other if no vertex of $A$ is adjacent to any vertex of $B$.

\begin{claim}\label{claim:CompleteOrEmpty}
    For each pair of distinct $i,j\in [m]$, $V_{i}$ and $V_{j}$ are either complete, or anti-complete.
\end{claim}
\begin{poc}
If $|V_{i}|=|V_{j}|=1$, then we are done. Suppose that $V_{i}$ and $V_{j}$ are neither complete, nor anti-complete. Then, without loss of generality, there exist $v,w\in V_i$ and $u\in V_j$ such that $uv\in E(G)$ and $uw\notin E(G)$, contradicting~\cref{claim:cherry}. 
\end{poc}

\begin{claim}\label{claim:2}
    For each $i\in [m]$, if $|V_{i}|\ge r$, then $V_{i}$ is an independent set.
\end{claim}
\begin{poc}
   Since $|V_{i}|\ge r$, $G[V_{i}]$ cannot be a clique. Suppose that $V_{i}$ is not an independent set, then there exists a triple $v,w,u\in V_{i}$ such that $uv\in E(G)$ and $uw\notin E(G)$, again contradicting~\cref{claim:cherry}. 
\end{poc}
Finally, if any $V_{i}$ has size smaller than $r$, further partition $V_i$ into $|V_{i}|$ singletons. \cref{claim:CompleteOrEmpty} and~\cref{claim:2} imply that $G=F[\cdot]$, where clearly $F$ is maximal $K_{r}$-free and $|F|\le (r-1)m=(\frac{1}{\eps})^{O(\frac{1}{\eps}+r)}$.
\end{proof}

\subsection{Proof of~\cref{thm:main1-lowerbound}}\label{sec:lowerbound-blowup}

We first prove that the theorem holds for the case $r=3$.

\begin{lemma}\label{lemma:TriangleFreeSharp}
    There exists an $n$-vertex triangle-free graph $G$ such that for any pair of non-adjacent vertices $u,v$, $|N_{G}(u,v)|\ge\eps n$, but $G$ has no triangle-free homomorphic image of size smaller than $2^{\frac{1}{8\varepsilon}}$.
\end{lemma}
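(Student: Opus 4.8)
The plan is to give an explicit construction. I want an $n$-vertex triangle-free graph $G$ where every non-adjacent pair has at least $\varepsilon n$ common neighbors, yet any triangle-free homomorphic image is large. For the lower bound on homomorphic images I will use \cref{lem:obstruction-hom}: it suffices to exhibit $t = 2^{1/(8\varepsilon)}$ vertices $x_1,\dots,x_t$ so that between any two of them there is an induced $P_4$. The natural candidate is a (blow-up of a) \emph{shift graph} or a \emph{Borsuk–Kneser-type} graph on the Boolean cube: take vertex set indexed by the $k$-subsets of $[2^k]$, or more simply by binary strings, with adjacency defined by a ``shift'' or ``disjointness'' rule that is triangle-free but has the property that distinct vertices are joined by induced $4$-paths. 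Setting $k \approx 1/(8\varepsilon)$ gives $t = 2^k$ base vertices, and blowing up each vertex into an independent set of the right size produces the $\varepsilon n$ common-neighborhood density.

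Concretely, I would proceed as follows. First, fix $k$ with $2^k = 2^{1/(8\varepsilon)}$ and build a bipartite-like triangle-free ``gadget'' graph $\Gamma_0$ on $O(2^k)$ vertices containing designated vertices $x_1,\dots,x_{2^k}$ such that (i) $\Gamma_0$ is triangle-free, (ii) every pair $x_i, x_j$ is connected by an induced $P_4$ inside $\Gamma_0$, and (iii) every non-adjacent pair of vertices already has a reasonably large set of common neighbors, or at least this can be arranged after the next step. Second, take an appropriate balanced blow-up $G = \Gamma_0[\cdot]$ on $n$ vertices, replacing each vertex by an independent set. Blowing up preserves triangle-freeness and the induced-$P_4$ structure (an induced $P_4$ between representatives of $x_i$ and $x_j$ survives), so by \cref{lem:obstruction-hom} applied with the $2^k$ blown-up classes of $x_1,\dots,x_{2^k}$, $G$ has no triangle-free homomorphic image of size smaller than $2^k = 2^{1/(8\varepsilon)}$. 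Third, check the density condition: for each non-adjacent pair $u,v$ in $G$, the common neighborhood corresponds to a union of blown-up classes, whose total size is at least an $\varepsilon$ fraction of $n$ provided the blow-up is balanced and $\Gamma_0$ has the property that every non-adjacent pair of base vertices has at least one (hence, after blow-up, a linear-sized) common neighbor; adjacent-within-a-class pairs don't exist since classes are independent, and the worst case is two vertices in distinct classes.

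The main obstacle I anticipate is \textbf{simultaneously} achieving the large minimum co-degree $|N(u,v)| \ge \varepsilon n$ for \emph{all} non-adjacent pairs and the existence of $2^k$ vertices pairwise joined by \emph{induced} $P_4$'s, while keeping the base graph on only $O(2^k)$ vertices so that the blow-up classes have size $\Omega(\varepsilon n)$ — there is tension between wanting the graph dense (large co-degrees) and wanting many induced $P_4$'s (which requires controlled non-adjacency). The right trick is likely the one already used in \cref{constr:VC-but-no-hom}: glue a graph $M_{t,t}$ (complete bipartite minus a perfect matching) — which forces the induced $P_4$'s via the paths $x_i z_i y_i x_j$ — onto a dense triangle-free ``reservoir'' (a large complete bipartite piece) attached so as to boost co-degrees, then blow up. Verifying that this gluing does not create triangles and that the reservoir contributes $\Omega(\varepsilon n)$ common neighbors to every non-adjacent pair is the technical heart; the precise constant $\tfrac18$ in the exponent will come from bookkeeping the sizes of the $O(1/\varepsilon)$ gadget vertices versus the $n$ total vertices. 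Once $r=3$ is done, the general-$r$ statement in \cref{thm:main1-lowerbound} will follow by a standard ``add a complete $(r-2)$-partite blow-up joined to everything'' lifting of this construction, which I would defer to the body of the proof.
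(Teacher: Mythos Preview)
Your proposal has a genuine gap: the concrete construction you land on --- gluing $M_{t,t}$ to a ``reservoir'' as in \cref{constr:VC-but-no-hom} --- does not satisfy the $\varepsilon$-ultra maximal condition. The paper itself points this out right after \cref{prop:BoundedVCDim}: in that construction the non-adjacent pair $(x_i,y_i)$ (equivalently $(a_i,b_i)$) has only \emph{two} common neighbors, not $\varepsilon n$. No complete-bipartite reservoir attached on the side can fix this without creating triangles, because any vertex adjacent to both $x_i$ and $y_i$ is already adjacent to the endpoints of the edge $x_iy_j$ for $j\neq i$. Your earlier Boolean-cube instinct was actually the right one, and you should not have abandoned it.

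The paper's construction is as follows, and two of its features are ones your outline misses. Take $d\approx\tfrac{1}{8\varepsilon}$, let $Q=\{0,1\}^d$ with the antipodal matching $\boldsymbol{u}\bar{\boldsymbol{u}}$, and add $2d$ ``coordinate'' vertices $a_i^{(0)},a_i^{(1)}$ with $a_i^{(0)}a_i^{(1)}\in E$ and $\boldsymbol{u}a_i^{(u_i)}\in E$. First, the $2^{d-1}$ vertices with pairwise induced $P_4$'s live in $Q$ (one endpoint of each antipodal edge), and the $P_4$ between $\boldsymbol{u}$ and $\boldsymbol{v}$ is $\boldsymbol{u}\,a_i^{(1)}a_i^{(0)}\,\boldsymbol{v}$ through a coordinate where they differ --- so the ``gadget'' and the ``reservoir'' are intertwined, not glued side by side. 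Second, and this is the point your ``balanced blow-up'' would break, the blow-up is \emph{asymmetric}: only the $2d$ coordinate vertices are blown up (each to size $2^d$), while $Q$ is left untouched. This is forced because the worst-case non-adjacent pair is two blown-up coordinate vertices from different $A_i^{(x)},A_j^{(y)}$, whose only common neighbors are the $2^{d-2}$ cube points $\boldsymbol{w}$ with $w_i=x,w_j=y$; a balanced blow-up would make the other cases (where the common neighborhood is a single coordinate class) have co-degree only $n/2^d$, which is far too small.
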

\begin{proof}
   Let $d=\lfloor\frac{1}{8\varepsilon}-\frac{1}{2}\rfloor$ and note that $\varepsilon\le \frac{1}{8d+4}$. 
   Let $H$ be a graph on vertex set $D\cup Q$, where $D=\{a_{i}^{(0)},a_{i}^{(1)}\}_{i\in[d]}$ and $Q=\{0,1\}^d$. Clearly, $H$ has $2d+2^{d}$ vertices. For each $\boldsymbol{u}\in Q$, write $\bar{\boldsymbol{u}}=\boldsymbol{1}-\boldsymbol{u}$, where $\boldsymbol{1}$ is the all one vector. The edge set $E(H)$ is defined as follows:
   \begin{itemize}
       \item $a_{i}^{(0)}a_{i}^{(1)}\in E(H)$ for each $i\in [d]$;
       \item $\boldsymbol{u}\bar{\boldsymbol{u}}\in E(H)$ for each $\boldsymbol{u}\in Q$;
       \item each $\boldsymbol{u}=(u_{1},u_{2},\ldots,u_{d})\in Q$ is adjacent to $d$ vertices $\{a_{1}^{(u_{1})},\ldots,a_{d}^{(u_{d})}\}\subseteq D$.
   \end{itemize}
   Let us observe some properties of $H$. It is easy to see that the graph $H$ is maximal triangle-free. 
\begin{claim}\label{claim:Twin-free}
     In $H$, the set $Q$ contains $2^{d-1}$ vertices between any pair of which there is an induced $P_4$. In particular, $H$ has no triangle-free homomorphic image of size smaller than $2^{d-1}$.
\end{claim}
\begin{poc}
    By construction, the subgraph of $H$ induced on $Q$ is a perfect matching $M$ of size $2^{d-1}$. Let $X\subseteq Q$ be an independent set of $2^{d-1}$ vertices containing exactly one endpoint of an edge in $M$. Then, for any two distinct vertices $\boldsymbol{u}=(u_1,\ldots,u_d),\boldsymbol{v}=(v_1,\ldots,v_d)\in X$, they must differ at some coordinate, say $u_i=1$ and $v_i=0$ for some $i\in[d]$. Then $\boldsymbol{u}a_i^{(1)}a_i^{(0)}\boldsymbol{v}$ is an induced $P_4$. The rest then follows from~\cref{lem:obstruction-hom}.
\end{poc}
    
Next, let $G$ be a graph obtained from $H$ by blowing up each vertex $a_i^{(j)}$ in $D$ to a set $A_{i}^{(j)}$ of $2^{d}$ identical copies. Let $V(G)=P\cup Q$, where $P=\bigcup_{i\in[d]}(A_{i}^{(0)}\cup A_{i}^{(1)})$. The graph $G$ satisfies the required co-degree condition.
\begin{claim}
    For each non-adjacent pair of vertices $u,v$ in $G$, $|N_{G}(u,v)|\ge 2^{d-2}$.
\end{claim}
\begin{poc}
    Consider the following cases.
    \begin{Case}
        \item If both of $u,v$ belong to $Q$, then write them as $\boldsymbol{u}$ and $\boldsymbol{v}$. Since $\boldsymbol{u}\boldsymbol{v}\notin E(G)$, we have $\boldsymbol{u}\neq \bar{\boldsymbol{v}}$. Therefore, there exists some coordinate $j\in [d]$ such that $u_{j}=v_{j}$, which implies that $A_{j}^{(u_j)}\subseteq N_G(\boldsymbol{u},\boldsymbol{v})$, and therefore $|N_{G}(\boldsymbol{u},\boldsymbol{v})|\ge 2^{d}$.

    \item If $\boldsymbol{u}\in Q$ and $v\in P$. Assume without loss of generality that $v\in A_{i}^{(0)}$, then $\boldsymbol{u}v\notin E(G)$ implies that $u_{i}\neq 0$. Then $A_{i}^{(1)}\subseteq N_{G}(\boldsymbol{u},v)$, and therefore $|N_{G}(\boldsymbol{u},v)|\ge 2^{d}$. 
    
    \item If $u,v\in P$ and they are copies of the same vertex in $D$, say $u,v\in A_{j}^{(0)}$ for some $j\in [d]$, then $A_{j}^{(1)}\subseteq N_{G}(u,v)$, and so $|N_{G}(u,v)|\ge 2^{d}$.
    \item If $u,v\in P$ but they are copies of distinct vertices in $D$. Suppose that $u\in A_{i}^{(x)}$ and $v\in A_{j}^{(y)}$ for distinct $i,j\in [d]$ and some $x,y\in\{0,1\}$, then $\{\boldsymbol{w}\in Q:w_{i}=x\textup{~and~}w_{j}=y\}\subseteq N_{G}(u,v)$, which implies that $|N_{G}(u,v)|\ge 2^{d-2}$.
    \end{Case}
    The proof of the claim is complete.
\end{poc}

Thus, $G$ is a triangle-free graph with $n=(2d+1)2^{d}$ vertices such that any non-adjacent pair of vertices has $2^{d-2}=\frac{n}{8d+4}\ge\varepsilon n$ common neighbors. Moreover, as $G$ is a blow-up of $H$, by~\cref{claim:Twin-free}, $G$ has no triangle-free homomorphic image of size smaller than $2^{d-1}\le 2^{\frac{1}{8\varepsilon}}$ as desired. 
\end{proof}

For larger $r\ge 4$, let us take an $(r-2)$-partite Tur\'an graph $T_{(r-2)n,r-2}$ and add a copy of $G$ into one of its partite set. It is easy to check that the resulting graph is $\alpha$-ultra maximal $K_r$-free, where $\alpha=\frac{\eps}{r^r}$ and has no $K_r$-free homomorphic image of size $2^{\frac{1}{8\eps}}=2^{\frac{1}{8\alpha r^r}}$.

Addressing~\cref{rmk:Matousek}, we note that the induced subgraph $G[Q]$ is an induced matching of size $2^{d-1}$ with parts say $L\cup R$. Then $L$ is shattered by $\C M(G)$ and so $\C M(G)$ has VC-dimension at least $2^{d-1}=2^{\Omega(\frac{1}{\eps})}$.

\subsection{Proof of~\cref{cor:GeneralThresholds}}
We first show that the minimum degree condition is a stronger condition than the ultra $K_r$-freeness condition. The converse is obviously not true by considering a blow-up of a large maximal $K_r$-free graph.
\begin{prop}\label{prop:MinDegreeToUltra}
   Let $\varepsilon>0$ and $G$ be an $n$-vertex maximal $K_{r}$-free graph with $\delta(G)\ge (\frac{2r-5}{2r-3}+\varepsilon)n$, then $G$ is $\varepsilon^{r-2}$-ultra maximal $K_{r}$-free.
\end{prop}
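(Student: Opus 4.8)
The plan is to show that the minimum degree hypothesis forces, for every non-adjacent pair $u,v$, the common neighborhood $N(u,v)$ to be both large and itself of large minimum degree relative to its own size, and then to iterate an Andr\'asfai--Erd\H{o}s--S\'os-type argument inside these co-neighborhoods to produce many copies of $K_{r-2}$. First I would record the size bound: since $uv\notin E(G)$, $|N(u)\cup N(v)|\le n$, so $|N(u,v)|=|N(u)|+|N(v)|-|N(u)\cup N(v)|\ge 2\delta(G)-n\ge\big(\frac{2r-7}{2r-3}+2\varepsilon\big)n$. More importantly, I would control the minimum degree of $G[N(u,v)]$: for any $w\in N(u,v)$, the set $N(w)\cap N(u,v)=N(u)\cap N(v)\cap N(w)$ has size at least $|N(u)|+|N(v)|+|N(w)|-2n\ge 3\delta(G)-2n$, and comparing this with $m:=|N(u,v)|\le 2\delta(G)-n$ gives that $G[N(u,v)]$ has minimum degree at least $3\delta(G)-2n$, which one checks is at least $\big(\frac{2(r-1)-5}{2(r-1)-3}+\varepsilon'\big)m$ for a suitable $\varepsilon'$ comparable to $\varepsilon$ (the key inequality being $\frac{2r-7}{2r-5}\cdot(2\delta(G)-n)\le 3\delta(G)-2n$ when $\delta(G)\ge\frac{2r-5}{2r-3}n$, with the $\varepsilon$-slack propagating). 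Crucially $G[N(u,v)]$ is $K_{r-2}$-free only if we are careful — actually it is $K_{r-1}$-free since $u$ (or $v$) extends any clique there, so within $N(u,v)$ we are in exactly the same situation one dimension down.

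The core of the argument is then a supersaturation / counting statement: an $m$-vertex graph that is $K_{s+1}$-free with minimum degree at least $\big(\frac{2s-3}{2s-1}+\eta\big)m$ contains at least $c(s,\eta)\,m^{s}$ copies of $K_{s}$, for some $c(s,\eta)>0$. I would prove this by induction on $s$. For the base case $s=2$, a graph with minimum degree $\ge\eta m$ (and at least one edge, guaranteed since $m\ge 1$ and the degree bound is positive) has at least $\eta m^2/2$ edges, giving the $m^2$ count. For the inductive step, pick any vertex $x$; its neighborhood $N(x)$ has size $\ge\big(\frac{2s-3}{2s-1}+\eta\big)m$, is $K_{s}$-free (as $x$ extends), and — repeating the co-degree computation above within $G$ — $G[N(x)]$ has minimum degree at least $\big(\frac{2s-5}{2s-3}+\eta'\big)|N(x)|$; by induction $G[N(x)]$ has $\ge c(s-1,\eta')|N(x)|^{s-1}$ copies of $K_{s-1}$, each forming a $K_{s}$ with $x$. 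Summing over all $x$ and dividing by $s$ (overcounting) yields $\ge c(s,\eta)m^{s}$ copies of $K_{s}$. Applying this with $s=r-2$ to the graph $G[N(u,v)]$ on $m=|N(u,v)|$ vertices, and using $m=\Omega_r(n)$, produces $\Omega_{r,\varepsilon}(n^{r-2})=\varepsilon^{r-2}n^{r-2}$ copies of $K_{r-2}$ after tracking that the constant works out to at least $\varepsilon^{r-2}$ (the statement is fairly lossy, so a crude bound suffices — note $c$ can be taken of the form $(\eta/2)^{s-1}$ roughly, and $\eta\asymp\varepsilon$, $m\asymp n$, so $c\cdot m^{r-2}/n^{r-2}$ is easily $\ge\varepsilon^{r-2}$ for the intended reading).

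The main obstacle is getting the minimum-degree fraction to decrease in exactly the right way under passing to a co-neighborhood, i.e. verifying that $3\delta-2n \ge \frac{2s-3}{2s-1}(2\delta-n)$ precisely when $\delta\ge\frac{2s-1}{2s+1}n$ (this is the chromatic-threshold fraction shifting from $s+1$ down to $s$), and doing the bookkeeping so that a multiplicative $+\varepsilon$ slack at the top survives all $r-3$ iterations as some $+\varepsilon'$ with $\varepsilon'\ge$ (an explicit function of $\varepsilon$ and $r$). This is elementary but needs care: each step roughly halves or otherwise shrinks the slack by a factor depending only on $r$, so after boundedly many steps the slack is still $\Omega_r(\varepsilon)$. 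A secondary point is the degenerate case where some co-neighborhood is small or empty — but the size bound $|N(u,v)|\ge\big(\frac{2r-7}{2r-3}+2\varepsilon\big)n>0$ rules this out, and for $r=3$ the statement is just $|N(u,v)|\ge\varepsilon n$, immediate from $|N(u,v)|\ge 2\delta(G)-n\ge(1/3+2\varepsilon)n$. I would also double-check the claim $|F|\le$ the stated size is not needed here — \cref{prop:MinDegreeToUltra} only asserts ultra $K_r$-freeness — so no further structural analysis is required beyond the counting.
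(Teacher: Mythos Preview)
Your argument has a genuine gap: you never use that $G$ is \emph{maximal} $K_r$-free, and without maximality the statement is false. For $r=4$, take the balanced complete $3$-partite graph and delete all edges between small subsets $A_0,B_0$ of two parts; this stays $K_4$-free with $\delta(G)\ge(\tfrac{3}{5}+\varepsilon)n$, yet for $u\in A_0$, $v\in B_0$ the set $N(u,v)$ is the third part, which is independent. So $G[N(u,v)]$ has zero edges and the ultra condition fails.

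Concretely, your ``key inequality'' $\frac{2r-7}{2r-5}(2\delta-n)\le 3\delta-2n$ rearranges to $\delta\ge\frac{2r-3}{2r-1}n$, which is strictly stronger than the hypothesis $\delta\ge\frac{2r-5}{2r-3}n$; and in any case $m=|N(u,v)|\ge 2\delta-n$ is a \emph{lower} bound, whereas to bound the relative minimum degree $\frac{3\delta-2n}{m}$ from below you would need an \emph{upper} bound on $m$. The only trivial one is $m\le n$, which is too weak. The same issue recurs in your inductive step: passing to $G[N(x)]$ gives minimum degree $\ge\frac{2s-5}{2s-1}m$, not the required $\ge\frac{2s-5}{2s-3}|N(x)|$.

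The paper's proof repairs exactly this by invoking maximality. Since $uv\notin E(G)$ and $G$ is maximal $K_r$-free, there is a clique $w_1,\ldots,w_{r-2}$ in $N(u,v)$; because $\{u,w_1,\ldots,w_{r-2}\}$ and $\{v,w_1,\ldots,w_{r-2}\}$ are $K_{r-1}$'s, one gets $N(w_1,\ldots,w_{r-2})\cap(N(u)\cup N(v))=\varnothing$, which yields the much stronger bound $|N(u,v)|\ge r\delta-(r-2)n$ (this is the Oberkampf--Schacht estimate the paper cites). With this in hand the paper does not set up any induction on $s$ at all: it simply checks that for any $z_1,\ldots,z_{i-1}$ already chosen one has $|N(\{u,v,z_1,\ldots,z_{i-1}\})|\ge\varepsilon n$, and then greedily picks $z_1,\ldots,z_{r-2}$ with at least $\varepsilon n$ choices each, giving $(\varepsilon n)^{r-2}$ ordered cliques in $N(u,v)$.
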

\begin{proof}
    Let $S\subseteq V$ be a set of vertices and $N(S)$ be the set of common neighbors of vertices in $S$. Note that for every $S\subseteq V(G)$, $|N(S)|\ge |S|\cdot\delta(G)-(|S|-1)n$.
    Let $u,v$ be a non-adjacent pair of vertices in $G$. Then for arbitrary $r-3$ vertices $w_{1},w_{2},\ldots,w_{r-3}\in V(G)\setminus\{u,v\}$, we have
    \begin{equation*}
        |N(\{w_{1},w_{2},\ldots,w_{r-3}\})|\ge (r-3)\delta(G)-(r-4)n\ge \bigg(\frac{3}{2r-3}+(r-3)\varepsilon\bigg)n.
    \end{equation*}
Moreover, it was proved in~\cite[Proposition~2.1]{2020CPCProb} that $N(u,v)\ge r\delta(G)-(r-2)n\ge (\frac{2r-6}{2r-3}+r\varepsilon)n$.

    Therefore, the common neighborhood of $u,v$ and $w_{1},w_{2},\ldots,w_{r-3}$ satisfies
    \begin{equation*}
        |N(\{u,v,w_{1},w_{2},\ldots,w_{r-3}\})|\ge |N(\{w_{1},w_{2},\ldots,w_{r-3}\})|-(n-N(u,v))\ge (2r-3)\varepsilon n\ge \eps n.
    \end{equation*}
    Therefore, for each fixed non-adjacent pair of vertices $u,v$, we can greedily pick $r-2$ vertices $z_{1},z_{2},\ldots,z_{r-2}$, where $z_i$ lies in the common neighborhood of $\{u,v,z_1,\ldots,z_{i-1}\}$ and the number of choices of each $z_{i}$ is at least $\varepsilon n$. Thus, $G$ is $\varepsilon^{r-2}$-ultra maximal $K_{r}$-free.
\end{proof}

\begin{prop}\label{prop:codeg-edge}
    For an $n$-vertex graph $G$, if $\delta^{(2)}(G)\ge cn$, then $\hat{\delta}^{(2,2)}(G)\ge 2-1/c$.
\end{prop}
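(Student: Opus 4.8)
The plan is to fix an arbitrary non-adjacent pair $\{u,v\}$ of $G$, put $W=N(u)\cap N(v)$, $w=|W|$ and $H=G[W]$, and show that the edge density $e(H)/\binom w2=k_2(G[N(\{u,v\})])/\binom{|N(\{u,v\})|}{2}$ is at least $2-1/c$; since $\{u,v\}$ is an arbitrary independent pair, this is exactly the assertion. If $c\le\tfrac12$ then $2-1/c\le 0$ and there is nothing to prove, so assume $c>\tfrac12$; note $w\ge\delta^{(2)}(G)\ge cn$ (in particular $w\ge 2$), and set $\gamma:=w-(1-c)n$, which is positive since $\gamma\ge cn-(1-c)n=(2c-1)n$. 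Two facts drive the argument. First, every non-edge $\{y,z\}$ of $H$ has at least $\gamma$ common neighbours inside $H$: as $\{y,z\}$ is an independent pair of $G$ it has at least $cn$ common neighbours in $G$, of which at most $n-w$ can lie outside $W$, so $|N_H(y)\cap N_H(z)|\ge cn-(n-w)=\gamma$. Second, $\gamma/w=1-(1-c)n/w\ge 1-(1-c)/c=2-1/c$, using $cn\le w$ and $1-c\ge0$. Hence it suffices to prove that the edge density of $H$ is at least $\gamma/w$.

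I would then pass to the complement $\overline H$ and bound $e(\overline H)$. The key local estimate is that every edge $\{y,z\}$ of $\overline H$ satisfies $d_{\overline H}(y)+d_{\overline H}(z)\le 2(w-1-\gamma)$. Indeed, by inclusion--exclusion this sum equals $|N_{\overline H}(y)\cup N_{\overline H}(z)|+|N_{\overline H}(y)\cap N_{\overline H}(z)|$; the union misses the $\ge\gamma$ common $H$-neighbours of $y$ and $z$ (within $W$), so it has size at most $w-\gamma$, while the intersection is contained in that union and avoids both $y$ and $z$, so it has size at most $w-\gamma-2$. Adding gives the claimed bound.

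Summing the local bound over all edges of $\overline H$ yields $\sum_{v\in W}d_{\overline H}(v)^2=\sum_{\{y,z\}\in E(\overline H)}\bigl(d_{\overline H}(y)+d_{\overline H}(z)\bigr)\le 2(w-1-\gamma)\,e(\overline H)$, while Cauchy--Schwarz (equivalently convexity of $x\mapsto x^2$) gives $\sum_{v\in W}d_{\overline H}(v)^2\ge \tfrac1w\bigl(\sum_{v}d_{\overline H}(v)\bigr)^2=\tfrac{4e(\overline H)^2}{w}$. Comparing and cancelling $e(\overline H)$ — the case $e(\overline H)=0$, i.e. $H$ complete, being trivial — gives $e(\overline H)\le \tfrac{w(w-1-\gamma)}{2}$, hence $e(H)=\binom w2-e(\overline H)\ge \tfrac{w\gamma}{2}$. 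Therefore the edge density of $H$ is at least $\tfrac{w\gamma/2}{\binom w2}=\tfrac{\gamma}{w-1}\ge\tfrac{\gamma}{w}\ge 2-1/c$, which is what we want, and as $\{u,v\}$ was arbitrary we conclude $\hat\delta^{(2,2)}(G)\ge 2-1/c$.

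I expect the only genuine subtlety to be the passage from the per-edge degree bound to a bound on $e(\overline H)$ with the correct constant: the cheap alternative — bounding the maximum degree of $\overline H$ by $2(w-1-\gamma)$ and using $e(\overline H)\le w\Delta(\overline H)/2$ — loses a factor of $2$ and does not reach $2-1/c$, so one really needs the edge-sum identity together with Cauchy--Schwarz. Everything else is bookkeeping. (The estimate is essentially tight: for $H=K_w$ minus a perfect matching one has $\gamma=w-2$ and all the inequalities above hold with equality.)
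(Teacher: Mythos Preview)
Your proof is correct, but it takes a more circuitous route than the paper's. The paper observes that the hypothesis $\delta^{(2)}(G)\ge cn$ already forces $\delta(G)\ge cn$: any vertex $x$ with a non-neighbour $y$ satisfies $d_G(x)\ge |N(x)\cap N(y)|\ge cn$, and a vertex adjacent to everything has degree $n-1\ge cn$. Given this, for every $x\in W$ one gets directly
\[
d_H(x)=|N_G(x)\cap W|\ge d_G(x)+|W|-n\ge cn+w-n=\gamma,
\]
so $\delta(H)\ge\gamma$, i.e.\ $\Delta(\overline H)\le w-1-\gamma$, and hence $e(\overline H)\le \tfrac{w(w-1-\gamma)}{2}$ immediately---the very bound you extract only after the edge-sum identity and Cauchy--Schwarz. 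In other words, the ``cheap alternative'' you dismiss actually works with the right constant, once one uses the minimum-degree bound in $G$ rather than the co-degree bound inside $H$; the factor of~$2$ you had to fight appeared only because your local estimate started from the weaker information $|N_H(y)\cap N_H(z)|\ge\gamma$ for non-edges, not from $\delta(H)\ge\gamma$. Your argument does have the mild virtue of using only the codegree condition restricted to $W$, but that extra generality is not needed here and costs you the Cauchy--Schwarz step.
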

\begin{proof}
   Pick arbitrary $uv\notin E(G)$ with $d(u,v)=\delta^{(2)}(G)\ge cn$. Let $G'=G[N(u,v)]$ and $n'=|G'|\ge cn$. It suffices to show that $\delta(G')\ge (2-1/c)n'$. Let $w\in V(G')$, then 
   $$d_{G'}(w)\ge (\delta(G)-2)+n'-(n-2)\ge \delta^{(2)}(G)+n'-n= 2n'-n.$$
   Thus, $\frac{\delta(G')}{n'}\ge \frac{2n'-n}{n'}\ge 2-1/c$ as desired.
\end{proof}

\begin{proof}[Proof of~\cref{cor:GeneralThresholds}]
For the lower bound, take the $(r-2)$-partite Tur\'an graph and add to one of its parts a triangle-free graph with large chromatic number, say $k$. Let the resulting graph be $H$. Obviously $H$ has no homomorphic image of size at most $k$ and one can easily check that $\delta^{(a)}(H)=\frac{r-3}{r-2}\cdot n$ and $\hat{\delta}^{(a,b)}(H)=\pi_b(K_{r-2})$, proving the desired lower bound.

For the upper bound, as the condition gets stronger when $a$ increases, we may assume that $a=2$. We first prove $\delta^{(2,b)}_{\textup{hom}}(K_r)\le \pi_b(K_{r-2})$ for each $2\le b\le r-2$. Let $G$ be an $n$-vertex $K_r$-free graph with $\delta^{(2)}(G)=\Omega(n)$ and $\hat{\delta}^{(a,b)}(G)\ge \pi_b(K_{r-2})+\eps$. For any pair of non-adjacent vertices $u,v$, as $G[N(u,v)]$ has $K_b$-density larger than $\pi_b(K_{r-2})$, by a supersaturation version of Erd\H{o}s's result~\cite{1962ErdosPi}, $G[N(u,v)]$ has positive $K_{r-2}$-density. Since $|N(u,v)|\ge \delta^{(2)}(G)=\Omega(n)$, we get that $G$ is $\Omega(1)$-ultra maximal $K_r$-free and the conclusion follows from~\cref{thm:main1} applies.

It remains to prove $\delta^{(2)}_{\textup{hom}}(K_r)\le \frac{r-3}{r-2}$. If an $n$-vertex graph $G$ has $\delta^{(2)}(G)\ge (\frac{r-3}{r-2}+\eps)n$, then by~\cref{prop:codeg-edge}, $\hat{\delta}^{(2,2)}(G)\ge \frac{r-4}{r-3}+\eps>\pi_2(K_{r-2})$. The result then follows from the upper bound we proved: $\delta^{(2,2)}_{\textup{hom}}(K_r)\le \pi_2(K_{r-2})$.
\end{proof}

\section{Concluding remarks}\label{sec:ConcludingRemarks}

\subsection{Thresholds with bounded VC-dimension}\label{sec:VCThreshold}
Based on the proof of~\cref{thm:main1}, we give a new short proof of the following result of {\L}uczak and Thomass{\'e}~\cite{2010ColoringViaVCDim}.
\begin{theorem}\label{thm:VCToBoundedChromaticNumber}
  Let $c>0$ and $d\in\I{N}$. Let $G$ be an $n$-vertex triangle-free graph with VC-dimension $d$ and minimum degree $\delta(G)\ge cn$, then $\chi(G)\le e(d+1)\big(\frac{2e}{3c}\big)^{d}$.
\end{theorem}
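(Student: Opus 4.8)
The plan is to give a one-paragraph argument in the same spirit as the proof of \cref{thm:main1}, with triangle-freeness and the minimum degree playing the roles that $K_r$-freeness and the $\eps$-ultra condition played there, and with Haussler's packing lemma (\cref{lemma:Packing}) as essentially the only tool. Let $\mathcal{F}:=\{N_G(v):v\in V(G)\}$, a set system on the ground set $V(G)$; by definition it has $\VC(\mathcal{F})=d$, and trivially $|\mathcal{F}|\le n$. The single structural input is a \emph{separation--adjacency dictionary}: since $G$ is triangle-free, any edge $uv\in E(G)$ has $N_G(u)\cap N_G(v)=\varnothing$, so $|N_G(u)\triangle N_G(v)|=d_G(u)+d_G(v)\ge 2\delta(G)\ge 2cn$. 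In other words, two vertices whose neighbourhoods are close in symmetric difference cannot be adjacent.

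First I would fix a separation parameter $s$ just below $\delta(G)$ and take a maximal $s$-separated subfamily $\mathcal{X}=\{N_G(v_1),\dots,N_G(v_m)\}\subseteq\mathcal{F}$. By \cref{lemma:Packing} with $\VC(\mathcal{F})=d$ and $|\mathcal{F}|\le n$ we obtain $m\le e(d+1)\big(\tfrac{2en}{s}\big)^{d}$, which for a suitable choice of $s\asymp\delta(G)$ is at most $e(d+1)\big(\tfrac{2e}{3c}\big)^{d}$. Then I would partition $V(G)=V_1\cup\dots\cup V_m$ greedily: assign each $v$ to $V_i$ for the smallest $i$ with $|N_G(v)\triangle N_G(v_i)|\le s$, which exists by maximality of $\mathcal{X}$ (otherwise $\mathcal{X}\cup\{N_G(v)\}$ would still be $s$-separated). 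By the triangle inequality for symmetric difference, any $u,v$ in the same part satisfy $|N_G(u)\triangle N_G(v)|\le 2s<2\delta(G)$, so the dictionary above forces $uv\notin E(G)$. Hence each $V_i$ is independent, the partition is a proper colouring, and $\chi(G)\le m\le e(d+1)\big(\tfrac{2e}{3c}\big)^{d}$.

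The step I expect to be the main hurdle is pinning down the constant rather than the structure: the greedy partition forces $s<\delta(G)$ (otherwise the parts need not be independent), while the packing bound wants $s$ as large as possible, and making these two demands meet precisely at $\tfrac{2e}{3c}$ takes some care about bookkeeping — e.g.\ whether one works with an $(s+1)$-separated family and radius-$s$ assignment, and how the within-part diameter compares to the covering radius coming from maximality. Everything else (well-definedness of the greedy step, independence of the parts) is immediate from triangle-freeness and the minimum degree hypothesis, which is exactly why the argument is short; and no case split on $d$ is needed, since \cref{lemma:Packing} applies for every $d$ and for the trivial ranges the stated bound is anyway enormous.
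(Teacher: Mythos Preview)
Your proposal is correct and follows essentially the same argument as the paper: take $\mathcal{F}=\{N_G(v):v\in V(G)\}$, apply Haussler's packing lemma with $s$ a fixed fraction of $cn$ to bound a maximal $s$-separated subfamily, greedily partition vertices into parts of small symmetric-difference diameter, and use triangle-freeness to conclude each part is independent. The paper resolves your ``main hurdle'' simply by setting $s=\tfrac{c|\mathcal{F}|}{3}$ and phrasing the independence step as ``any two vertices in the same part have at least $cn-2s\ge cn/3$ common neighbours'', which is the contrapositive of your adjacency--separation dictionary.
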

\begin{proof}
    Let $\C F=\{N_{G}(v):v\in V(G)\}$. Applying~\cref{lemma:Packing} with $s=\frac{c|\mathcal{F}|}{3}$, we get a partition $V(G)=V_{1}\cup V_{2}\cup\cdots\cup V_{m}$ with $m\le e(d+1)\big(\frac{2e}{3c}\big)^{d}$ such that for any pair of vertices $u,v$ from the same part $V_{i}$, $|N_{G}(u)\triangle N_{G}(v)|\le 2s$. This implies that $u,v$ have at least $cn-2s\ge\frac{cn}{3}$ common neighbors and thus $uv\notin E(G)$ as $G$ is triangle-free. Hence, each $V_{i}$ is an independent set and $\chi(G)\le m$.
\end{proof}

Motivated by~\cref{constr:VC-but-no-hom}, we can define the \Yemph{Bounded-VC chromatic/homomorphism thresholds} for graph $H$ as follows:
\begin{align*}
    \delta_{\chi}^{\textup{VC}}(H) :=  \inf\big\{\alpha\ge 0: &~\forall d\in\I N, \exists~C=C(\alpha,H,d)\ 
     \textup{s.t.~} \forall~n\textup{-vertex\ }H\textup{-free}~G,\\&~\VC(G)\le d,~\delta(G)\ge \alpha n\Rightarrow \chi(G)\le C \big\},
\end{align*}
and
\begin{align*}
    \delta_{\textup{hom}}^{\textup{VC}}(H) :=  \inf\big\{\alpha\ge 0: &~\forall d\in\I N, \exists~H\textup{-free}~\textup{graph}~F=F(\alpha,H,d)\ 
     \textup{s.t.~} \forall~n\textup{-vertex\ }H\textup{-free}~G,\\&~\VC(G)\le d,~\delta(G)\ge \alpha n\Rightarrow G\xrightarrow{\textup{hom}} F  \big\}.
\end{align*}
Then~\cref{thm:VCToBoundedChromaticNumber} implies that $\delta_{\chi}^{\textup{VC}}(K_3)=0$, while~\cref{thm:VC-notfor-hom} implies that $\delta_{\textup{hom}}^{\textup{VC}}(K_{3})\ge\frac{1}{4}$. We have later noticed that using the Andr\'asfai graph one can get the optimal bound $\delta_{\textup{hom}}^{\textup{VC}}(K_{3})\ge\frac{1}{3}$. By adding the Andr\'asfai graph into one part of a complete $(r-2)$-partite graph, we can obtain that $\delta^{\textup{VC}}_{\textup{hom}}(K_{r})\ge \frac{2r-5}{2r-3}$. The following summarizes what we know:
$$\delta_{\textup{hom}}^{(2,b)}(K_{r})<\cdots <\delta_{\textup{hom}}^{(2,2)}(K_{r})< \delta_{\textup{hom}}^{(2)}(K_{r})=\frac{r-3}{r-2}=\delta_{\chi}^{\textup{VC}}(K_{r})<\delta_{\textup{hom}}^{\textup{VC}}(K_{r})=\frac{2r-5}{2r-3}.$$
It would be interesting to determine $\delta^{\textup{VC}}_{\textup{hom}}(H)$ for general graph $H$.

\subsection{Related results on large blow-ups}
A well-known result of Nikiforov~\cite{2008BLMSNikiforov} states that any graph with positive $K_r$-density contains a copy of $\Omega(\log{n})$-size blow-up of $K_r$. Considering random graphs, the logarithmic order is best possible. Some recent work studies natural conditions that guarantee much larger blow-ups, see e.g.~\cite{2019EuJCPachTomon}. \cref{thm:main1} can also be viewed as a result of this type, implying that $\eps$-ultra maximal $K_{r+1}$-freeness guarantees a linear size $\Omega(n)$, the largest possible, blow-up of $K_{r}$. 

\begin{cor}
    Let $G$ be an $\eps$-ultra maximal $K_{r+1}$-free graph, then $G$ contains a copy of $K_{r}[Cn]$, where $C=C(r,\varepsilon)$.
\end{cor}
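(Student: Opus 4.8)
The plan is to deduce this directly from~\cref{thm:main1} together with a short counting argument. By~\cref{thm:main1} applied with clique number $r+1$, since $G$ is $\eps$-ultra maximal $K_{r+1}$-free we have $G=F[\cdot]$ for some maximal $K_{r+1}$-free graph $F$ on at most $N:=2^{O((\frac{1}{\eps}+r)\log\frac{1}{\eps})}$ vertices; write the blow-up parts as $\{B_f:f\in V(F)\}$, each an independent set satisfying $N_G(x)=\bigcup_{f'\in N_F(f)}B_{f'}$ for every $x\in B_f$. Since $|V(G)|=n$ and $|V(F)|\le N$, call a part $B_f$ \emph{heavy} if $|B_f|\ge\theta:=\frac{\eps n}{2N}$ and \emph{light} otherwise; the light parts together contain fewer than $N\theta=\frac{\eps n}{2}$ vertices, so (for $n$ sufficiently large) heavy parts exist and every heavy part has size at least $2$.

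Next I would pick two distinct vertices $u,v$ in a common heavy part $B_f$. They are non-adjacent, so by the ultra maximality hypothesis $G[N(u)\cap N(v)]=G[N(B_f)]$ contains at least $\eps n^{(r+1)-2}=\eps n^{r-1}$ copies of $K_{r-1}$. On the other hand, the number of copies of $K_{r-1}$ in $G$ using at least one vertex that lies in a light part is at most $(\text{number of light vertices})\cdot n^{r-2}<\tfrac{\eps n}{2}\cdot n^{r-2}=\tfrac{\eps n^{r-1}}{2}$. Hence some copy of $K_{r-1}$ inside $G[N(B_f)]$ has all its $r-1$ vertices in heavy parts $B_{f_1},\dots,B_{f_{r-1}}$. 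Since a clique of $G=F[\cdot]$ meets each part in at most one vertex and distinct parts appearing in a clique are pairwise complete, the vertices $f_1,\dots,f_{r-1}$ are pairwise adjacent in $F$, and each $f_i\in N_F(f)$ because $B_{f_i}\subseteq N(B_f)$; thus $\{f,f_1,\dots,f_{r-1}\}$ spans a $K_r$ in $F$. The corresponding parts $B_f,B_{f_1},\dots,B_{f_{r-1}}$ are all heavy, so $G$ contains $K_r[\lceil\theta\rceil]\supseteq K_r[Cn]$ with $C=\frac{\eps}{2N}=2^{-O((\frac{1}{\eps}+r)\log\frac{1}{\eps})}=C(r,\eps)$.

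I do not expect a genuine obstacle; the only point requiring care is that one cannot simply invoke ``$F$ contains $K_r$'' (which a maximal $K_{r+1}$-free $F$ does), because the parts of an arbitrary $K_r$ in $F$ may be tiny --- indeed the lower-bound construction in~\cref{thm:main1-lowerbound} has parts of size $1$. The counting in the second paragraph is precisely what forces the $K_r$ we produce to sit on heavy, hence linear-sized, parts. A minor bookkeeping matter is to fix the heavy/light threshold $\theta$ so that the light vertices number fewer than $\eps n$, for which $\theta=\frac{\eps n}{2N}$ suffices.
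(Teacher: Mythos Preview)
Your argument is correct. The paper states this corollary without proof in the concluding remarks, merely noting that it follows from \cref{thm:main1}, so there is nothing to compare against; your derivation via \cref{thm:main1} together with the light-vertex counting is exactly the natural way to fill in the details, and your observation that one cannot simply take an arbitrary $K_r$ in $F$ (because its parts might be tiny) is the only non-automatic point. One cosmetic remark: when you write $K_r[\lceil\theta\rceil]$ you are using that a heavy part has integer size at least $\theta$, hence at least $\lceil\theta\rceil$; this is fine, but you might as well set $C=\lfloor\theta\rfloor/n$ to avoid the reader pausing.
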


Our results are also related to an old result of Pach~\cite{1981PachStructure}, where he studied the property $I_k$ which is equivalent to $\delta^{(k)}(G)\ge 1$. Answering a question of Erd\H{o}s and Fajtlowics, Pach~\cite{1981PachStructure} proved that a triangle-free graph has the property that every of its independent set has a common neighbor if and only if it is a blow-up of the Cayley graph on $\I Z/(3k-1)\I Z$ generated by a maximum size sum-free set. Brandt~\cite{2002Brandt} later extended Pach's result, showing that a triangle-free graph $G$ having this property is also equivalent to the property of not containing any induced $C_{6}$.

\subsection{Refining partitions from regularity lemma}
As ultra maximal $K_r$-free graphs have bounded VC-dimension, by a stronger version of regularity lemma~\cite{2019DCGVC}, it admits an almost homogeneous partition, i.e.~the density between most of of pairs are either $o(1)$ or $1-o(1)$. By~\cref{thm:main1}, we know that such graphs actually have a homogeneous partition. It is an interesting problem on its own to find natural sufficient conditions that would allow one to boost an almost homogeneous partition to a homogeneous one. We refer the interested readers to an alternative proof of~\cref{thm:main1} in~\cite{2024Second}, where we provide one such example.

By Lemma~\ref{lemma:NoHalfGraph}, ultra maximal $K_{r}$-free graph does not have a large half graph as subgraph. This is also related to a \Yemph{stable regularity lemma} due to Malliaris and Shelah~\cite{2014TAMSStable} where they study graphs without a variation of the half graph considered here. We refer the interested readers to the recent note~\cite{2021NoteSatble}.

\section*{Acknowledgement}
The first author would like to thank Andreas Holmsen for many stimulating conversations. The authors would like to thank Yuval Wigderson for drawing attention to~\cite{2011Unpubilished}. The authors would like to thank Mingyuan Rong for pointing out that the construction of Andr\'{a}sfai graphs which provides the sharp lower bound for bounded VC-dimension homomorphism thresholds of cliques in~\cref{sec:VCThreshold}.


\bibliographystyle{amsplain}
\bibliography{CodegHomoThres}

\end{document}